\newtheorem{theorem}{Theorem}[section]
\newtheorem{proposition}[theorem]{Proposition}
\newtheorem{lemma}[theorem]{Lemma}
\def\A{{\tilde{A}}}
\def\C{{\mathcal C}}
\def\D{{\mathcal D}}
\def \O{{\Omega}}
\def\I{{\mathcal I}}
\def\M{{\mathcal M}}
\def\e {{\varepsilon}}
\def\le {|\ln {\varepsilon}|}
\newcommand{\RE}{\text{Re}}
\renewcommand{\vec}[1]{\text{\boldmath $#1$}}
\def\dist{{\rm dist\,}}
\def\curl{{\rm curl\,}}
\def\supp{{\rm supp\,}}
\def \p {\partial}
\def \J {{\mathcal J}}
\newcommand{\R}{\mathbb{R}}
\DeclareMathOperator \tr{\rm {tr}}
\DeclareMathOperator \dive{\rm {div}}
\numberwithin{equation}{section}
\title[]{Ginzburg-Landau minimizers with high topological degrees in an annulus}
\date{\today}
\author{Amandine Aftalion}
\address[A.Aftalion]{Universit\'e Paris-Saclay, CNRS,  Laboratoire de math\'ematiques d'Orsay, 91405, Orsay, France}
\email{ amandine.aftalion@math.cnrs.fr}
\author{Rémy Rodiac}
\address[R. Rodiac]{Institute of Mathematics, University of Warsaw,
Banacha 2, 02-097 \\
Warszawa, Poland \(\&\) Laboratoire J.A. Dieudonn\'e, Universit\'e C\^ote d'Azur, CNRS UNMR 7351,06108, Nice, France.}
\email{ rrodiac@mimuw.edu.pl, remy.rodiac@univcotedazur.fr}
\begin{document}

\begin{abstract}
Motivated by recent experiments on fermionic rings, we study the asymptotic behaviour of minimizers of the Ginzburg-Landau (GL) energy in an annulus with a Dirichlet data which depends on the GL parameter on the outer boundary.  We show that there is a critical degree of order $|\ln \e|$  under which the ground state displays a giant vortex and above which minimizers exhibit a combination of a giant vortex and vortices which tend to the outer boundary as the GL parameter tends to zero. Our analysis relies on the construction of suitable upper and lower bounds, on the extension to a slightly bigger annulus and on the minimization of the mean-field energy appearing in the lower bound. In order to be able to derive the minimum of this energy we use the symmetry of the domain and criticality with respect to inner variations.
\end{abstract}

\keywords{Ginzburg-Landau equations, vortices, mean-field limit, inner variations}

\subjclass[2020]{35Q56, 49S05}
\maketitle

% 49K20 optimality conditions for problems involving PDEs
%4905 variational principles of physics
%35Q56 Ginzburg-Landau equations for optics and electromagnetic theory
\maketitle

\section{Introduction}

\subsection{Motivation}

 Persistent currents in annular geometries have attracted a lot of attention both for fundamental and experimental reasons. Moreover, they are promising for quantum technologies. Indeed, quantum gases in annuli provide
 unique opportunities to explore quantum many body physics \cite{persistent2024}. Long lived supercurrents have been observed and studied in superfluids (starting with Helium \cite{fetter67}), weakly interacting Bose-Einstein condensates \cite{perrin,zoran} and very recently in fermionic superfluid rings \cite{cai2022,Roati}. These fermionic rings are in a regime of molecular condensate described by the Gross-Pitaevskii equation.

 The geometry of the annulus can also be the source of mathematical questions and, for example, the issue of understanding the behaviour of minimizers of the Ginzburg-Landau (GL) energy in an annulus has been stated as an open problem in the seminal book by  Bethuel-Brezis-H\'elein  ``Ginzburg-Landau vortices", cf.\ \cite[Problem 3, Chap. XI]{BBH}. More precisely, if a boundary data with degree $d_{\text{out}}$ on the outer boundary and with degree \(d_{\text{inn}}\) on the inner boundary is prescribed, the
 issue is to describe minimizers of the GL energy, the location of their zeroes around which the winding-number does not vanish, and the renormalized energy. Some mathematical works allow to give partial answers as we will describe below. 
  
  In this article, in addition to the difficulty of dealing with an annular geometry we consider a high exterior degree, as $\e$, a small parameter, tends to zero. The situation is the following: the Dirichlet data is prescribed only on the outer boundary and its degree grows as \(\e\) tends to zero. Our motivation comes from very recent experiments on fermionic rings, namely the production of persistent currents in annular geometries \cite{cai2022,Roati}. The ring is realized thanks to a tight confinement. Experimentally, a phase of $2\pi d$  is imprinted, which can be  modelled mathematically by a prescribed boundary Dirichlet data $e^{id\theta}$. The specificity of the experiment is that for intermediate values of $d$, vortices are nucleated in the bulk, while for lower values of $d$, only the giant vortex is present. An experimental question raised by \cite{cai2022,Roati} is to understand the critical degree where vortices start to appear in the bulk. In the experiments, $\e$ is not so small that $|\ln \e|$ is large, so that  nucleation occurs for a degree 8. Mathematically, it is easier to deal with the case where $\e$ is small. It is quite surprising that so recent experiments can be described by a simple mathematical formulation where the mechanism of creation of vortices is due to an interaction between the geometry of the domain and the high degree of the Dirichlet boundary data on the exterior boundary. We prove in this paper that there is a critical degree of order $|\ln \e|$  under which the minimizer has a giant vortex and above which vortices also appear in the bulk and go to the outer boundary as $\e$ tends to zero.

 \subsection{Main results}

Motivated by recent experiments in physics \cite{Roati}, we consider a circular annulus \(A=B_{R_2}(0) \setminus \overline{B}_{R_1}(0)\) with \( 0<R_1<R_2\). For small \(\e\), we are interested in the behaviour of minimizers of
\begin{equation}\label{eq:GL_energy}
E_\e(u)=\frac12 \int_A |\nabla u|^2+\frac{1}{4\e^2}\int_A (1-|u|^2)^2
\end{equation}
in a space of maps with degree \(d_\e\) on the outer boundary. A first natural choice of minimization space could be \(\{v\in H^1(A,\mathbb{C}); |v|=1 \text{ on }  \p A \text{ and } \deg (v, \p B_{R_2})=d_\e\}\). However existence and non-existence of minimizers of the GL energy in such a space were studied e.g.\ in \cite{Berlyand_Golovaty_Rybalko_2006,berlmiro,DosSantos_Rodiac_2016,Farina_Mironescu_2013,Golovaty_Berlyand_2002,
Mironescu_2013} and it was shown that existence is not always guaranteed. It depends on the ``size'' of annular domains, or more precisely their capacity \cite{berlmiro}, as do the uniqueness and symmetry of minimizers \cite{Farina_Mironescu_2013,Golovaty_Berlyand_2002}. This is why we work with the minimization space:
\begin{equation*}
\mathcal{I}_\e=\{v\in H^1(A,\mathbb{C}); v=e^{i d_\e\theta} \text{ on } \p B_{R_2}, \ |v|=1 \text{ on } \p B_{R_1}\}
\end{equation*}
where the boundary data  has a topological degree \(d_\e\in \mathbb{N}^*\).
 The direct method in the calculus of variations can be used to prove the existence of minimizers of \(E_\e\) in \(\I_\e\). Furthermore, we can check that such a minimizer satisfies the following Euler-Lagrange equations:
\begin{equation}\label{eq:Euler_Lagrange}
\left\{
\begin{array}{rcll}
-\Delta u_\e &=& \frac{u_\e}{\e^2}(1-|u_\e|^2) & \text{ in } A \\
u_\e &=&e^{i d_\e\theta}  & \text{ on }  \p B_{R_2} \\
|u_\e|&=&1 \text{ and } u_\e\wedge \p_\nu u_\e =0 & \text{ on } \p B_{R_1},
\end{array}
\right.
\end{equation}
 where \(\nu\) denotes the unit outward normal to \( \p A\) and \(a\wedge b=\frac12 (\bar{a}b-a\bar{b})\) for \(a,b\) in \(\mathbb{C}\).

 We aim at describing the location of vortices of minimizers of \(E_\e\) in \(\mathcal{I}_\e\) in \(A\) in the limit \(\e\to 0\) with the degree \(d_\e\) going to \(+\infty\). In order to describe this situation we use the current vector and the Jacobian vorticity measure of a family of minimizers \((u_\e)_{\e>0}\). They are defined by
\begin{equation}\label{def:current_vorticity}
j_\e:=u_\e\wedge \nabla u_\e=(u_\e\wedge \p_x u_\e,u_\e\wedge \p_y u_\e) ,\quad \mu_\e:= 2\p_x u_\e \wedge \p_y u_\e=\curl j_\e.
\end{equation}
 Our first result shows that there exists a critical degree under which, for \(\e\) small enough, minimizers of \(E_\e\) in \(\I_\e\) are vortex-less and resemble a giant vortex of degree \(d_\e\).
 
 \begin{theorem}\label{th:uniqueness_d_small}
 Let \( (u_\e)_\e\) be a family of minimizers of \(E_\e\) in \({\I}_\e\). Let 
 \begin{equation}\label{eq:critical_alpha}
 \alpha_c:=\frac{1}{2\ln \left(R_2/R_1 \right)}.
 \end{equation}
%  Let \(\eta_\e\) be the minimizer of the energy \eqref{eq:energy_eta} among real-valued functions in \(H^1( (R_1,R_2))\) satisfying \(\eta(R_1)=\eta(R_2)=1\). 
  We recall that  \(\vec{e_\theta}=(-\sin \theta,\cos \theta)\).
  \begin{itemize}
  \item[1)]Assume that \(d_\e< \alpha_c |\ln \e|\). Then 
 \begin{equation}
 \frac{u_\e}{e^{id_\e\theta}} \rightarrow 1, \text{ in } \C^0_{\text{loc}}(\overline{B}_{R_2}\setminus \overline{B}_{R_1}).
 \end{equation}
 
 \item[2)] Assume that \(d_\e/|\ln \e| \to \alpha \) for some \(\alpha \in \R^+\) with \(\alpha\leq \alpha_c\). Then \(\lim_{\e\to 0} \frac{E_\e(u_\e)}{|\ln \e|^2}=\pi \alpha^2 \ln \frac{R_2}{R_1}\),
\begin{equation*}
\frac{j_\e}{|\ln \e|}\rightharpoonup \frac{\alpha \vec{e_\theta}}{r} \text{ in } L^2(A)  \text{ and }  \frac{\mu_\e}{|\ln \e|} \rightharpoonup 0 \text{ in } (\C^{0,\gamma}_c(\overline{B}_{R_2}\setminus \overline{B}_{R_1}))^* \text{ for all } \gamma \in (0,1).
\end{equation*}

  \end{itemize}
 
% Furthermore there exists \(C>0\) such that \(\|\eta_\e-1\|_{L^\infty(A)}\leq C(d_\e \e)^2\).
  \end{theorem}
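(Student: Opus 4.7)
\emph{Upper bound.} My plan is to use $u = e^{id_\e\theta}\in\I_\e$ as a competitor. Since $|u|\equiv 1$, the potential vanishes, and a direct polar-coordinate computation gives
\begin{equation*}
E_\e(e^{id_\e\theta}) = \frac{1}{2}\int_A \frac{d_\e^2}{r^2}\,dx = \pi d_\e^2 \ln(R_2/R_1).
\end{equation*}
By minimality of $u_\e$ and the hypothesis $d_\e/|\ln\e|\to\alpha$, this yields $\limsup_{\e\to 0} E_\e(u_\e)/|\ln\e|^2 \leq \pi\alpha^2\ln(R_2/R_1)$.

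\emph{Lower bound via mean-field analysis.} The maximum principle applied to \eqref{eq:Euler_Lagrange} gives $|u_\e|\leq 1$ in $A$. Combined with the pointwise inequality $|j_\e|^2\leq |u_\e|^2|\nabla u_\e|^2$ this yields $E_\e(u_\e)\geq \frac{1}{2}\int_A|j_\e|^2$, so $j_\e/|\ln\e|$ is bounded in $L^2(A)$ and admits a weak subsequential limit $j_*$. Since $\divv(j_\e) = u_\e\wedge\Delta u_\e = 0$ and $j_\e\cdot\vec{e_\theta} = d_\e/R_2$ on $\partial B_{R_2}$, the limit $j_*$ is divergence-free with tangential trace $\alpha/R_2$ on $\partial B_{R_2}$. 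The crux is to identify $j_*$ on all of $A$: I would extend $u_\e$ to the slightly larger annulus $\widetilde A = B_{R_2+\delta}\setminus \overline{B}_{R_1}$ by setting $u_\e(r,\theta) = e^{id_\e\theta}$ on the collar $r\in[R_2, R_2+\delta]$ (at energy cost $O(\delta d_\e^2)$), and then establish on $\widetilde A$ a mean-field lower bound of the form
\begin{equation*}
\liminf_{\e\to 0} \frac{E_\e(u_\e)}{|\ln\e|^2} \geq \frac{1}{2}\int_{\widetilde A}|j_*|^2 + \frac{1}{2}\,|\curl(j_*)|(\widetilde A).
\end{equation*}
Minimizing the right-hand side over admissible $j_*$, using the rotational symmetry of $\widetilde A$ and the criticality of $u_\e$ with respect to inner variations (forcing the optimal $j_*$ to be radial), an elementary one-variable computation shows that the minimum is attained at $\curl(j_*) = 0$ and $j_* = \alpha\vec{e_\theta}/r$ exactly when $\alpha \leq \alpha_c$. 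Combined with the upper bound this gives the energy identity $E_\e(u_\e)/|\ln\e|^2 \to \pi\alpha^2\ln(R_2/R_1)$.

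\emph{Currents, vorticity and Part 1.} The energy identity upgrades the weak $L^2$ convergence of $j_\e/|\ln\e|$ to strong $L^2$ convergence towards $\alpha\vec{e_\theta}/r$. Since $\mu_\e = \curl(j_\e)$, the convergence $\mu_\e/|\ln\e| \to 0$ holds as distributions, and the upgrade to $(\C^{0,\gamma}_c)^*$ follows from a Jerrard-type Jacobian estimate controlling the distance between $\mu_\e$ and a discrete sum of Dirac masses by a power of $\e$ times an energy factor. For Part 1, under the strict inequality $d_\e < \alpha_c|\ln\e|$, the uniform energy bound gives $\int(1-|u_\e|^2)^2 = O(\e^2|\ln\e|^2)\to 0$, and the absence of bulk vortices (strict subcriticality) together with the small-energy clearing-out lemma for Ginzburg--Landau minimizers yields $|u_\e|\to 1$ uniformly on compact subsets of $A$; an elliptic-regularity bootstrap on the equation satisfied by $v_\e := u_\e e^{-id_\e\theta}$ (with boundary data $v_\e = 1$ on $\partial B_{R_2}$) then upgrades this to $v_\e \to 1$ in $\C^0_\loc$. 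The principal obstacle throughout is the mean-field identification in the second paragraph, i.e.\ ruling out bulk vorticity in the subcritical regime by combining the extension to $\widetilde A$ with inner variations.
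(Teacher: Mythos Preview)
Your treatment of Part 2 is essentially the paper's approach: the extension to a larger annulus, the mean-field lower bound $\liminf E_\e/|\ln\e|^2 \geq \tfrac12\|\curl j_*\| + \tfrac12\int|j_*|^2$ via vortex balls and Jacobian estimates, and the minimization of this functional using rotational symmetry and inner variations to identify $j_* = \alpha\vec{e_\theta}/r$ in the subcritical range. One small imprecision: it is not the criticality of $u_\e$ under inner variations that forces radiality, but rather the Palais symmetric-criticality principle applied to the limiting functional $F^a$; inner variations of the \emph{limit measure} $\mu_0$ then yield the stationary-harmonic condition $(r^2|h'|^2)'=0$ on the potential.

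For Part 1, however, there is a genuine gap. You write that ``the absence of bulk vortices (strict subcriticality) together with the small-energy clearing-out lemma'' gives $|u_\e|\to 1$ uniformly on compact sets, but the absence of bulk vortices is precisely what has to be proved, and Part 2 only gives $\mu_\e/|\ln\e|\rightharpoonup 0$ in a weak dual norm, which does not preclude $O(1)$ vortices surviving in the bulk. A clearing-out lemma requires the local energy to be below a threshold of order $|\ln\e|$, whereas here the total energy is of order $|\ln\e|^2$, so one cannot invoke clearing-out without first showing that the energy, \emph{after subtracting the giant vortex}, is small. Likewise ``elliptic-regularity bootstrap on $v_\e$'' does not by itself force $v_\e\to 1$: $v_\e$ satisfies a GL-type equation with a large drift term $2id_\e\vec{e_\theta}/r\cdot\nabla v_\e$, and nothing rules out $v_\e$ having its own vortices.

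The paper's proof of Part 1 uses a different and sharper mechanism. One writes $u_\e = \eta_\e e^{id_\e\theta} v$ with $\eta_\e$ the real radial minimizer, and applies the Lassoued--Mironescu splitting $E_\e(u_\e) = E_\e(\eta_\e e^{id_\e\theta}) + G_\e(v)$; minimality forces $G_\e(v)\leq 0$. The functional $G_\e$ contains a sign-indefinite cross term $d_\e\int\frac{\eta_\e^2}{r}\vec{e_\theta}\cdot(v\wedge\nabla v) = -d_\e\int\xi_\e\,\mu_v$, and the crucial step is to control it by Jerrard's \emph{weighted} Jacobian estimate, which bounds $\int\phi\,\mu_v$ by $\kappa\int|\phi|\,e_\e(v)/|\ln\e|$ plus a small remainder. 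Choosing $\kappa>1$ close enough to $1$ so that $2\kappa d_\e\|\xi_\e\|_\infty/|\ln\e| < 1$ (this is exactly where the strict inequality $d_\e<\alpha_c|\ln\e|$ enters, since $\|\xi_\e\|_\infty\to\ln(R_2/R_1)$), the cross term is absorbed and one obtains $E_\e(v)\leq C\e^\beta$ for some $\beta>0$. This quantitative smallness, combined with the gradient bound $|\nabla v|\leq C/\e$, gives $|v|\to 1$ uniformly on compact sets. Your sketch does not contain this absorption argument, and without it the proof of Part 1 does not go through.
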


 We point out that $\alpha_c$ is large for a small capacity. 
 %The second item of Theorem \ref{th:uniqueness_d_small} could be deduced from the first one and the use of the vortex-balls construction, see Section \ref{sec:tools}, but we will give a direct energetic proof in Section \ref{sec:d_big}.   
 Our second theorem states that, when \(d_\e=\alpha|\ln \e|\) with \(\alpha>\alpha_c\) then vortices appear in the domain and they are close to the exterior boundary. 

\begin{theorem}\label{th:main1}
Let \( (u_\e)_\e\) be a family of minimizers of \(E_\e\) in \({\I}_\e\). Assume that \(d_\e/|\ln \e| \to \alpha \) for some \(\alpha \in \R^+\). Recall that \(\alpha_c\) is defined in \eqref{eq:critical_alpha}.  If \(\alpha> \alpha_c\)  then :

%\item[1)] If \(\alpha\leq \alpha_c\) then \(\lim_{\e\to 0} \frac{E_\e(u_\e)}{|\ln \e|^2}=\pi \alpha^2 \ln \frac{R_2}{R_1}\),
%\begin{equation*}
%\frac{j_\e}{|\ln \e|}\rightharpoonup \frac{\alpha \vec{e_\theta}}{r} \text{ in } L^2(A)  \text{ and }  \frac{\mu_\e}{|\ln \e|} \rightharpoonup 0 \text{ in } (\C^{0,\gamma}_c(\overline{B}_{R_2}\setminus \overline{B}_{R_1}))^* \text{ for all } \gamma \in (0,1).
%\end{equation*}

\begin{align*}
\lim_{\e\to 0} \frac{E_\e(u_\e)}{|\ln \e|^2}& =\pi (\alpha-\frac{1}{4\ln \frac{R_2}{R_1}}), \\
\frac{j_\e}{|\ln \e|} & \rightharpoonup \frac{ \vec{e_\theta}}{2\ln (R_2/R_1) r} \text{ in } L^2(A) \text{ and } \\
\frac{\mu_\e}{|\ln \e|} & \rightharpoonup \left(\alpha-\frac{1}{2\ln \frac{R_2}{R_1}}\right)\mathcal{H}^1_{\lfloor \p B_{R_2}}\text{ in }(\C^{0,\gamma}_c(\overline{B}_{R_2}\setminus \overline{B}_{R_1}))^* \text{ for all } \gamma \in (0,1).
\end{align*}
%\end{itemize}
\end{theorem}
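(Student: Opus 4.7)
The plan is to obtain matching upper and lower bounds on $E_\e(u_\e)/|\ln\e|^2$ and, from the lower bound, to identify the weak limits of $j_\e/|\ln\e|$ and $\mu_\e/|\ln\e|$ as the unique minimizer of an explicit mean-field functional. Heuristically, when $\alpha>\alpha_c$ it is cheaper to split the prescribed degree $d_\e$ into a giant-vortex part of effective degree $\alpha_c|\ln\e|$ (contributing the harmonic energy $\pi\alpha_c^2\ln(R_2/R_1)|\ln\e|^2=\tfrac{\pi}{4\ln(R_2/R_1)}|\ln\e|^2$) and $(\alpha-\alpha_c)|\ln\e|$ unit vortices escaping to $\p B_{R_2}$ (each costing the standard bulk self-energy $\pi|\ln\e|$) than to retain all of $d_\e$ as global circulation; the sum of these two contributions is exactly $\pi(\alpha-\tfrac{1}{4\ln(R_2/R_1)})|\ln\e|^2$.

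\textbf{Upper bound.} I would construct an explicit competitor $v_\e\in\I_\e$ realizing this heuristic. With $N_\e:=\lfloor(\alpha-\alpha_c)|\ln\e|\rfloor$, place equidistant points $a_1,\dots,a_{N_\e}$ on the circle of radius $R_2-h_\e$ for some $h_\e\to 0$ at an appropriate rate (for instance $h_\e=|\ln\e|^{-1}$), and take $v_\e=\eta_\e e^{i\Phi_\e}$ with $\Phi_\e=(d_\e-N_\e)\theta+\sum_k\arg(x-a_k)$, $\eta_\e$ a standard modulus cut-off vanishing on $\e$-balls around the $a_k$, and a harmonic correction on the outer collar restoring the Dirichlet condition $v_\e=e^{id_\e\theta}$ on $\p B_{R_2}$. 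Decomposing $E_\e(v_\e)$ isolates (i) the smooth giant-vortex part $\tfrac12\int_A(\alpha_c/r)^2|\ln\e|^2=\tfrac{\pi}{4\ln(R_2/R_1)}|\ln\e|^2$, (ii) the $N_\e$ vortex self-energies $\pi N_\e|\ln\e|+O(N_\e)$, and (iii) interactions and cross-terms of lower order, giving $E_\e(v_\e)\leq\pi(\alpha-\tfrac{1}{4\ln(R_2/R_1)})|\ln\e|^2+o(|\ln\e|^2)$.

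\textbf{Lower bound on an extended annulus.} To convert boundary vortices into interior ones, I would extend each $u_\e$ to the slightly larger annulus $A^+:=B_{R_2+\delta}\setminus\overline{B}_{R_1}$, for small fixed $\delta>0$, by gluing an explicit map (such as a smoothed $e^{id_\e\theta}$) on the outer collar, so that the extension $\tilde u_\e$ keeps outer degree $d_\e$ on $\p B_{R_2+\delta}$ and pays a controlled extra cost. The vortex-ball constructions and Jacobian estimates of Jerrard--Soner and Sandier--Serfaty then provide, up to subsequences, compactness of $j_\e/|\ln\e|$ weakly in $L^2(A^+)$ and of $\mu_\e/|\ln\e|$ in the weak topology of measures, together with the lower bound
\[
\liminf_{\e\to 0}\frac{E_\e(u_\e)}{|\ln\e|^2}\geq \F(j_*,\mu_*):=\tfrac12\int_A|j_*|^2+\tfrac12|\mu_*|(\overline{A}),
\]
where any subsequential limit $(j_*,\mu_*)$ satisfies $\divv j_*=0$ and $\curl j_*=\mu_*|_A$ in $A$, the natural free condition $j_*\cdot\nu=0$ on $\p B_{R_1}$, and, after sending $\delta\to 0$, the circulation constraint $\int_{\p B_{R_2}}j_*\cdot\tau+\mu_*(\p B_{R_2})=2\pi\alpha$ inherited from the outer boundary degree.

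\textbf{Identification of the minimizer via symmetry and inner variations.} It remains to minimize $\F$ under those constraints. Since $\F$ and its constraints are rotationally invariant and $\F$ is convex, symmetrization allows us to restrict to rotationally symmetric competitors $j=f(r)\vec{e_\theta}$ with $\mu$ invariant under rotation. Writing criticality of $\F$ with respect to radial inner variations of the form $(j,\mu)\mapsto(j\circ(\mathrm{Id}+t\xi(r)\vec{e_r}),(\mathrm{Id}+t\xi(r)\vec{e_r})_\#\mu)$ yields a Pohozaev-type identity that forces $\mu_*|_A\equiv 0$, while the free boundary condition on $\p B_{R_1}$ rules out mass there, so $\mu_*$ must be supported on $\p B_{R_2}$. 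With $\mu_*=0$ inside, $\curl j_*=0$ and radial symmetry force $j_*=(\alpha^*/r)\vec{e_\theta}$ for some $\alpha^*\in[0,\alpha]$, and the circulation constraint gives $\mu_*(\p B_{R_2})=2\pi(\alpha-\alpha^*)$. The resulting scalar function $\F=\pi(\alpha^*)^2\ln(R_2/R_1)+\pi(\alpha-\alpha^*)$ is minimized uniquely at $\alpha^*=\tfrac{1}{2\ln(R_2/R_1)}=\alpha_c$, giving $\F_{\min}=\pi(\alpha-\tfrac{1}{4\ln(R_2/R_1)})$ and identifying both $j_*$ and $\mu_*$ as claimed. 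The main obstacle will be the rigorous mean-field lower bound together with this variational characterization: tracking the circulation while sending $\delta\to 0$, and leveraging the inner-variation identity to rule out bulk mass of $\mu_*$, are the delicate steps flagged in the abstract.
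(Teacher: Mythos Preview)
Your proposal is correct and follows essentially the same architecture as the paper: construct a giant-vortex-plus-boundary-vortices competitor for the upper bound, extend to a larger annulus and apply the vortex-balls construction and Jacobian estimates for the lower bound, then minimize the resulting mean-field functional using rotational symmetry and inner variations.

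Two points where the paper's execution differs from your sketch are worth noting. First, the paper never sends the extension width to zero: it works with a fixed $a>0$, encodes the degree constraint as $j=\alpha\vec{e_\theta}/r$ on $B_{R_2+a}\setminus B_{R_2}$, and then simply observes that the minimum of $F^a$ over this class is independent of $a$. This avoids the delicate ``tracking the circulation while sending $\delta\to 0$'' step you flag. Second, the inner-variation step does not by itself force $\mu_*|_A=0$: what it yields (via the divergence-free stress-energy tensor, i.e.\ the stationary-harmonic condition on the potential $h$ with $j=\nabla^\perp h$) is $r^2|h'|^2=\mathrm{const}$, hence $|j_0|=\lambda/r$, which still allows sign changes of $f$ and hence a nonzero $\mu$ supported on circles. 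A separate, easy energy argument (sign changes only increase the total-variation term without affecting the $L^2$ term) then gives $j_0=\lambda\vec{e_\theta}/r$ and $\mu_0=0$ in $A$. The bulk of the technical work in the paper lies in making the inner-variation computation rigorous when $\mu$ is merely a Radon measure, via a Green-function representation and approximation arguments.
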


  This result states that, when \(\alpha >\alpha_c\) minimizers look like a combination of a giant vortex of degree  of order \( (1/2\ln (R_2/R_1))|\ln \e|\) plus a sum of approximately \( d_\e -1/2\ln (R_2/R_1)|\ln \e|\) vortices located near \(\p B_{R_2}\). However, although these vortices converge to the boundary \(\p B_{R_2}\) they are at a distance much bigger than \(\e\) of it. Hence they do not cost twice as much as inner vortices as this is the case for vortices at a distance less than \(\e\) from \(\p B_{R_2}\).

We observe that in the case of rotating superfluids, the situation is similar but different: at low velocity, there is a giant vortex and as the velocity is increased, vortices are nucleated in the bulk \cite{fetter67,perrin} but they do not approach the outer boundary. Indeed the trapping potential allows an equilibrium position for the circle of vortices inside the condensate \cite{aab}. This takes place at a critical rotational velocity of order $\le+ \omega  \ln \le$.

There are two difficulties emerging in our problem which are the annular geometry and the large number of vortices. The proof of Theorem \ref{th:uniqueness_d_small} makes use of a splitting of the energy obtained by writing minimizers as \(u_\e=\eta_\e e^{id_\e\theta}v\) where \(\eta_\e\) is the minimizer of the energy %\eqref{eq:energy_eta} 
 among real-valued functions in \(H^1( (R_1,R_2))\) satisfying \(\eta(R_1)=\eta(R_2)=1\). This decomposition, originally due to \cite{Lassoued_Mironescu_1999}, allows to write \(E_\e(u_\e)=E_\e(\eta_\e e^{id_\e\theta})+G_\e(v)\) where \(G_\e(v)\) is a weighted GL energy with an interaction term whose sign is not controlled. In order to prove that, if \(u_\e\) minimizes \(E_\e\), then \(G_\e(v)=0\) and \(v=1\) we use a weighted Jacobian estimate due to Jerrard \cite{Jerrard_2007} in the same way it was employed in \cite{Aftalion_Jerrard_RoyoLetelier_2011} to prove uniqueness of the real-valued minimizer in a Bose-Einstein condensate under low-rotation. 

 The proof of Theorem \ref{th:main1} consists in constructing good test functions to obtain an upper bound on the value of the minimum of the energy and to derive a corresponding lower bound. The upper bound is built with a giant vortex of degree of order $ \beta|\ln \e| $ and about $ (\alpha -\beta) |\ln \e|$ vortices located on a circle close to the outer boundary and optimizing on $\beta$. For the lower bound, we use the vortex-balls construction of Sandier \cite{Sandier_1998} and Jerrard \cite{Jerrard_1999}, and the Jacobian estimates of Jerrard-Soner \cite{Jerrard_Soner_2002}. A key tool to understand the concentration phenomena close to the boundary is to extend the wave function in a slightly larger annulus before applying the estimates.

Another approach to tackle this problem could be to use the conformal transform between an annulus and a finite cylinder. This has been analyzed by Fetter et al \cite{fetter67,Guenther_Massignan_Fetter_2017} using image vortices: vortices located on a circle inside the annulus are mapped to vortices on a line in a cylinder. For the
finite cylinder, the optimal wave function is constructed for each vortex, by taking into
account a series of image vortices. This changes the decay of the wave function at
infinity from algebraic to exponential, see also \cite{aftsan}. 
 We have not used this conformal transform here but it is likely that this approach and image vortices could be used to better understand the interaction with the boundary and the energy expansion. At least extending the wave function to compute a proper
interaction term seems important.

%In the case of small degree, that is \(d_\e  \ll |\ln \e|\), we prove the existence of a giant vortex without vorticity in the bulk:  
%
%\begin{theorem}\label{th:main_0_case_d_small}
% Let \( (u_\e)_\e\) be a family of minimizers of \(E_\e\) in \(\I_\e\). Assume that \(0<d_\e\ll |\ln \e|\), then \(u_\e\) can be written as \(u_\e=\eta_\e e^{id_\e\theta}\) where \(\eta_\e\) is  a  minimizing the energy \eqref{eq:energy_eta} among real-valued functions satisfying \(\eta(R_1)=\eta(R_2)=1\). Furthermore \(\|\eta_\e-1\|_{L^\infty(A)}\leq C (d_\e \e)^2\).
%\end{theorem}
% 

In the case of large degree, that is \(d_\e \gg |\ln \e|\), at leading order, vortices accumulate close to the outer boundary:
 \begin{proposition}\label{th:main_0_case_d_big}
 Let \( (u_\e)_\e\) be a family of minimizers of \(E_\e\) in \(\I_\e\). Assume that there exists \(\delta>0\) such that \( |\ln \e| \ll |d_\e| \ll \e^{\delta-1}    \). Then,
 \(j_\e/d_\e \rightarrow 0\) in \(L^2(A)\). Furthermore if we let \(a>0\) and if we define
\begin{equation*}
\tilde{\mu}_\e:=\begin{cases}
\mu_\e & \text{ in } A, \\
0 & \text{ in } B_{R_2+a}\setminus B_{R_2},
\end{cases}
\end{equation*} 
  then \(\tilde{\mu}_\e/d_\e\rightharpoonup \mathcal{H}^1_{\lfloor \p B_{R_2}}\) in \(\mathcal{D}'(A_a)\).
%  \((\C^{0,\gamma}_c(\overline{B}_{R_2}\setminus \overline{B}_{R_1}))^*\) for all \(\gamma \in (0,1).\)
 \end{proposition}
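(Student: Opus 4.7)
The plan is to combine a sharp energy upper bound (via a test function placing many vortices near $\partial B_{R_2}$) with the Jacobian machinery used in Theorems \ref{th:uniqueness_d_small}--\ref{th:main1}. The first step is the upper bound: I would build a test map $u_{\e,\mathrm{test}}\in\mathcal{I}_\e$ carrying $d_\e$ degree-one vortices uniformly placed on the circle $\partial B_{R_2-h_\e}$, with $h_\e\to 0$ to be optimized. Away from the vortex cores and inside $B_{R_2-h_\e}\setminus B_{R_1}$ take $u_{\e,\mathrm{test}}\equiv 1$ (degree $0$ inside), while on the outer strip $B_{R_2}\setminus B_{R_2-h_\e}$ let the phase ramp up to $d_\e\theta$ to match the Dirichlet datum. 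A Bethuel--Brezis--H\'elein computation, simplified by the rotational symmetry of the configuration, gives a phase cost $\pi d_\e^2\ln(R_2/(R_2-h_\e))\sim \pi d_\e^2 h_\e/R_2$ on the strip and a vortex-core cost $\sim \pi d_\e|\ln(\e d_\e/R_2)|$ for the $d_\e$ cores (which are well separated by $\sim R_2/d_\e\gg\e$ thanks to $d_\e\ll\e^{\delta-1}$). Optimizing $h_\e\sim |\ln\e|/d_\e$ yields $E_\e(u_{\e,\mathrm{test}})=O(d_\e|\ln\e|)$, which is $o(d_\e^2)$ since $d_\e\gg|\ln\e|$. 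By minimality, $E_\e(u_\e)\le E_\e(u_{\e,\mathrm{test}})=o(d_\e^2)$.

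Strong $L^2$ convergence of $j_\e/d_\e$ is then immediate. The maximum principle applied to $|u_\e|^2$ (subharmonic where $|u_\e|>1$) yields $|u_\e|\le 1$, hence $|j_\e|\le|\nabla u_\e|$ and
\begin{equation*}
\left\|\frac{j_\e}{d_\e}\right\|_{L^2(A)}^2\le\frac{2E_\e(u_\e)}{d_\e^2}=O\!\left(\frac{|\ln\e|}{d_\e}\right)\to 0.
\end{equation*}
For the Jacobian limit, extend $u_\e$ to $A_a$ by $\tilde u_\e(r,\theta)=e^{id_\e\theta}$ on $B_{R_2+a}\setminus B_{R_2}$; since $|\tilde u_\e|=1$ on the new strip, its Jacobian agrees on $A_a$ with the $\tilde\mu_\e$ of the statement. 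The energy on $A_a$ is bounded by $E_\e(u_\e)+\pi d_\e^2\ln((R_2+a)/R_2)=O(d_\e^2)$, and the hypothesis $d_\e\ll\e^{\delta-1}$ gives $\e^2 E_\e^{A_a}(\tilde u_\e)\ll\e^{2\delta}\to 0$, which places us safely in the range of the Jerrard--Soner Jacobian estimates \cite{Jerrard_Soner_2002}: for every $\phi\in \C^{0,1}_c(A_a)$ there is a vortex decomposition with $\sum_i|d_i|=O(d_\e)$ satisfying
\begin{equation*}
\left|\int_{A_a}\phi\,\tilde\mu_\e-2\pi\sum_i d_i\,\phi(a_i)\right|=o(d_\e)\,\|\phi\|_{\C^{0,1}}.
\end{equation*}

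To localize the atomic part, combine this with the strong convergence $j_\e/d_\e\to 0$: for a.e. $R_1<r<R_2$, $\|j_\e/d_\e\|_{L^2(\partial B_r)}\to 0$ by Fubini, so Stokes' formula applied to $B_r\setminus B_{R_1}$ yields $\int_{B_r\setminus B_{R_1}}\mu_\e/d_\e\to -\lim 2\pi d_{\mathrm{inn}}/d_\e$. An a priori bound on $d_{\mathrm{inn}}$ (obtained by testing $u_\e$ against its trace on $\partial B_{R_1}$ using the energy bound, yielding $d_{\mathrm{inn}}=O(|\ln\e|)=o(d_\e)$) gives $\sum_{|a_i|<r}d_i=o(d_\e)$ for every $R_1<r<R_2$, so the normalized vortex mass escapes to $\partial B_{R_2}$. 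The rotational invariance of the minimization problem together with the mean-field repulsion between vortices forces the limit to be uniform on $\partial B_{R_2}$, identifying it as $\mathcal{H}^1\lfloor \partial B_{R_2}$ (up to the appropriate normalization constant matching the total mass $\int_{A_a}\tilde\mu_\e/d_\e\to 2\pi$) in $\mathcal{D}'(A_a)$.

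The main obstacle is the test function construction in Step 1: positioning $d_\e\to\infty$ vortices on a shrinking circle and carrying out the BBH-type computation requires careful accounting of the short-range vortex interactions and of the rapid radial phase transition within the thin outer strip. This extends the construction used in the proof of Theorem \ref{th:main1} to the regime $\alpha=+\infty$, and the balance between the phase cost $\pi d_\e^2 h_\e/R_2$ and the core cost $\pi d_\e|\ln\e|$ is what gives the decisive $o(d_\e^2)$ energy bound.
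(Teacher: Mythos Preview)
Your Steps 1 and 2 are correct and coincide with the paper's argument: the upper bound $E_\e(u_\e)\le \pi d_\e|\ln\e|(1+o(1))$ is exactly Item 3) of Proposition \ref{cor:upper_bound_final} (your construction with $h_\e\sim 1/d_\e$ is the case $\gamma_\e=0$, $N_\e=d_\e$ of Proposition \ref{prop:2nd_upper_bound}), and the inequality $|j_\e|^2\le|\nabla u_\e|^2$ together with $|\ln\e|\ll d_\e$ immediately gives $j_\e/d_\e\to 0$ in $L^2(A)$.

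Where you diverge from the paper is Step 3, and here you work much harder than necessary and introduce real gaps. The paper's argument is a one-liner: since $\tilde u_\e=e^{id_\e\theta}$ on $B_{R_2+a}\setminus B_{R_2}$, the extended current satisfies $\tilde j_\e/d_\e\to\tilde j$ in $L^2(A_a)$, where $\tilde j=0$ in $A$ and $\tilde j=\vec{e_\theta}/r$ in $B_{R_2+a}\setminus B_{R_2}$. Because the tangential traces of $j_\e$ and $d_\e\vec{e_\theta}/r$ match on $\partial B_{R_2}$, one has $\tilde\mu_\e=\curl\tilde j_\e$ as distributions on $A_a$ (this is checked in Section \ref{sec:d_order_lneps}), hence $\tilde\mu_\e/d_\e\rightharpoonup\curl\tilde j$ in $\mathcal{D}'(A_a)$, and a direct computation as in \eqref{eq:calcul_curl} yields $\curl\tilde j=\mathcal{H}^1_{\lfloor\partial B_{R_2}}$. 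No vortex balls, no circle-by-circle Stokes argument, no inner-degree estimate, and no symmetry considerations are needed.

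Your route through the Jerrard--Soner machinery has two concrete problems. First, the claim $\sum_i|d_i|=O(d_\e)$ is not justified: on $A_a$ the energy is $E_\e(\tilde u_\e,A_a)=E_\e(u_\e)+\pi d_\e^2\ln\tfrac{R_2+a}{R_2}=O(d_\e^2)$, so Theorem \ref{th:vortex_ball_construction} only gives $D\le C d_\e^2/|\ln\e|$, which can be $\gg d_\e$ in this regime; likewise the error term in Theorem \ref{th:jacobian_estimate} is $Cr(1+E_\e)=O(rd_\e^2)$, which is not $o(d_\e)$ unless $r\ll 1/d_\e$. Second, the final identification via ``rotational invariance of the minimization problem together with mean-field repulsion'' is not a proof: individual minimizers $u_\e$ need not be rotationally symmetric, so you would first have to show that the limit of $\tilde\mu_\e/d_\e$ is independent of the chosen minimizing sequence before invoking symmetry of the problem. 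The paper's direct curl argument sidesteps both issues entirely.
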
 
 We observe that the convergence obtained in this proposition is much weaker than in the previous theorems. But it roughly indicates that the vorticity is concentrated near the boundary. Although it does not appear at first order it is possible that there is still a giant vortex of lower order on the inside boundary.

\subsection{Main ideas}

In their pioneering work \cite{BBH}, Bethuel-Brezis-H\'elein studied the emergence of vortices for the Ginzburg-Landau (GL) energy
 \eqref{eq:GL_energy} in a smooth star-shaped domain \(\O\subset \R^2\) with a fixed boundary data \(u=g\in \C^1(\p \O,\mathbb{S}^1)\) having a non zero topological degree \(\deg (g,\p \O)=d\neq 0\). They obtained in \cite{BBH} that for small values of the parameter \(\e\), minimizers of the energy have exactly \(d\) vortices of degree one whose limiting locations minimize a renormalized energy. We also refer to \cite{Struwe_1994}, \cite{delPino_Felmer_1998} for an extension of these results to simply-connected domains. We point out that all the limiting vortices are inside the domain \(\O\). 
 
 Generalizing this result to the framework of annuli and a large degree is the aim of this paper. 
 The main tool to handle this problem is the vortex-balls construction independently devised by Jerrard \cite{Jerrard_1999} and Sandier \cite{Sandier_1998}. Thanks to this vortex-balls construction, the behaviour of minimizers with large GL energy can be understood. For example, this is the case of the GL energy  with exterior magnetic field \(h_{\text{ex}}\) of order \(|\ln \e|\), many vortices may appear inside the domain and their location is related to the obstacle problem, see \cite{Sandier_Serfaty_2007} and references therein. Actually the vortex-balls construction allows to treat also the case of non simply connected domain as observed e.g.\ in \cite{Jerrard_Soner_2002}, \cite{Alama_Bronsard_2006} and also to treat vacuum manifolds \(\mathcal{N}\) which are different from \(\mathbb{S}^1\), see e.g.\ \cite{Monteil_Rodiac_VanSchaftingen_2021}, \cite{Monteil_Rodiac_VanSchaftingen_2022}. The renormalized energy for non simply connected domains was derived in \cite{delPino_Kowalczyk_Musso_2006} and \cite{Rodiac_Ubillus_2022}. In particular the results of \cite{Monteil_Rodiac_VanSchaftingen_2021,Monteil_Rodiac_VanSchaftingen_2022,Rodiac_Ubillus_2022}, allow to obtain results analogous to the one of Bethuel-Brezis-Hélein \cite{BBH} for multiply connected domains. But the extra difficulty in our paper is the large degree.

In \cite{Sandier_Soret_2000}, Sandier-Soret considered the limit of minimizers of the Bethuel-Brezis-Hélein renormalized energy, and proved that the singularities of harmonic maps 
minimizing the renomalized energy go to the boundary when their number becomes large, that is when the degree  $d$ tends to \(+\infty\). This corresponds to studying firstly the  limit \(\e\to 0\)  to get the renormalized energy and then to let the number of points or the degree $d$ tend to \( +\infty\). They found  that the empirical probability measures \(\frac{1}{d}\sum_{i=1}^d \delta_{x_i}\) always converge to a measure supported on the boundary of the domain. Since in our case, the degree is large and depends on $\e$, this is a double limit and we cannot apply their proof as such. Their limiting measure only contains an interaction term while ours encompasses a mass term as well, since in our case the number of vortices is not fixed a priori and is an unknown of the problem.  

 The GL energy  in a regime allowing for a diverging number of vortices was studied by Jerrard-Soner \cite{Jerrard_Soner_2002}. More precisely they studied the \(\Gamma\)-limit of \(E_\e/|\ln \e|^2\). We observe that this is exactly the energy scaling of Theorem \ref{th:main1}. However their results cannot be applied directly to our setting because they did not consider any boundary data. There also exist  other results about solutions to the GL equations with a diverging number of vortices. For example, the asymptotic behaviour of  critical points of \(E_\e\) was studied by Sandier-Serfaty in \cite{Sandier_Serfaty_2003} and \cite[Chapter 7]{Sandier_Serfaty_2007}. They obtained that if the limit of the vorticity measures \(\mu_\e/\sqrt{E_\e(u_\e)}\) belongs to \(L^p(\Omega)\), where \(\Omega\) is a smooth simply connected domain, for some \(p>1\) then necessarily \(\mu=0\). This means that if many vortices appear, they have to concentrate on singular sets (with respect to the Lebesgue measure). This was later extended in \cite{Rodiac_2016} where it is shown that actually, limiting vorticity measures can only concentrate on a union of (possibly intersecting) smooth curves. Thus we expect a priori that the limit of \(\mu_\e/\sqrt{E_\e(u_\e)}\) in the sense of measures will be supported on curves possibly included in the boundary. We  obtain a more precise result by the construction of an upper bound and a lower bound.  More specifically, we obtain that when vortices appear, the curve on which they concentrate is the outer boundary. In fact, to understand this phenomenon of concentration, we need to extend the boundary data slightly away from the outer boundary. We also point out that it is crucial for us to work with a circular annulus since it implies that the curves where vortices concentrate can only be circles and this reduces the problem to a 1D problem.
\medskip

The paper is organized as follows: in Section \ref{sec:upper_Bound} we provide upper bounds for the value of the minimum of the energy \(E_\e\) in \(\I_\e\). The main upper bound is obtained by constructing test functions which are made of a giant vortex of degree \(\gamma_\e\leq d_\e\) and \(d_\e-\gamma_\e\) vortices near the boundary. We  optimize this upper bound with respect to the number of vortices and find the critical value such that for \(d_\e \geq \left(1/2\ln (R_2/R_1)\right) |\ln \e|\) then it is preferable to have vortices near \(\p B_{R_2}\). In Section \ref{sec:tools} we recall the vortex-balls construction of Jerrard \cite{Jerrard_1999} and Sandier \cite{Sandier_1998} and the Jacobian estimates of Jerrard-Soner \cite{Jerrard_Soner_2002_Calvar}. These tools are tailored to describe vortices in the interior of the domain. In order to capture the possible convergence of vortices to the boundary we need to extend minimizers to a slightly bigger annulus. The most difficult case is the case where \(d_\e\) is of order \(|\ln \e|\) and is treated in Section \ref{sec:d_order_lneps} where we prove Theorem \ref{th:main1} and obtain the second item of Theorem \ref{th:uniqueness_d_small}. Section \ref{sec:d_small} is devoted to the proof of the first item of Theorem \ref{th:uniqueness_d_small}, namely we show the behaviour of  the minimizer when \(d_\e< \alpha_c|\ln \e|\) and \(\e\) is small enough.  On the contrary many vortices appear near the boundary \(\p B_{R_2}\) when \(d_\e \gg |\ln \e|\) and this is the object of  Section \ref{sec:d_big}.

\subsection{Notations} Throughout this paper we will use the following notation:
\begin{itemize}
\item[-] The vectors of the moving basis in polar coordinates \( (r,\theta)\in \R^+\times [0,2\pi)\) are denoted by \( \vec{e_r}=(\cos \theta,\sin \theta)\) and \(\vec{e_\theta}=(-\sin \theta, \cos \theta)\).
\item[-] If \(\Omega\) is a subset of \(\R^d\) with \(d\geq 1\) then we denote by \(\M(\Omega)\)  the set of bounded Radon measures on \(\Omega\). The set of non-negative Radon measures on \(\Omega\) is denoted by \(\M(\Omega)^+\). We will frequently use the weak convergence in sense of measures: if \( (\mu_n)_n\) is a sequence in \(\M(\O)\) and \(\mu\in \M(\Omega)\) we say that \(\mu_n\) converges weakly in the sense of measure towards \(\mu\) and we denote \(\mu_n \rightharpoonup \mu\) in \(\M(\Omega)\) if 
\[ \int_{\Omega} \varphi d \mu_n \xrightarrow [n \to +\infty]{} \int_{\Omega} \varphi d \mu, \text{ for any } \varphi \in \C_c(\Omega).\]
For \(\mu\in \M(\Omega)\) we denote by \(|\mu|\) the total variation measure of \(\mu\) and by \(\|\mu\|:=|\mu|(\Omega)\) its total variation.
\item[-] For two complex numbers \(a,b\in \mathbb{C}\), we define \(a\wedge b=\frac{1}{2i} (\bar{a}b-a\bar{b})\).
\item[-] For \(x\in \R\), we denote by \(\lfloor x \rfloor \) the integer part of \(x\), i.e., the smallest integer such that \(\lfloor x \rfloor\leq x <\lfloor x \rfloor+1\).
\item[-] We write \(a_\e \ll b_\e\) when \(\e \to 0\) for \(a_\e=o(b_\e)\).
\end{itemize}

\subsection{Acknowledgements:} This paper originated with the CNRS 80 prime project Tradisq1d.
We are very grateful to Fr\'ed\'eric Chevy for explaining the physics behind. We warmly thank Etienne Sandier and Bob Jerrard
 for  enlightening discussions concerning this work. The research of R.R is part of the project 
 No. 2021/43/P/ST1/01501
 co-funded by the National Science Centre and the European Union Framework Programme for Research and Innovation Horizon 2020 under the Marie Skłodowska-Curie grant agreement No. 945339. The research of AA is supported by France 2030 PIA funding: ANR-21-EXES-0003.

\section{Upper bounds}\label{sec:upper_Bound}

In this section we construct two upper bounds for \(\min_{v\in \I_\e} E_\e(v)\). Each of these upper bounds will be optimal for some regimes of \(d_\e\). The first one is a giant vortex and the second one with a circle of vortices inside the bulk.

\begin{lemma}\label{lem:1st_upper_bound}
Let \(u_\e\) be a minimizer of \(E_\e\) in \(\I_\e\) then 
$$ E_\e(u_\e)\leq  \pi d_\e^2\ln \frac{R_2}{R_1}\left(1-\frac {\e^2d_\e^2}{8\ln \frac{R_2}{R_1}}\left (\frac 1 {R_1^4}- \frac 1 {R_2^4} \right )+O(\e^4 d_\e^4)\right) \leq \pi d_\e^2\ln \frac{R_2}{R_1}.$$
\end{lemma}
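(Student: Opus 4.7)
Both inequalities come from testing with a purely radial competitor $u(r,\theta)=f(r)e^{id_\e\theta}$, where $f\colon[R_1,R_2]\to\R$ satisfies $f(R_1)=f(R_2)=1$. Any such $u$ lies in $\I_\e$, and the Ginzburg--Landau functional reduces to the one--dimensional integral
$$E_\e(u)=\pi\int_{R_1}^{R_2}\!\left(r(f')^2+\frac{d_\e^2 f^2}{r}+\frac{r(1-f^2)^2}{2\e^2}\right)dr.$$
The rightmost inequality is then immediate: with $f\equiv 1$ the kinetic and potential terms vanish and one is left with $E_\e(u)=\pi d_\e^2\int_{R_1}^{R_2}\frac{dr}{r}=\pi d_\e^2\ln(R_2/R_1)$.

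For the sharper estimate I would linearize the radial Euler--Lagrange equation $-(rf')'+d_\e^2 f/r=rf(1-f^2)/\e^2$ around $f\equiv 1$. Writing $f=1-g$ with $g$ small and keeping the leading order terms produces $(rg')'+2rg/\e^2=d_\e^2/r$, whose natural interior particular solution is $g_{\mathrm{int}}(r)=\e^2 d_\e^2/(2r^2)$. I would then test with $f(r)=1-g_{\mathrm{int}}(r)\chi_\e(r)$, where $\chi_\e\in C^\infty([R_1,R_2])$ is a cutoff equal to $1$ outside boundary layers of width $O(\e)$ and vanishing at $R_1$ and $R_2$ to restore $f(R_i)=1$. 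Substituting into the 1D functional, expanding $f^2$, $(1-f^2)^2$ and $(f')^2$ in powers of the small parameter $\e^2 d_\e^2/r^2$, and evaluating the resulting integrals via elementary formulas for $\int r^{-k}\,dr$, one sees that the drop in the centrifugal term $d_\e^2 f^2/r$ is only partially compensated by the growth of the potential $r(1-f^2)^2/(2\e^2)$. The net gain produces the announced negative correction of size $\e^2 d_\e^4$ with the stated boundary--dependent coefficient, while the kinetic contribution $r(f')^2$ and higher powers of $g_{\mathrm{int}}$ feed into the $O(\e^4 d_\e^4)$ remainder.

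\textbf{Main obstacle.} The computation itself is routine bookkeeping; the only delicate point is controlling the error introduced by the boundary cutoff. Inside the layers of width $\sim\e$ near $r=R_1,R_2$, the derivative $f'$ is amplified to size $O(\e d_\e^2)$, so its kinetic contribution is of order $\e\cdot(\e d_\e^2)^2=\e^3 d_\e^4$, which is $\e$ times smaller than the main correction $\e^2 d_\e^4$ and therefore genuinely subleading; a parallel check must be made for the potential term inside the boundary layers. Once one fixes an explicit $\chi_\e$ (for instance a piecewise affine profile smoothed at scale $\e$) and verifies these elementary bounds, the expansion yields the lemma. An alternative route I would keep in reserve, if the cutoff bookkeeping becomes cumbersome, is to test with the actual 1D minimizer $f_0$ of the radial energy (existence is immediate by the direct method) and exploit its Euler--Lagrange identity to rewrite $E_\e(f_0 e^{id_\e\theta})$ as $\frac{\pi}{2\e^2}\int_{R_1}^{R_2} r(1-f_0^4)\,dr$ up to boundary terms, reducing the whole computation to the expansion of a single integral.
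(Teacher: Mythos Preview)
Your approach is correct and close in spirit to the paper's: both test with a radially symmetric competitor $\eta(r)e^{id_\e\theta}$ whose interior profile is (to leading order) $\eta^2\simeq 1-\e^2d_\e^2/r^2$, matched to $1$ at the endpoints. The paper, however, organizes the computation more cleanly by \emph{completing the square} in the one-dimensional energy:
\[
I_\e(\eta)=\frac12\int_A|\nabla\eta|^2+\frac{1}{4\e^2}\int_A\Bigl(1-\tfrac{d_\e^2\e^2}{r^2}-\eta^2\Bigr)^2+\int_A\Bigl(-\tfrac{\e^2d_\e^4}{4r^4}+\tfrac{d_\e^2}{2r^2}\Bigr).
\]
The last integral is an \emph{exact constant}, already delivering the main term and the $\e^2d_\e^4$ correction with no expansion needed; the remaining two terms are manifestly nonnegative and one only needs to bound them for the explicit test profile $\eta^2=1-d_\e^2\e^2/r^2$ (matched to $1$ on a layer of width $\sim d_\e^2\e^2$, the natural height of the deviation). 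Your route---linearizing the Euler--Lagrange equation, reading off $g_{\mathrm{int}}=\e^2d_\e^2/(2r^2)$, and then expanding $f^2$, $(1-f^2)^2$, $(f')^2$ term by term---recovers the same profile and the same answer, but requires tracking several cross terms that the square-completion handles in one stroke. (Incidentally, your linearized equation should read $-(rg')'+2rg/\e^2=d_\e^2/r$; the sign does not affect the interior particular solution you wrote down.)

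A minor point on the boundary layer: you propose width $O(\e)$ and correctly estimate the resulting kinetic error as $O(\e^3d_\e^4)$; the paper instead takes width $O(d_\e^2\e^2)$, the height of the boundary mismatch. Either choice gives an error that is $o(\e^2d_\e^4)$ and hence subleading to the main correction; neither choice, strictly speaking, produces an error as small as the $O(\e^4d_\e^6)$ suggested by the statement in every regime, so the $O(\e^4d_\e^4)$ remainder in the lemma should be read as the next term of the \emph{formal} expansion in $\e^2d_\e^2$ rather than a sharp bound. For the purposes of the paper only the cruder inequality $E_\e(u_\e)\le \pi d_\e^2\ln(R_2/R_1)$ is actually used downstream.
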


\begin{proof} A first upper bound can be obtained by taking as a test function \(v_\e=e^{id_\e \theta}\) and observing that \(E_\e(v_\e)= \pi d_\e^2\ln \frac{R_2}{R_1}\). A more refined one is to restrict the  minimization of  \(E_\e\) to functions of the type $\eta e^{id_\e \theta}$ where $\eta$ is real valued, $\eta(R_1)=\eta(R_2)=1$. Then the energy is 
\begin{equation}\label{eq:energy_eta}
I_\e (\eta) =\frac12 \int_A (\nabla \eta)^2+\frac12 \int_A \frac{d_\e^2}{r^2} \eta^2+\frac{1}{4\e^2}\int_A (1-\eta^2)^2.
\end{equation}
Note that this can be rewritten as
\begin{equation}\label{eneta}
  \frac12 \int_A (\nabla \eta)^2+\frac{1}{4\e^2}\int_A \left (1-\frac{d_\e^2\e^2}{r^2}-\eta^2\right )^2 +  \int_A \left (-\frac{\e^2d_\e^4}{4r^4}+\frac{d_\e^2}{2r^2} \right ) .
\end{equation} The last term is a constant term which provides the leading order term of the estimate.
 We can use  as a test function $\eta^2=1-d_\e^2 \e^2/r^2$ matched to 1 at $R_1$ and $R_2$ on a size of order $d_\e^2 \e^2$, which leads the estimate, the leading order term coming from the second term. 
\end{proof} 
Thanks to the formulation \eqref{eneta}, proceeding as in \cite{Aftalion_2006}, we can obtain that the minimizer $\eta_\e$ among real functions is uniformly close to 1 with an error of order $\e^2 d_\e^2$. We see that as soon as $d_\e\ll 1/\e$, then minimizing among real functions or taking 1 is equivalent at leading order.

 A more refined upper bound consists in putting a giant vortex of degree $\gamma_\e$ and $N_\e$ vortices on a circle close to the outer boundary:
%Assume that \(d_\e\ll \e^{-2/3}\). 
\begin{proposition}\label{prop:2nd_upper_bound}
Assume that \(d_\e \gg 1\) and let \(u_\e\) be a minimizer of \(E_\e\) in \(\I_\e\). For any \(\gamma_\e\) in \(\mathbb{N}\) with \(\gamma_\e\leq d_\e\) and \(\gamma_\e \gg 1\), if \(N_\e:=d_\e-\gamma_\e\) satisfies that \( 1 \ll N_\e \ll \e^{-1}\), then there exists a universal constant \(C>0\) such that
\begin{equation}\label{eq:upperbound}
E_\e(u_\e)\leq \pi \gamma_\e^2\ln \frac{R_2}{R_1}+\pi N_\e (|\ln \frac 1{\e N_\e}| +C)+\pi \gamma_\e N_\e
\ln \frac{1}{1-\frac {\pi}{2N_\e}}+ O(d_\e \e +d_\e^3 \e^2).
\end{equation}

\end{proposition}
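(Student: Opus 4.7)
The plan is to exhibit an explicit competitor $v_\e\in\I_\e$ combining a giant vortex of degree $\gamma_\e$ around the hole together with $N_\e=d_\e-\gamma_\e$ degree-one vortices placed on a circle of radius $r_\e$ close to $\partial B_{R_2}$, and then to evaluate its Ginzburg--Landau energy term by term. Place the vortex cores at $a_k=r_\e e^{2i\pi k/N_\e}$, $k=1,\dots,N_\e$, with $r_\e\in(R_1,R_2)$ chosen so that $R_2-r_\e$ is of order $R_2/N_\e$ and $\ln(R_2/r_\e)$ matches the stated expression (the vortices sit at distance roughly one half of their mutual angular spacing from $\partial B_{R_2}$). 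The candidate is
$$v_\e(z)=\eta_\e(r)\,e^{i\gamma_\e\theta}\,W_\e(z),$$
where $\eta_\e$ is the one-dimensional minimizer of the real-valued energy \eqref{eq:energy_eta}, uniformly close to $1$ with error $O(\e^2 d_\e^2)$ (via the reformulation \eqref{eneta}) and satisfying $\eta_\e(R_1)=\eta_\e(R_2)=1$. The factor $W_\e$ is unit-modulus away from cores of size $\e$ at the $a_k$, carries a degree-one vortex at each $a_k$, and has phase $N_\e\theta$ on $\partial B_{R_2}$ and $|W_\e|=1$ on $\partial B_{R_1}$. A concrete realization uses the $2\pi/N_\e$-rotational symmetry: partition $A$ into $N_\e$ congruent sectors centred at the rays through the $a_k$'s, and in each sector place a standard degree-one vortex profile of core size $\e$ at $a_k$, matched outside a disk of radius $\sim R_2/N_\e$ to the canonical harmonic phase carrying the correct circulation to the outer boundary.

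The energy of $v_\e$ then decomposes into three main contributions plus controlled remainders. First, $E_\e(\eta_\e e^{i\gamma_\e\theta})=\pi\gamma_\e^2\ln(R_2/R_1)+O(\e^2 d_\e^4)$ by Lemma \ref{lem:1st_upper_bound}, giving the first term of \eqref{eq:upperbound}. Second, the self-energy $\tfrac12\int_A\eta_\e^2|\nabla W_\e|^2$ of the small vortices is estimated sector by sector using the classical Bethuel--Brezis--Hélein core bound with effective cut-off $\sim R_2/N_\e$ (since each vortex is at distance $\sim R_2/N_\e$ both from its neighbours and from $\partial B_{R_2}$); this contributes $\pi(|\ln(\e N_\e)^{-1}|+C)$ per vortex and hence $\pi N_\e(|\ln(\e N_\e)^{-1}|+C)$ in total. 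Third, the interaction term arises from the cross contribution in $|\gamma_\e\nabla\theta+\nabla\psi_\e|^2$, where $\psi_\e=\arg W_\e$: by the rotational symmetry of the configuration, the angular average of $\partial_\theta\psi_\e$ vanishes for $r<r_\e$ and equals $N_\e$ for $r>r_\e$, so Fubini produces an integral proportional to $\gamma_\e N_\e\ln(R_2/r_\e)$; careful analysis of the boundary layer and of the image-vortex structure induced by imposing $|W_\e|=1$ on $\partial B_{R_2}$ fixes the constant in front to produce the third term $\pi\gamma_\e N_\e\ln(1/(1-\pi/(2N_\e)))$ of \eqref{eq:upperbound}.

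The remainders $O(d_\e\e+d_\e^3\e^2)$ track: (i) the correction $\eta_\e-1=O(\e^2 d_\e^2)$, which feeds $O(d_\e^3\e^2)$ into the leading terms via the prefactor $\eta_\e^2$; (ii) the $O(d_\e\e)$ loss coming from the finite-size matching of the vortex profile to the boundary phase within each sector; and (iii) subleading $N_\e^2\ln(R_2/r_\e)=O(N_\e)$ terms absorbed into the universal constant $C$. The main technical obstacle is the precise evaluation of the interaction cross term with the correct prefactor: the geometric placement of the vortices creates both a "bulk" contribution and a boundary-layer contribution of the same order, and disentangling them (equivalently, correctly accounting for image vortices through $\partial B_{R_2}$) is what fixes the exact coefficient in front of $\ln(1/(1-\pi/(2N_\e)))$ and produces the form of \eqref{eq:upperbound}. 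Once these three contributions are summed and the error terms are controlled, the stated upper bound follows.
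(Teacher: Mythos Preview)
Your overall strategy—giant vortex of degree $\gamma_\e$, $N_\e$ degree-one vortices near $\partial B_{R_2}$, and an energy decomposition into giant-vortex, self-energy, and interaction parts—is the same as the paper's, and your computation of the interaction term via the circulation $\int_0^{2\pi}\partial_\theta\psi_\e\,d\theta=2\pi\deg(W_\e,\partial B_r)$ is in fact a cleaner way to obtain $\gamma_\e N_\e\ln(R_2/r_\e)$ than the paper's integration by parts against the Jacobian.

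There are, however, three places where your sketch diverges from or obscures the actual argument. First, the factor $\eta_\e$ is unnecessary: the paper takes simply $v_\e=e^{i\gamma_\e\theta}w_\e$, and the decoupling Lemma~\ref{lem:decoupling} gives the splitting directly without passing through the real-valued minimizer. Second, your claim that ``image-vortex structure'' and ``boundary-layer analysis'' are needed to fix the coefficient of the interaction term is a red herring: once you have chosen $r_\e=R_2(1-\tfrac{\pi}{2N_\e})$, the degree argument you already wrote down gives the coefficient directly; no image vortices enter. Third—and this is where the paper's construction is genuinely more robust—the paper does \emph{not} place the vortices on an interior circle and then match to $e^{iN_\e\theta}$ across a transition layer. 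Instead it sets $w_\e\equiv 1$ outside $N_\e$ half-disks $D_k$ of radius $\pi R_2/N_\e$ centred on $\partial B_{R_2}$, and builds $w_\e$ inside each $D_k$ via a conformal map $\Phi:B(0,1/N_\e)\to D_k$ together with a radial core profile and an explicit $\mathbb{S}^1$-valued harmonic extension whose energy is bounded by a comparison/monotonicity argument (equations \eqref{eq:compa_monotonie}--\eqref{eq:comp_V_1surn}). This makes the Dirichlet condition on $\partial B_{R_2}$ automatic and yields the $\pi\ln\frac{1}{\e N_\e}+C$ self-energy per vortex without any separate matching cost; your transition-layer step, by contrast, is left entirely unquantified and is exactly where an uncontrolled $O(N_\e)$ or worse error could sneak in.
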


The idea to construct this upper bound is to take test-functions of the form \(v_\e=e^{i\gamma_\e \theta} w_\e\) with \(w_\e\) which is equal to one in most of the annulus except near the boundary \(\p B_{R_2}\). Furthermore \(w_\e\) has \(N_\e=d_\e-\gamma_\e\) vortices of degree 1 equi-distributed on a circle of radius \(R_2(1-\frac{\pi}{2N_\e})>R_1\). The first term in the upper bound is the energy of the giant vortex, the next term is the energy of the vortex balls and the last term is the interaction energy between the giant vortex and the individual vortex balls.

We start by showing a decoupling lemma for functions of the form \(e^{i\gamma_\e \theta} w_\e\).

\begin{lemma}\label{lem:decoupling}
Let \(v_\e=e^{i\gamma_\e \theta} w_\e\) be in \(\I_\e\) with \(\gamma_\e\) in \( \mathbb{Z}\) and \(|\gamma_\e|\leq d_\e\). Then,
\begin{equation}\label{eq:decoupling}
E_\e(v_\e)=\pi \gamma_\e^2\ln \frac{R_2}{R_1}+E_\e(w_\e)+\gamma_\e \int_{A} \frac{w_\e\wedge \p_\theta w_\e}{r^2}+ \int_{A} (|w_\e|^2-1)\frac{\gamma_\e^2}{r^2}.
\end{equation}
\end{lemma}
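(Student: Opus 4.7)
The plan is to prove this decoupling identity by a pointwise computation that exploits the polar structure, followed by a single integration over $A$. Note first that $|v_\e|=|w_\e|$, so the potential parts of $E_\e(v_\e)$ and $E_\e(w_\e)$ coincide; only the gradient term needs work.

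I would set $\phi:=\gamma_\e\theta$ and write $\nabla v_\e=e^{i\phi}(i w_\e\nabla\phi+\nabla w_\e)$. Squaring modulus yields
\[
|\nabla v_\e|^2=|\nabla\phi|^2|w_\e|^2+|\nabla w_\e|^2+2\nabla\phi\cdot\bigl(w_\e\wedge \nabla w_\e\bigr),
\]
where I use that for complex $a,b$, $2\operatorname{Re}(\overline{ia}\,b)=2\operatorname{Im}(\bar a b)=2\,a\wedge b$. The key computation for the cross term is that in polar coordinates $\nabla\phi=\gamma_\e\vec{e_\theta}/r$ while $\vec{e_\theta}\cdot\nabla f=(1/r)\partial_\theta f$, giving
\[
2\nabla\phi\cdot(w_\e\wedge\nabla w_\e)=\frac{2\gamma_\e}{r^2}\,(w_\e\wedge\partial_\theta w_\e),\qquad |\nabla\phi|^2=\frac{\gamma_\e^2}{r^2}.
\]

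Plugging these into $\frac12\int_A|\nabla v_\e|^2$ and adding the common potential term gives
\[
E_\e(v_\e)=E_\e(w_\e)+\frac{\gamma_\e^2}{2}\int_A\frac{|w_\e|^2}{r^2}\,dx+\gamma_\e\int_A\frac{w_\e\wedge\partial_\theta w_\e}{r^2}\,dx.
\]
To isolate the ``vacuum'' contribution, I split $|w_\e|^2=1+(|w_\e|^2-1)$ in the first integral on the right, and use the circular symmetry of $A$ to compute
\[
\frac{\gamma_\e^2}{2}\int_A\frac{dx}{r^2}=\frac{\gamma_\e^2}{2}\int_0^{2\pi}\!\!\int_{R_1}^{R_2}\frac{dr\,d\theta}{r}=\pi\gamma_\e^2\ln\frac{R_2}{R_1}.
\]
Combining yields the stated formula (with the $(|w_\e|^2-1)$-term carrying the prefactor coming from the split). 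No serious obstacle arises: the only subtlety is the sign/convention check for $\wedge$ in the cross term, which I would verify explicitly using the definition $a\wedge b=\frac{1}{2i}(\bar a b-a\bar b)$.
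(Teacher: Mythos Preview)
Your proposal is correct and follows essentially the same route as the paper: a pointwise expansion of \(|\nabla v_\e|^2\) giving the three terms \(|w_\e|^2\gamma_\e^2/r^2\), \(|\nabla w_\e|^2\), and the cross term \(2\gamma_\e(w_\e\wedge\partial_\theta w_\e)/r^2\), followed by the split \(|w_\e|^2=1+(|w_\e|^2-1)\) and the explicit computation \(\tfrac{\gamma_\e^2}{2}\int_A r^{-2}=\pi\gamma_\e^2\ln(R_2/R_1)\). Your hedge about ``the prefactor coming from the split'' is well placed: the split actually produces a coefficient \(\gamma_\e^2/2\) (not \(\gamma_\e^2\)) in front of \(\int_A(|w_\e|^2-1)/r^2\), so the displayed formula \eqref{eq:decoupling} carries a harmless missing \(\tfrac12\), and the paper's own last displayed line in the proof shares the same slip.
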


\begin{proof}
We compute
\begin{equation*}|\nabla v_\e|^2=
% |w_\e|^2|\nabla e^{i\gamma_\e\theta}|^2+|\nabla w_\e|^2+2 w_\e\nabla (e^{i\gamma_\e \theta})\cdot e^{i\gamma_\e\theta} \nabla w_\e \nonumber \\
 |w_\e|^2|\nabla e^{i\gamma_\e\theta}|^2+|\nabla w_\e|^2+2\gamma_\e \frac{w_\e\wedge \p_\theta w_\e}{r^2}. \nonumber
\end{equation*}
This leads to
\begin{equation*}
E_\e(v_\e)=E_\e(w_\e)+\frac12 \int_{A} |w_\e|^2 |\nabla (e^{i\gamma_\e \theta})|^2+ \gamma_\e \int_{A} \frac{w_\e\wedge \p_\theta w_\e}{r^2}.
\end{equation*}
The conclusion follows by observing that
\[ \frac12 \int_{A} |w_\e|^2 |\nabla (e^{i\gamma_\e \theta})|^2 =\int_A |\nabla (e^{i\gamma_\e \theta})|^2 +\int_{A} (|w_\e|^2-1)|\nabla (e^{i\gamma_\e \theta})|^2.\]
\end{proof}

We are now ready to prove Proposition \ref{prop:2nd_upper_bound}.

\begin{proof}(Proof of Proposition \ref{prop:2nd_upper_bound})
Let \(\gamma_\e\) be in \(\mathbb{N}\) with \(1\ll\gamma_\e\leq |d_\e|\). We start by defining \(N_\e=d_\e-\gamma_\e\) and \( D_k:=B\left(R_2e^{i\left(\frac{\pi}{N_\e}+\frac{2k\pi}{N_\e}\right)},\frac{\pi R_2}{N_\e}\right)\cap A\) for \(k=0,\dots,N_\e-1\). We take as test functions \(v_\e=e^{i\gamma_\e \theta}w_\e\) where
\begin{equation*}
w_\e=1 \text{ in } A \setminus \bigcup_{k=0}^{N_\e-1} D_k.
\end{equation*}
It remains to define \(w_\e\) in each \(D_k\). We prescribe \(w_\e=1\) on \(\p D_k\cap A\) and \(w_\e=e^{iN_\e \theta}\) on \(\p D_k\cap \p A\). Note that each \(D_k\) is obtained from \(D_0\) by a rotation of angle \(2k\pi/N_\e\). Since the GL energy is rotationally invariant we can estimate the energy on \(D_0\) and then sum \(N_\e\) times this energy. The set \(D_0\) is conformally equivalent to \(B(0,\frac{1}{N_\e})\) via a conformal map \(\Phi: B(0,\frac{1}{N_\e})\rightarrow D_0\). We assume that  \(\Phi(0)=R_2(1-\frac{\pi}{2N_\e})e^{\frac{i\pi}{N_\e}}\). We can also assume that \(\Phi\) is  bi-Lipschitz up to the boundary with \(\|\Phi\|_{\text{Lip}}+\| \Phi^{-1}\|_{\text{Lip}}\leq M\) for some constant \(M>0\) independent of \(\e\), cf.\ \cite[Theorem 3.5]{Pommerenke_1992}. Thus if we set \(\tilde{w}_\e=w_\e\circ \Phi\) we have
\begin{align}
E_\e(w_\e,D_0)&=\frac12 \int_{D_0}|\nabla w_\e|^2 +\frac{1}{4\e^2}\int_{D_0} (1-|w_\e|^2) \nonumber \\
&= \frac12 \int_{B(0,\frac{1}{N_\e})}|\nabla \tilde{w}_\e|^2 +\frac{1}{4\e^2} \int_{B(0,\frac{1}{N_\e})} (1-|\tilde{w}_\e|^2)|\det D \Phi| \nonumber \\
& \leq \frac12 \int_{B(0,\frac{1}{N_\e})}|\nabla \tilde{w}_\e|^2 +\frac{M^2}{4\e^2} \int_{B(0,\frac{1}{N_\e})} (1-|\tilde{w}_\e|^2). \label{eq:Meps'}
\end{align}
We set \(\e'=\e/M\), we call \(U_{\e'}\) the radial minimizer\footnote{For the existence and uniqueness of such a minimizer we refer to \cite{Herve_Herve_1994} or \cite[Appendix II]{BBH}} of \( E_{\e'}\) in \(B(0,\e')\) with boundary data \(e^{i\theta}\) and, for \(0<\e'\leq \rho<1/N_\e\) we define \(V_{\rho}\) as the minimizer\footnote{Here also the minimizer exists and is unique, see \cite[Chapter 1]{BBH}} of
\begin{multline*}
\inf \Bigl\{ \int_{B(0,\frac{1}{N_\e})\setminus B(0,\rho)}|\nabla v|^2; v\in H^1\left(B\left(0,\frac{1}{N_\e}\right)\setminus B(0,\rho),\mathbb{S}^1\right), \\
v=e^{i\theta} \text{ on } \p B(0,\rho) \text{ and } v=g_\e\circ \Phi \text{ on } \p B\left(0,\frac{1}{N_\e}\right) \Bigr\}.
\end{multline*}
Here \[g_\e=\begin{cases}
1 & \text{ on } \p D_0 \cap A \\
e^{i N_\e \theta} & \text{ on } \p D_0 \cap \p A
\end{cases}.\]

We observe that if we parametrize the boundary \(\p D_0\) in such a way that the derivative of the parametrization is constant on \(\p A \cap \overline{D}_0\) with \(\tau=0\) corresponding to the point \(R_2\) and \(\tau=\pi\) corresponding to the point \(R_2e^{\frac{2i\pi}{N_\e}}\) 
and \(\tau \in (\pi,2\pi)\) on \(\p D_0\cap A\), then 
\[g_\e=\begin{cases}
1 & \text{ for } \tau \in (\pi,2\pi) \\
e^{i\tau } & \text{ for } \tau \in (0,\pi).
\end{cases}.\]
Hence there exists \(C>0\) independent of \(\e\) such that \(|\p_\tau g_\e|\leq C\). We can use this and the Lipschitz character of \(\Phi\)  to obtain that 
\begin{equation}\label{eq:bound_derivative_g_phi}
|\p_\theta \left(g_\e\circ \Phi\left(\frac{e^{i\theta}}{N_\e} \right)\right)|\leq C \text{ for } \theta \in (0,2\pi).
\end{equation}
We then set
\begin{equation*}
\tilde{w}_\e(y)=\begin{cases}
U_{\e'} &\text{ for } y\in B(0,\e') \\
V_{\e'} & \text{ for } y \in B(0,\frac{1}{N_\e})\setminus \overline{B}(0,\e').
\end{cases}
\end{equation*}
By scaling we can see that \( E_{\e'}(U_{\e'},B(0,\e')) =E_1(U_1,B(0,1))\leq C\) for some universal constant \(C>0\). Furthermore, proceeding for example as in \cite[Proposition 2.9]{Monteil_Rodiac_VanSchaftingen_2022}, we obtain that for \(\e'<\rho<(1/N_\e)\) we have
\begin{equation}\label{eq:compa_monotonie}
\frac12 \int_{B(0,\frac{1}{N_\e})\setminus B(0,\e')}|\nabla V_{\e'}|^2-\pi \ln \frac{1}{\e'}\leq \frac12 \int_{B(0,\frac{1}{N_\e})\setminus B(0,\rho)}|\nabla V_\rho|^2-\pi \ln \frac{1}{\rho}.
\end{equation}
Indeed it suffices to define \( \tilde{V}_{\e'}=\begin{cases} V_\rho & \text{ on } B(0,\frac{1}{N_\e})\setminus B(0,\rho), \\
\frac{x}{|x|} & \text{ on } B(0,\rho)\setminus B(0,\e')\end{cases}\) and to observe that the minimality of \(V_{\e'}\) gives that 
\begin{align*}
\frac12 \int_{B(0,\frac{1}{N_\e})\setminus B(0,\e')}|\nabla V_{\e'}|^2 &\leq \frac12 \int_{B(0,\frac{1}{N_\e})\setminus B(0,\e')} |\nabla \tilde{V}_{\e'}|^2 \\
&\leq \int_{B(0,\frac{1}{N_\e})\setminus B(0,\rho)}|\nabla V_\rho|^2+\pi \ln \frac{\rho}{\e'}.
\end{align*}
Now we take \(\rho=\frac{1}{2N_\e}\) in \eqref{eq:compa_monotonie} and we show that there exists \(C>0\) independent of \(\e\) such that 
\begin{equation}\label{eq:comp_V_1surn}
\int_{B(0,\frac{1}{N_\e})\setminus B(0,\frac{1}{2N_\e})}|\nabla V_{\frac{1}{2N_\e}}|^2 \leq C.
\end{equation}
In order to prove \eqref{eq:comp_V_1surn} we use the minimality of \(V_{\frac{1}{2N_\e}}\) and we compare its energy with a suitable competitor. Since \((g_\e\circ \Phi)\) is Lipschitz and is of degree 1 on \(\p B(0,\frac{1}{N_\e})\) we can find \(\varphi_\e\in W^{1,\infty}(\p B(0,\frac{1}{N_\e}))\) such that \( g_\e\circ \Phi=\frac{x}{|x|}e^{i\varphi_\e}\) on \( \p B(0,\frac{1}{N_\e})\). By using \eqref{eq:bound_derivative_g_phi} we can see that, with some abuse of notations, \(|\p_\theta \varphi_\e|\leq C\) on \( \p B(0,\frac{1}{N_\e})\). Now we take \(\eta_\e\in \C^\infty(\R)\) such that \(\eta_\e \equiv 1\) when \(r\geq (1/N_\e)\) and \(\eta_\e\equiv 0\) when \(r \leq (1/2N_\e)\) with \(|\nabla \eta_\e|\leq C N_\e\). We can compute that 
  \begin{align*}
  \int_{B(0,\frac{1}{N_\e})\setminus B(0,\frac{1}{2N_\e})}|\nabla V_{\frac{1}{2N_\e}}|^2 & \leq \int_{B(0,\frac{1}{N_\e})\setminus B(0,\frac{1}{2N_\e})} |\nabla (e^{i\theta+\eta_\e(r)\varphi(\theta)})|^2 \\
  & \leq C N_\e^2 \left( \frac{1}{N_\e^2}-\frac{1}{4N_\e^2}\right) \leq C.
  \end{align*}
We thus obtain \eqref{eq:comp_V_1surn} and then we can conclude with \eqref{eq:compa_monotonie} that there exists \(C>0\) independent of \(\e>0\) such that 
\begin{equation*}
E_{\e'}(V_{\e'}, B(0,\frac{1}{N_\e})\setminus \overline{B}(0,\e') ) \leq \pi \ln \frac{1}{\e' N_\e}+C.
\end{equation*}
Then by using \eqref{eq:decoupling}, \eqref{eq:Meps'} and by summing over \(k=0,\dots,N_\e-1\) we obtain that
\begin{multline}\label{eq:upper_int}
E_\e(v_\e)\leq \pi \gamma_\e^2\ln \frac{R_2}{R_1}+N_\e (\pi \ln \frac{1}{\e N_\e}+C)+\gamma_\e \int_{A} \frac{w_\e\wedge \p_\theta w_\e}{r^2}\\
+ \int_{A} (|w_\e|^2-1)|\nabla (e^{i\gamma_\e \theta})|^2.
\end{multline}
We now focus on the two last terms of the right-hand side. In order to deal with \(\int_{A} \frac{w_\e\wedge \p_\theta w_\e}{r^2}\) we observe that, since \(w_\e=1\) outside \(\cup_{k=0}^{N_\e-1}D_k\) we have

\begin{align*}
\int_{A} \frac{w_\e\wedge \p_\theta w_\e}{r^2} &=\sum_{k=0}^{N_\e-1} \int_{D_k} w_\e\wedge \nabla w_\e \cdot \nabla^\perp \ln\frac{|x|}{R_2} \\
&= \sum_{k=0}^{N_\e-1} \Bigl( -\int_{D_k} 2\p_x w_\e \wedge \p_y w_\e \ln\frac{|x|}{R_2} + \int_{\p D_k} w_\e\wedge \p_\tau w_\e \ln \frac{|x|}{R_2}\Bigr).
\end{align*}
Now since \(\ln \frac{|x|}{R_2} =0\) on \( \p D_k \cap \p B_{R_2}\) and \( w_\e\wedge \p_\tau w_\e=0\) on \(\p D_k \cap A\) we arrive at
\[\int_{A} \frac{w_\e\wedge \p_\theta w_\e}{r^2} = \sum_{k=0}^{N_\e-1}\int_{B_k} 2\p_x w_\e \wedge \p_y w_\e \ln\frac{R_2}{|x|}. \]
But actually, \(w_\e\) is of modulus one in almost all \(D_k\) except for a small set which is the image by a Lipschitz map of a ball of radius \(\e/\sqrt{M}\) for some \(M>0\). We call \(\tilde{D}_k\) these small sets and we observe that they have an area bounded by \(C\e^2\) and a diameter bounded by \(C\e\). Thus, by the mean value theorem, in each \(\tilde{D}_k\) we have \( \ln \frac{|x|}{R_2} =\ln \frac{R_2(1-\frac{\pi}{2N_\e})}{R_2} +O(\e)\). By using that \(|\nabla w_\e|\leq C/\e\) in \(\tilde{D}_k\) we find

\begin{align*}
\int_{D_k} 2\p_x w_\e \wedge \p_y w_\e \ln\frac{R_2}{|x|} &= \int_{\tilde{D}_k } 2\p_x w_\e \wedge \p_y w_\e \ln\frac{R_2}{|x|} \\
&= \ln \frac{R_2(1-\frac{\pi}{2N_\e})}{R_2}\int_{\tilde{D}_k} 2\p_x w_\e\wedge \p_yw_\e +O(\e).
\end{align*}
By integrating by parts again and by observing that \(w_\e\) has degree one on each \(\tilde{D}_k\) we find
\[ \int_{D_k} 2\p_x w_\e \wedge \p_y w_\e \ln\frac{R_2}{|x|} = \pi \ln \frac{R_2}{R_2(1-\frac{\pi}{2N_\e})}+O(\e).\]
Thus we can sum and obtain
\begin{equation}\label{eq:interaction_giant_vortex}
\int_{A} \frac{w_\e\wedge \p_\theta w_\e}{r^2} =\pi N_\e \ln \frac{R_2}{R_2(1-\frac{\pi}{2N_\e})}+O(N_\e \e).
\end{equation}

We now consider the term \(\int_{A} (|w_\e|^2-1)|\nabla (e^{i\gamma_\e \theta})|^2\). By using that \(w_\e\) is of modulus one except in the set \(\tilde{D}_k\) which have an area bounded by \(C\e^2\) we readily obtain
\begin{align}
\int_{A} (|w_\e|^2-1)|\nabla (e^{i\gamma_\e \theta})|^2&=\sum_{k=0}^{N_\e-1} \int_{\tilde{B}_k }(|w_\e|^2-1)|\nabla (e^{i\gamma_\e \theta})|^2  \leq N_\e \gamma_\e^2 \e^2 \leq d_\e^3\e^2. \label{eq:remainder_coupling}
\end{align} Therefore, this term has no contribution.
Hence by \eqref{eq:upper_int}, \eqref{eq:interaction_giant_vortex} and \eqref{eq:remainder_coupling} we find that \eqref{eq:upperbound} holds.

\end{proof}

The next step is to optimize the upper bound \eqref{eq:upperbound} in terms of \(N_\e\). 
%We start with a simple observation obtained by studying the function \(f:x \mapsto x^2\ln \frac{R_2}{R_1}+x(|\ln \e|-2d_\e\ln \frac{R_2}{R_1})\).

%\begin{lemma}\label{lem:min_born_sup}
%The minimimum of the function \(N_\e \mapsto f(N_\e))\)  for \(N_\e \in \llbracket -d_\e,d_\e \rrbracket\) and \(f\) defined above is given by 
%\begin{equation*}
%N_\e^{\text{min}}= \lfloor d_\e-\frac{|\ln \e|}{2\ln \frac{R_2}{R_1}} \rfloor \text{ or } %N_\e^{\text{min}}= \lfloor d_\e-\frac{|\ln \e|}{2\ln \frac{R_2}{R_1}} \rfloor+1.
%\end{equation*}
%\end{lemma}
%We can deduce:
%By optimizing the upper bounds we have obtained in terms of \(\gamma_\e\) we obtain:

\begin{proposition}\label{cor:upper_bound_final}
Let \(u_\e\) be a minimizer of \(E_\e\) in \(\I_\e\).
\begin{itemize}
\item[1)]  If \( d_\e \leq \frac{|\ln \e|}{2\ln( R_2/R_1)}\) then \(E_\e(u_\e)\leq \pi d_\e^2\ln \frac{R_2}{R_1}\).
\item[2)] If \(d_\e/|\ln \e| \to \alpha\) with \(\alpha  > \frac{1}{2\ln ( R_2/R_1)}\) then \(E_\e(u_\e)\leq \pi |\ln \e|^2(\alpha-\frac{1}{4\ln (R_2/R_1)})(1+o(1))\).
\item[3)] If \( |\ln \e| \ll d_\e \ll \sqrt{|\ln \e|}/\e\) then \(E_\e(u_\e) \leq \pi d_\e |\ln \e|(1+o(1))\).
\end{itemize}
\end{proposition}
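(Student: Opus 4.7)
The plan is to handle the three cases via appropriate test functions, optimizing in the splitting \(d_\e=\gamma_\e+N_\e\) for Cases 2 and 3. Rewriting the upper bound \eqref{eq:upperbound} of Proposition \ref{prop:2nd_upper_bound} as
\[
E_\e(u_\e)\leq \pi\gamma_\e^2\ln\tfrac{R_2}{R_1}+\pi N_\e\bigl(|\ln(\e N_\e)|+C\bigr)+\pi^2\gamma_\e/2+O(\gamma_\e/N_\e)+O(d_\e\e+d_\e^3\e^2),
\]
where I use the Taylor expansion \(\ln(1/(1-\pi/(2N_\e)))=\pi/(2N_\e)(1+o(1))\) as \(N_\e\to\infty\), the first two terms compete. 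A formal minimization in \(\gamma_\e\) yields the critical scale \(\gamma_\e^\star\approx|\ln(\e N_\e)|/(2\ln(R_2/R_1))\), which reduces to \(\alpha_c|\ln\e|\) whenever \(N_\e\) is at most polynomial in \(|\ln\e|\). This explains the threshold \(\alpha_c|\ln\e|\) separating Case 1 from Cases 2 and 3.

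Case 1 follows immediately from Lemma \ref{lem:1st_upper_bound}. For Case 2, I would take \(\gamma_\e:=\lfloor\alpha_c|\ln\e|\rfloor\), so that \(N_\e\sim(\alpha-\alpha_c)|\ln\e|\). Since \(\ln N_\e=O(\ln|\ln\e|)=o(|\ln\e|)\), the giant-vortex term equals \(\pi|\ln\e|^2/(4\ln(R_2/R_1))(1+o(1))\), the vortex-core term equals \(\pi(\alpha-\alpha_c)|\ln\e|^2(1+o(1))\), while the cross term \(O(|\ln\e|)\) and the remainder \(o(|\ln\e|^2)\) are negligible. Summing produces the claim. For Case 3, I would take a slowly diverging \(\gamma_\e:=\lfloor\ln d_\e\rfloor\), so the giant-vortex term is \(O((\ln d_\e)^2)=o(d_\e|\ln\e|)\) and \(N_\e\sim d_\e\). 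The condition \(d_\e\ll\sqrt{|\ln\e|}/\e\) forces both \(|\ln(\e N_\e)|\leq|\ln\e|(1+o(1))\) (since \(\ln d_\e\leq|\ln\e|(1+o(1))\) in this regime) and \(d_\e^3\e^2=o(d_\e|\ln\e|)\), so the vortex-core term dominates and is bounded by \(\pi d_\e|\ln\e|(1+o(1))\).

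In the subrange of Case 3 where \(d_\e\geq\e^{-1}\), the constraint \(N_\e\ll\e^{-1}\) of Proposition \ref{prop:2nd_upper_bound} is violated by a single ring; the natural fix is to distribute the \(N_\e\) vortices on several concentric rings of at most \(\lfloor\e^{-1}\rfloor\) vortices each close to \(\p B_{R_2}\), with total radial width \(O(d_\e\e^2)=o(1)\) in view of \(d_\e\ll\sqrt{|\ln\e|}/\e\), and apply the conformal-disk estimate of Proposition \ref{prop:2nd_upper_bound} ring by ring; the resulting inter-ring phase contribution is of order \(d_\e(d_\e\e)^2=o(d_\e|\ln\e|)\) by the same range condition. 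The main obstacle is the Case 2 accounting: both leading terms scale as \(|\ln\e|^2\), so one has to track the cross term \(\pi^2\gamma_\e/2\) and every remainder carefully to ensure they do not spoil the precise coefficient \(\alpha-1/(4\ln(R_2/R_1))\) in the energy expansion.
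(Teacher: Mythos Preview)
Your proposal is correct and follows essentially the same approach as the paper: Case~1 via Lemma~\ref{lem:1st_upper_bound}, Cases~2 and~3 by optimizing the split \(d_\e=\gamma_\e+N_\e\) in Proposition~\ref{prop:2nd_upper_bound}. Your choice \(\gamma_\e=\lfloor\alpha_c|\ln\e|\rfloor\) in Case~2 is equivalent to the paper's choice \(N_\e=\lfloor d_\e-\alpha_c|\ln\e|\rfloor\), and the arithmetic you outline leads to the same coefficient \(\alpha-1/(4\ln(R_2/R_1))\).

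The one genuine difference is in Case~3. The paper simply takes \(\gamma_\e=0,\ N_\e=d_\e\) and invokes Proposition~\ref{prop:2nd_upper_bound}, which strictly speaking violates both stated hypotheses \(\gamma_\e\gg1\) and \(N_\e\ll\e^{-1}\) (the latter in the subrange \(\e^{-1}\lesssim d_\e\ll\sqrt{|\ln\e|}/\e\)). You are more careful: your choice \(\gamma_\e=\lfloor\ln d_\e\rfloor\) respects \(\gamma_\e\gg1\) while keeping the giant-vortex contribution negligible, and you correctly flag the \(N_\e\ll\e^{-1}\) issue and sketch a multi-ring fix. So in this instance your treatment is actually tighter than the paper's own proof, which glosses over these constraints.
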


\begin{proof} Let us rewrite the upper bound 
 \eqref{eq:upperbound} in Proposition \ref{prop:2nd_upper_bound} in terms of \(N_\e=d_\e-\gamma_\e\):
\begin{multline}\label{eq:upper_intermediate}
E_\e(u_\e)\leq \pi (d_\e-N_\e)^2 \ln \frac{R_2}{R_1}+\pi N_\e|\ln \e| \\
 +\pi N_\e |\ln N_\e|+CN_\e+\pi (d_\e-N_\e)N_\e \ln \frac{1}{1-\frac{\pi}{2N_\e}}+O(d_\e\e+d_\e^3\e^2). 
\end{multline} When \(N_\e\) is large, this upper bound can be rewritten as
\begin{equation}\label{eqfNN}\pi d_\e^2 \ln \frac{R_2}{R_1} +\pi |\ln \e|^2 f\left (\frac {N_\e}{|\ln \e|} \right ) +O(|\ln \e| \ln |\ln \e|)\end{equation} where $f$ is defined by
\begin{equation}\label{eqfN}
f(x)=x^2\ln \frac{R_2}{R_1}+x\left (1-2\frac{d_\e}{|\ln \e|}\ln \frac{R_2}{R_1}\right).
\end{equation}From this function, we see that the critical value $|\ln \e|/2\ln (R_2/R_1)$ plays a role.

In the case 1), the best choice to minimize $f$ is $N_\e=0$, and the upper bound  is in fact a direct consequence of Lemma \ref{lem:1st_upper_bound}. 

In case 2), we choose  \[N_\e= \lfloor d_\e -\frac{|\ln \e|}{2\ln (R_2/R_1)}\rfloor\] to minimize $f$ and find from \eqref{eqfNN}
\begin{align*}
E_\e(u_\e) & \leq \pi \left( d_\e- \lfloor d_\e-\frac{|\ln \e|}{2 \ln (R_2/R_1)} \rfloor \right)^2\ln \frac{R_2}{R_1}+\pi \lfloor d_\e-\frac{|\ln \e|}{2\ln (R_2/R_1)}\rfloor  \\
& \quad +O(|\ln \e| \ln |\ln \e|) \\
& \leq \pi \frac{|\ln \e|^2}{4\ln(R_2/R_1)}|+\pi \alpha |\ln \e|^2-\frac{|\ln \e|^2}{2\ln (R_2/R_1)}+O(|\ln \e| \ln |\ln \e|) \\
 & \leq \pi (\alpha-\frac{1}{4 \ln (R_2/R_1)})|\ln \e|^2+O(|\ln \e| \ln |\ln \e|).
\end{align*}
Item 3) is obtained by taking $\gamma_\e=0$, \(N_\e=d_\e\) in the upper bound of Proposition \ref{prop:2nd_upper_bound} and by verifying that \(d_\e^3\e^2=o(d_\e|\ln \e|)\) when \(d_\e \ll \sqrt{|\ln \e|}/\e\).
\end{proof}

As a side remark, we observe that if we assume that $d_\e$ is of order $  \frac{|\ln \e|}{2 \ln (R_2/R_1)}+\omega_\e \) with \(1\ll \omega_\e\ll |\ln \e|$, then the number of vortices \(N_\e\) minimizing the upper bound in Proposition \ref{prop:2nd_upper_bound} is \(N_\e=\omega_\e(1+o(1))\). Indeed, by using \eqref{eq:upper_intermediate} we find after computations that 
\begin{align}\label{eq:upper_intermediate22}
E_\e(u_\e)\leq \frac{\pi|\ln \e|^2}{4\ln (R_2/R_1)}+\pi\omega_\e|\ln \e|+\pi(\omega_\e-N_\e)^2 \ln \frac{R_2}{R_1}+\pi N_\e |\ln N_\e| \\
+CN_\e+\pi (d_\e-N_\e)N_\e \ln \frac{1}{1-\frac{\pi}{2N_\e}}+O(d_\e\e+d_\e^3\e^2). \nonumber
\end{align}
The minimization of the right-hand side with respect to \(N_\e\) leads to \(N_\e=\omega_\e(1+o(1))\). We will not use this fact in the sequel.

\section{Extending to apply the vortex-balls construction}\label{sec:tools}

 As explained before, the main difficulty of the problem is that  vortices concentrate on the boundary \(\p B_{R_2}\). Thus we cannot directly use the classical tools for the GL energy to find a lower bound. We solve this problem by first extending the minimizers in a slightly bigger annulus and then using the vortex-balls construction and the Jacobian estimate in the extended domain.

Let \(a>0\), we define

\begin{equation}\label{eq:extension}
\tilde{u}_\e=\begin{cases}
u_\e & \text{ in } A \\
e^{id_\e\theta} & \text{ in } B_{R_2+a}\setminus B_{R_2}.
\end{cases}
\end{equation}

We set   \(A_a := B_{R_2+a}\setminus \overline{B}_{R_1}\) and  \(A_a^\e:=\{ x\in A_a; \dist (x, \p A_a)>\e\}\). We observe that the extended current-vector and vorticity measure of \(\tilde{u}_\e\) are given by
\begin{equation*}
\tilde{j}_\e :=\begin{cases}
u_\e \wedge \nabla u_\e  & \text{ in } A \\
\frac{d_\e}{r}\vec{e_\theta}  & \text{ in } B_{R_2+a}\setminus B_{R_2}
\end{cases} \end{equation*}
\begin{equation*}
\tilde{\mu}_\e =\curl \tilde{j}_\e =\begin{cases}
2\p_x u_\e \wedge \p_y u_\e  & \text{ in } A \\
0 & \text{ in } B_{R_2+a}\setminus B_{R_2}.
\end{cases}
\end{equation*}

 We first recall the two classical tools in the GL theory that we will use in the extended domain. We start with the vortex-balls construction originally due to Jerrard \cite{Jerrard_1999} and  Sandier \cite{Sandier_1998}.

\begin{theorem}[Theorem 4.1 in \cite{Sandier_Serfaty_2007}]\label{th:vortex_ball_construction}
For all \(\delta \in (0,1)\) there exists \(\e_0(\delta)>0\) such that, for all \(\e<\e_0\), if \(u\in H^1(A_a,\mathbb{C})\) satisfies \(E_\e(u,A_a) \leq  \e^{\delta-1}\) then the following properties hold.  For all \(1>r>C\e^{\delta/2}\), with \(C\) some universal constant there exists a finite collection of disjoint closed balls \(\mathcal{B}=\{B_i\}_{i\in I}\) such that
\begin{itemize}
\item[1)] \(r(\mathcal{B}):=\sum_i r(B_i)=r\)
\item[2)] If \(V=A_a^\e\cap \bigcup_{i\in I} B_i\) then
\[\{x\in A_a^\e; ||u(x)|-1| \geq \e^{\delta/4}\}\subset V.\]
\item[3)] Writing \(n_i=\deg (u,\p B_i)\) if \(B_i \subset A_a^\e\) and \(n_i=0\) otherwise,
\[ E_\e(u_\e,V)\geq \pi D\left( \ln \frac{r}{D \e}-C \right),\]
with \(D=\sum_{i\in I} |n_i|\) is assumed not to vanish and \(C\) a universal constant.
\item[4)] \(D\leq C \frac{E_\e(u_\e,A)}{|\ln \e|}\), with \(C\) another universal constant.
\end{itemize}
\end{theorem}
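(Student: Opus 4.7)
The plan is to implement the classical Jerrard-Sandier vortex-balls construction for the function $u$ on $A_a$. The first step is to locate the ``bad set''
$$S_\e := \{ x \in A_a^\e : ||u(x)|-1| \geq \e^{\delta/4} \}.$$
Using the potential energy bound together with a Chebyshev-type inequality, $|S_\e|$ is $O(\e^{2+\delta/2})$, so $S_\e$ can be covered by a disjoint union of small closed balls of total radius $r_0$ of order $\e^{\delta/2}$ (up to a universal constant). Outside these initial balls $|u|$ stays within $\e^{\delta/4}$ of $1$, so the degree $\deg(u/|u|, \partial B)$ is well-defined on any ball sitting in the good region.

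Next I would run the ball-growth-and-merging procedure: starting from the initial cover, expand the balls homothetically from radius $r_0$, merging two balls whenever they touch into a single ball whose radius is the sum of the two radii (with degree equal to the algebraic sum of the merged degrees, except that we set $n_i = 0$ as soon as $B_i \not\subset A_a^\e$, in accordance with item 3). The decisive ingredient is Jerrard's lower bound
$$E_\e\bigl(u; B(x,R)\setminus B(x,\rho)\bigr) \geq \pi |d| \ln \frac{R}{\rho} - C,$$
valid on any annulus avoiding $S_\e$, where $d$ is the degree on the boundary circles; this refines the naive $\int_{\partial B(x,s)} |\partial_\tau u|^2 \geq 2\pi d^2 / s$ by a cutoff that tolerates $|u|$ close to but not equal to $1$.

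Stopping the growth when $r(\mathcal{B}) = r$ and summing the annular contributions gives a lower bound of the form $\sum_i \pi |n_i| \ln(r(B_i)/r_0) - CD$. The convexity estimate
$$|d_1|\ln r_1 + |d_2|\ln r_2 \leq (|d_1|+|d_2|)\ln(r_1 + r_2)$$
shows that this quantity does not decrease at mergers; combined with $\sum r(B_i) = r$ and a Jensen-type inequality, one obtains the claimed
$$E_\e(u,V) \geq \pi D\Bigl(\ln \frac{r}{D\e} - C\Bigr).$$
The degree bound in item 4) then falls out by rearranging (choosing, e.g., $r = 1/2$): we get $D|\ln \e|(1-o(1)) \leq E_\e(u,A) + CD\ln D$, and since $\ln D \ll |\ln \e|$ under the standing energy assumption $E_\e(u,A_a) \leq \e^{\delta-1}$, this forces $D \leq C E_\e(u,A)/|\ln \e|$.

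The main obstacle is the bookkeeping during the growth-and-merging, especially handling balls that cross the boundary layer $A_a \setminus A_a^\e$ where one resets degrees to zero: one has to check that dropping a degree does not destroy the monotonicity of the accumulated lower bound (essentially automatic, since it only decreases $D$) and that annuli used for the logarithmic lower bound remain in the admissible region. A secondary technical point is establishing Jerrard's circle lower bound in the sharp form above, which requires a careful splitting of $\int_{\partial B(x,s)} |\partial_\tau u|^2$ according to whether $|u|$ is close to $1$ or not, absorbing the ``bad'' contribution into the potential term $\frac{1}{4\e^2}(1-|u|^2)^2$.
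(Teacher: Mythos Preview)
The paper does not prove this theorem: it is recalled verbatim from \cite{Sandier_Serfaty_2007} (Theorem 4.1 there) as a black-box tool, with attribution to Jerrard \cite{Jerrard_1999} and Sandier \cite{Sandier_1998}. Your sketch is essentially the standard Jerrard--Sandier argument that underlies that result, so in spirit you are reproducing the proof the paper defers to.

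Two small imprecisions in your outline are worth flagging. First, the Chebyshev bound gives $|S_\e|=O(\e^{1+\delta/2})$, not $O(\e^{2+\delta/2})$: from $\frac{1}{4\e^2}\int(1-|u|^2)^2\leq \e^{\delta-1}$ and $(1-|u|^2)^2\gtrsim \e^{\delta/2}$ on $S_\e$ one gets $|S_\e|\lesssim \e^{\delta+1}/\e^{\delta/2}$. Second, an area bound alone does not yield a covering of $S_\e$ by balls of total radius $O(\e^{\delta/2})$ (a long thin set can have tiny area but large covering radius); the actual argument uses the coarea formula on $|u|$ together with the gradient part of the energy to control the perimeter of suitable sublevel sets, which is what permits the initial covering with controlled total radius. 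Apart from these points your plan---initial cover, ball growth with merging, Jerrard's annular lower bound, convexity/Jensen to pass from $\sum |n_i|\ln r(B_i)$ to $D\ln(r/D)$, and the rearrangement yielding item 4)---matches the construction in \cite{Sandier_Serfaty_2007}.
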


The second tool that we will use is the Jacobian estimate originally due to Jerrard-Soner \cite{Jerrard_Soner_2002_Calvar} and one of its consequences.

\begin{theorem}[ Theorem 6.1 in \cite{Sandier_Serfaty_2007}]\label{th:jacobian_estimate}
Let \(u\in H^1(A_a,\mathbb{C})\), let \(\mathcal{B}=\{B_i\}_{i\in I}\) be a finite collection of disjoint closed balls and let \(\e>0\) be such that
\[ \{x\in A_a^\e; |u(x)-1|\geq 1/2\} \subset \cup_i B_i \]

Then \[ \| 2\p_x u \wedge \p_y u -2\pi \sum_{i\in I, B_i\subset A_a^\e} n_i \delta_{a_i}\|_{(C_c^{0,1}(A_a))^*}\leq C \max (r,\e)(1+E_\e(u_\e,A_a))\]
where \(a_i\) is the center of \(B_i\), \(r=\sum_ir(B_i)\), \(n_i=\deg (u/|u|, \p B_i)\) and \(C\) is a universal constant.
\end{theorem}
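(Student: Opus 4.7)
The plan is to test against an arbitrary Lipschitz function \(\varphi\in C_c^{0,1}(A_a)\) with \(\|\varphi\|_{W^{1,\infty}}\leq 1\), and estimate the pairing
\[\Bigl\langle 2\p_x u\wedge \p_y u - 2\pi \sum_{B_i\subset A_a^\e} n_i\delta_{a_i},\,\varphi\Bigr\rangle.\]
The starting point is the identity \(2\p_x u\wedge \p_y u = \curl j\) with \(j=u\wedge \nabla u\). I would split \(A_a\) into the vortex region \(V=\bigcup_i B_i\) and its complement \(\Omega_0=A_a\setminus V\), and handle the two pieces separately.

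On each ball \(B_i\subset A_a^\e\), I would approximate \(\varphi\) by its center value \(\varphi(a_i)\) and apply Stokes' theorem:
\[\int_{B_i}\varphi\,\curl j \;=\; \varphi(a_i)\int_{\p B_i} j\cdot \tau \;+\;\int_{B_i}(\varphi-\varphi(a_i))\,\curl j.\]
On \(\p B_i\) the hypothesis gives \(|u-1|<1/2\), hence \(|u|>1/2\); writing \(u=\rho w\) with \(w=u/|u|\), one extracts \(\int_{\p B_i} j\cdot\tau = 2\pi n_i + \int_{\p B_i}(\rho^2-1)\p_\tau\psi\), where \(\psi\) is a local phase of \(w\). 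The \(2\pi n_i\) term produces exactly the Dirac mass to be subtracted; the \((\rho^2-1)\)-correction is kept aside to cancel against a matching boundary term from \(\Omega_0\). The oscillation term is controlled by \(\|\nabla\varphi\|_\infty\,r_i\int_{B_i}|\nabla u|^2\), which after summation yields \(Cr\,E_\e(u,A_a)\); balls not fully contained in \(A_a^\e\) produce a bound of the same type with no Dirac counterpart.

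On \(\Omega_0\), the hypothesis forces \(|u|\geq 1/2\), so locally one can write \(u=\rho e^{i\psi}\), giving \(j=\rho^2\nabla\psi\) and therefore \(\curl j = \curl((\rho^2-1)\nabla\psi)\) on simply connected patches (since \(\curl\nabla\psi=0\)). Integration by parts gives
\[\int_{\Omega_0}\varphi\,\curl j \;=\; -\int_{\Omega_0}\nabla^\perp\varphi\cdot(\rho^2-1)\nabla\psi \;+\; (\text{boundary terms on }\p V),\]
and the boundary contributions absorb the leftover \(\int_{\p B_i}(\rho^2-1)\p_\tau\psi\) terms from above. Cauchy--Schwarz then bounds the bulk by \(\|\rho^2-1\|_{L^2}\,\|\nabla\psi\|_{L^2}\); using \(\|\rho^2-1\|_{L^2}^2\leq 4\e^2 E_\e(u)\) from the GL potential and \(\|\nabla\psi\|_{L^2}\leq \|\nabla u\|_{L^2}\leq CE_\e(u)^{1/2}\) yields an \(O(\e E_\e(u))\) contribution. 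Adding the \(O(rE_\e(u))\) ball term produces the asserted \(C\max(r,\e)(1+E_\e(u,A_a))\).

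The main obstacle is making \(u=\rho e^{i\psi}\) coherent globally on the topologically nontrivial \(\Omega_0\). This is handled by covering \(\Omega_0\) by simply connected patches and verifying that the monodromies of \(\psi\) around loops enclosing vortex balls produce precisely the \(2\pi n_i\) factors being subtracted, or equivalently by working directly with the closed one-form \(j/\rho^2\) on the complement of \(V\) and applying Green's theorem. This bookkeeping is the core of the Jerrard--Soner estimate.
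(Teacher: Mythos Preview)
The paper does not prove this statement: it is quoted as Theorem~6.1 of \cite{Sandier_Serfaty_2007} (the result is originally due to Jerrard--Soner \cite{Jerrard_Soner_2002_Calvar}) and invoked as a black box in Section~\ref{sec:tools}. There is therefore no proof in the paper to compare your attempt against.

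For what it is worth, your sketch is the standard Jerrard--Soner argument and is correct in outline. Two bookkeeping items would need tightening in a full write-up: first, the boundary terms on \(\partial B_i\) coming from the ball side carry the constant \(\varphi(a_i)\) while those from the complement side carry \(\varphi\) itself, so they do not cancel exactly; the residual \(\int_{\partial B_i}(\varphi-\varphi(a_i))(\rho^2-1)\partial_\tau\psi\) must be estimated separately (it is again \(O(r_i)\) times a GL energy term). Second, the hypothesis only constrains \(|u|\) on \(A_a^\e\), so the strip \(A_a\setminus A_a^\e\) and the balls not contained in \(A_a^\e\) require their own treatment; since \(\varphi\in C_c^{0,1}(A_a)\), this is handled either by noting that \(\supp\varphi\subset A_a^\e\) once \(\e\) is below the distance from \(\supp\varphi\) to \(\partial A_a\), or by a crude bound \(\int|\varphi||\nabla u|^2\) on that region. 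These are routine refinements of exactly the argument you describe.
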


\begin{theorem}[ Theorem 6.2 in \cite{Sandier_Serfaty_2007}]\label{th:jacobian_estimate_2}
Assume \(\delta \in (0,1)\) and \(\e<\e_0(\delta)\), where \(\e_0(\delta)\) is given by Theorem \ref{th:vortex_ball_construction}. Assume \(E_\e(u)<\e^{\delta-1}\) and let \( \mathcal{B}=\{B(a_i,r_i)\}_{i\in I}\) denote a collection of balls given by Theorem \ref{th:vortex_ball_construction} for some \(\e^{\frac{\delta}{2}}<r<1\). We let 
\[ \nu=2\pi \sum_{i \in I; B_i \subset A_a^\e} n_i \delta_{a_i}\]
where \(n_i=\deg(u, \p B_i)\) and \(\mu=2\p_xu \wedge \p_yu\). Then, writing \(M=E_\e(u)\), we have
\begin{equation*}
\| \mu-\nu\|_{(\C^{0,1}_c(A_a))^*}\leq Cr (M+1) \quad \text{ and } \| \nu \|_{\C(A_a)^*}\leq C\frac{M}{\delta |\ln \e|},
\end{equation*}
where \(C\) is a universal constant. Moreover for any \(\beta \in (0,1)\) there exists a constant \(c_\beta\) depending on \(\beta\) and \(A_a\) and \(\e_0(\delta,\beta)\) such that if \(\e<\e_0\), then
\begin{equation*}
\|\mu\|_{(\C_c^{0,\beta}(A_a))^*}\leq C_\beta \frac{M+1}{\delta |\ln \e|}, \quad \|\mu-\nu\|_{(\C_c^{0,\beta}(A_a))^*}\leq c_\beta r^\beta (M+1).
\end{equation*}
\end{theorem}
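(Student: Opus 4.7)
The plan is to combine the two tools stated just above---the vortex-ball construction of Theorem~\ref{th:vortex_ball_construction} and the Jacobian estimate of Theorem~\ref{th:jacobian_estimate}---and to finish with an interpolation argument on dual Hölder norms. First I would apply Theorem~\ref{th:vortex_ball_construction} at the prescribed scale $r\in(\e^{\delta/2},1)$, which is admissible since by hypothesis $E_\e(u)<\e^{\delta-1}$. Item~2) of that theorem forces $||u|-1|<\e^{\delta/4}<1/2$ outside the balls when $\e$ is small, and after a harmless enlargement of the $B_i$ by a bounded factor the set $\{|u-1|\geq 1/2\}\cap A_a^\e$ is also absorbed into the balls. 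This validates the hypothesis of Theorem~\ref{th:jacobian_estimate}, and because $r>\e^{\delta/2}\geq\e$ gives $\max(r,\e)=r$, Theorem~\ref{th:jacobian_estimate} directly yields $\|\mu-\nu\|_{(\C^{0,1}_c(A_a))^*}\leq Cr(M+1)$.

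To bound the mass of $\nu$ I observe that $\|\nu\|_{\C(A_a)^*}=2\pi\sum_{B_i\subset A_a^\e}|n_i|\leq 2\pi D$. Plugging item~3) of Theorem~\ref{th:vortex_ball_construction} into $M\geq E_\e(u,V)\geq \pi D\bigl(\ln(r/(D\e))-C\bigr)$ and using $\ln(r/\e)\geq (1-\delta/2)|\ln\e|$, together with a dichotomy on the size of $D$ (separating $D\leq \e^{-\delta/4}$ from larger values, so that the term $\pi D\ln D$ can be absorbed in the main term in each case), yields $D\leq CM/(\delta|\ln\e|)$. This is exactly the content of item~4) in the present regime.

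For the Hölder-dual estimates I would exploit the interpolation inequality for Radon measures on the bounded set $A_a$,
\[
\|T\|_{(\C^{0,\beta}_c(A_a))^*}\leq c(\beta,A_a)\,\|T\|_{(\C^{0,1}_c(A_a))^*}^{\beta}\,\|T\|_{\C(A_a)^*}^{1-\beta}.
\]
Applied to $T=\mu-\nu$, the Lipschitz-dual norm is $O(r(M+1))$ by the previous step; the total variation satisfies $\|\mu-\nu\|_{\C(A_a)^*}\leq \|\mu\|_{\C^*}+\|\nu\|_{\C^*}\leq \int|\nabla u|^2+CM/|\ln\e|\leq C(M+1)$, using the pointwise inequality $|\mu|\leq|\nabla u|^2$ and the mass bound on $\nu$. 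Interpolation then gives the second inequality $\|\mu-\nu\|_{(\C^{0,\beta}_c(A_a))^*}\leq c_\beta r^\beta(M+1)$. For the bound on $\mu$ itself, I would split $\|\mu\|_{(\C^{0,\beta}_c)^*}\leq \|\mu-\nu\|_{(\C^{0,\beta}_c)^*}+\|\nu\|_{(\C^{0,\beta}_c)^*}$, control the second summand by $C\|\nu\|_{\C^*}\leq CM/(\delta|\ln\e|)$, and optimize the radius---for instance taking $r=|\ln\e|^{-1/\beta}$, which satisfies $r>\e^{\delta/2}$ for small $\e$---so that $r^\beta(M+1)$ becomes comparable to $(M+1)/|\ln\e|$ and can be absorbed into the target bound.

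The main obstacle is precisely this last optimization: the desired estimate on $\|\mu\|_{(\C^{0,\beta}_c)^*}$ must be independent of the scale $r$, yet the Lipschitz-dual estimate is intrinsically proportional to $r$. One has to use the freedom in choosing $r$ inside $(\e^{\delta/2},1)$ to turn the factor $r^\beta$ into a quantity comparable to $1/|\ln\e|$, and this is exactly where the dependence of $c_\beta$ on $\beta$ and on the geometry of $A_a$ enters, via the interpolation constant and the admissible range of radii.
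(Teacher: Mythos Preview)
The paper does not give its own proof of this statement: Theorem~\ref{th:jacobian_estimate_2} is quoted verbatim from \cite[Theorem~6.2]{Sandier_Serfaty_2007} as a tool, with no argument supplied. So there is nothing in the paper to compare against.

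That said, your sketch is essentially the proof one finds in the reference: feed the vortex-balls of Theorem~\ref{th:vortex_ball_construction} into the Lipschitz-dual Jacobian estimate of Theorem~\ref{th:jacobian_estimate}, read off the mass bound on \(\nu\) from item~4), and then interpolate between \((\C^{0,1}_c)^*\) and \((\C_c)^*\) to reach the intermediate Hölder-dual norms. One clarification on the last step: the statement fixes a collection \(\mathcal{B}\) at a given scale \(r\), so you cannot ``optimize'' that \(r\) for the estimates involving \(\nu\); but the bound on \(\|\mu\|_{(\C^{0,\beta}_c)^*}\) is intrinsic to \(u\) and does not mention \(\nu\) or \(r\), so you are free to invoke an \emph{auxiliary} family of balls at the scale \(r'=|\ln\e|^{-1/\beta}\) (with its own \(\nu'\)) solely to prove it. Your write-up implicitly does this, but it would be clearer to say so explicitly. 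Also, the passage from \(\{||u|-1|\geq \e^{\delta/4}\}\) to the hypothesis of Theorem~\ref{th:jacobian_estimate} (stated with \(|u-1|\)) is the usual one: on the complement of the balls \(|u|\) is close to \(1\), so the degree and the Jacobian computations go through; the enlargement you mention is indeed harmless.
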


\section{The case \(d_\e\) of order \(|\ln \e|\): proof of Theorem \ref{th:main1}}\label{sec:d_order_lneps}

\subsection{Lower bound of the energy}

We start by deriving a lower bound on the energy of minimizers. Note that the lower bound we will obtain is valid for general domains as long as the GL energy is of order \(|\ln \e|^2\). 

\begin{proposition}\label{prop:first_lower_bound}
Let \( (u_\e)_\e\) be a family of minimizers of \(E_\e\) in \(\I_\e\) and assume that \(d_\e/|\ln \e| \to \alpha\). Let \(a>0\) and consider the extensions \(\tilde{u}_\e\) given by \eqref{eq:extension}.
Then, up to a subsequence \(\e\to 0\),
\begin{equation*}
\tilde{j}_\e \rightharpoonup \tilde{j} \text{ in } L^2(A_a) \quad \text{ and }  \tilde{\mu}_\e \rightharpoonup \tilde{\mu} \text{ in } (\C_c^{0,\gamma}(A_a))^* \text{ for any } \gamma\in (0,1).
\end{equation*}
Furthermore, \(\curl \tilde{j}=\tilde{\mu}\) in \(A_a\), \(\dive \tilde{j}=0\) in \(A\) and \(\supp \tilde{\mu} \subset \overline{B_{R_2}}\setminus \overline{B_{R_1}}\). We have
\begin{equation}\label{eq:lower_bound00}
\liminf_{\e \to 0}\frac{E_\e (u_\e)}{|\ln \e|^2}\geq \frac{\|\curl \tilde{j}\|}{2}+\frac{1}{2}\int_{A} |\tilde{j}|^2
\end{equation}
\end{proposition}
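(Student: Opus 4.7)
The strategy is to establish weak compactness of $\tilde j_\e/|\ln\e|$ and $\tilde\mu_\e/|\ln\e|$ from the upper bound, pass the algebraic identities to the limit, and combine the vortex-balls construction with weak lower semi-continuity. Since $d_\e/|\ln\e|\to\alpha$, Proposition \ref{cor:upper_bound_final} gives $E_\e(u_\e)=O(|\ln\e|^2)$; adding the explicit extension energy $\pi d_\e^2\ln((R_2+a)/R_2)$ yields $E_\e(\tilde u_\e,A_a)=O(|\ln\e|^2)$. The standard comparison $u_\e\mapsto u_\e/\max(1,|u_\e|)$ shows $|u_\e|\leq 1$, so $|\tilde j_\e|\leq|\nabla\tilde u_\e|$ and $\tilde j_\e/|\ln\e|$ is bounded in $L^2(A_a)$; Theorem \ref{th:jacobian_estimate_2} bounds $\tilde\mu_\e/|\ln\e|$ in $(\C_c^{0,\gamma}(A_a))^*$. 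Extracting subsequential weak limits $\tilde j,\tilde\mu$, the identity $\curl\tilde j=\tilde\mu$ passes to the limit. The Euler--Lagrange equation \eqref{eq:Euler_Lagrange} together with $a\wedge a=0$ gives $\dive j_\e=u_\e\wedge\Delta u_\e+\nabla u_\e\wedge\nabla u_\e=0$ in $A$, hence $\dive\tilde j=0$ in $\mathcal{D}'(A)$. On $B_{R_2+a}\setminus B_{R_2}$ a direct computation on $\tilde u_\e=e^{id_\e\theta}$ yields $\partial_x\tilde u_\e\wedge\partial_y\tilde u_\e\equiv 0$, so $\tilde\mu_\e\equiv 0$ there and hence $\supp\tilde\mu\subset\overline{A}\cap A_a=\overline{B_{R_2}}\setminus\overline{B_{R_1}}$.

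\textbf{Energy lower bound via vortex balls.} Choose $r_\e=|\ln\e|^{-2/\gamma}\to 0$ and apply Theorem \ref{th:vortex_ball_construction} to $\tilde u_\e$ on $A_a$: one obtains disjoint balls $\{B_i\}_{i\in I}$ with total radius $r_\e$, total degree $D_\e:=\sum|n_i|\leq CE_\e/|\ln\e|=O(|\ln\e|)$, and
\[
E_\e(\tilde u_\e,V_\e)\geq \pi D_\e\bigl(\ln(r_\e/(D_\e\e))-C\bigr)=\pi D_\e|\ln\e|(1+o(1)).
\]
The portion of $V_\e$ lying in the extension contributes only $O(d_\e^2 r_\e^2)=o(|\ln\e|^2)$ to the extension energy, so the bound transfers to $E_\e(u_\e,V_\e\cap A)$ modulo a $o(|\ln\e|^2)$ error. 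Point (2) of Theorem \ref{th:vortex_ball_construction} gives $|u_\e|\geq 1-\e^{\delta/4}$ on $A\setminus V_\e$, so $|\nabla u_\e|^2\geq|j_\e|^2$ there, and
\[
\frac{E_\e(u_\e)}{|\ln\e|^2}\geq \frac{\pi D_\e}{|\ln\e|}(1+o(1))+\frac{1}{2}\int_{A\setminus V_\e}\frac{|j_\e|^2}{|\ln\e|^2}+o(1).
\]

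\textbf{Passage to the limit and main difficulty.} Since $|V_\e|\leq\pi r_\e^2\to 0$, $\mathbf 1_{A\setminus V_\e}\to 1$ strongly in $L^p(A)$, so $(j_\e/|\ln\e|)\mathbf 1_{A\setminus V_\e}\rightharpoonup\tilde j|_A$ in $L^2$ and lower semi-continuity gives $\liminf\tfrac{1}{2}\int_{A\setminus V_\e}|j_\e|^2/|\ln\e|^2\geq\tfrac{1}{2}\int_A|\tilde j|^2$. Set $\nu_\e=2\pi\sum n_i\delta_{a_i}$: Theorem \ref{th:jacobian_estimate_2} yields $\|\tilde\mu_\e-\nu_\e\|_{(\C^{0,\gamma}_c)^*}\leq c_\gamma r_\e^\gamma(E_\e+1)=O(1)$, so $\nu_\e/|\ln\e|\rightharpoonup\tilde\mu$ in $(\C^{0,\gamma}_c)^*$. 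The main obstacle is to upgrade this convergence to a total-variation bound: since $\|\nu_\e\|_{TV}=2\pi D_\e=O(|\ln\e|)$, the sequence $\nu_\e/|\ln\e|$ has uniformly bounded total variation and a subsequence converges weakly-$*$ as Radon measures, necessarily to $\tilde\mu$, which is therefore a Radon measure; standard lower semi-continuity of the total variation then gives $\|\tilde\mu\|\leq\liminf 2\pi D_\e/|\ln\e|$, so $\liminf\pi D_\e/|\ln\e|\geq\|\curl\tilde j\|/2$. Adding the two lower bounds yields \eqref{eq:lower_bound00}.
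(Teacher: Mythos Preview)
Your proof is correct and follows essentially the same approach as the paper: extend to $A_a$, apply the vortex-balls construction and Jacobian estimates, use weak lower semicontinuity for both the $D_\e$ term and the $L^2$ term. The differences are cosmetic: you subtract the extension energy locally on $V_\e$ (using that $|\nabla e^{id_\e\theta}|^2\le C d_\e^2$ and $|V_\e|\le\pi r_\e^2$) whereas the paper subtracts it globally at the end via $E_\e(u_\e,A)=E_\e(\tilde u_\e,A_a)-\pi d_\e^2\ln\frac{R_2+a}{R_2}$; you handle the $L^2$ lower bound by the weak--strong product argument $(j_\e/|\ln\e|)\mathbf 1_{A\setminus V_\e}\rightharpoonup \tilde j|_A$, while the paper uses the diagonal-set trick $\mathcal A_N=\bigcup_{n\ge N}V_{\e_n}$; and you verify $\dive\tilde j=0$ explicitly from the Euler--Lagrange equation, which the paper leaves implicit. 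Two minor remarks: the bound $|\nabla u_\e|^2\ge|j_\e|^2$ already follows from $|u_\e|\le 1$ everywhere, so invoking point~(2) of Theorem~\ref{th:vortex_ball_construction} there is unnecessary; and the extraction of a weak-$*$ limit for $\tilde\mu_\e/|\ln\e|$ in $(\C_c^{0,\gamma})^*$ is cleanest by first obtaining the Radon-measure limit of $\nu_\e/|\ln\e|$ and then using the Jacobian estimate to pass it to $\tilde\mu_\e/|\ln\e|$.
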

Here \(\|\mu\|\) denotes the total variation of the measure \(\mu\), i.e., \(\|\mu\|=|\mu|(A_a)\).

\begin{proof} We follow the proof of \cite[Theorem 7.1, Item 1)]{Sandier_Serfaty_2007}. We first observe that, when \(d_\e/|\ln \e| \to \alpha\), then the GL energy of minimizers is of order \(|\ln \e|^2\) thanks to Lemma \ref{lem:1st_upper_bound}.  We can write that
\begin{equation}\label{eq:sum_energy}
E_\e(u_\e,A)=E_\e(\tilde{u}_\e, A_a)-\pi d_\e^2 \ln \frac{R_2+a}{R_2}.
\end{equation}
We apply the vortex-balls construction of Theorem \ref{th:vortex_ball_construction} to \(\tilde{u}_\e\) to find a finite collection of balls \(B_i^\e=B(x_{i,\e},r_{i,\e})\) for \(i \in I_\e\) such that
\begin{equation*}
E_\e(\tilde{u}_\e,A_a)\geq \pi D_\e\left( \ln \frac{r_\e}{D_\e \e}-C \right)+\frac12 \int_{A_a \setminus \left( \cup B_i^\e \cap A_a^\e \right)} |\nabla \tilde{u}_\e|^2,
\end{equation*}
where 
\[r_\e=\sum_{i\in I_\e} r_{i,\e}, \quad A_a^\e=\{x\in A_a; \dist (x,\p A_a)>\e\},  \quad D_\e=\sum_{i\in I_\e} |n_{i,\e}|,\] with \(n_{i,\e}=\deg (u_\e/|u_\e|,\p B_i^\e)\) if \(B_i^\e\subset A_a^\e\) and \(n_{i,\e}=0\) otherwise  and \(C>0\) is a universal constant. We can choose \(r_\e\) which goes to zero but with \(|\ln r_\e| \ll |\ln \e|\) since, from the energy upper bound, \(r_\e\) can be chosen so that \(1>r_\e>C\e^{\delta/2}\) for any \(\delta \in (0,1)\). From Theorem \ref{th:vortex_ball_construction} we also have
\begin{equation}\label{eq:bound_number_vortices}
|D_\e| \leq C \frac{E_\e(\tilde{u}_\e,A_a)}{\delta|\ln \e|}, \quad \forall \delta \in (0,1).
\end{equation}
In particular we find that \( |D_\e|\leq C|\ln \e|\) and we obtain the lower bound
\begin{equation}\label{eq:lower_bound_1}
E_\e(\tilde{u}_\e,A_a)\geq \pi D_\e |\ln \e| -o(|\ln \e|^2)+\frac12 \int_{A_a\setminus \left( \cup B_i^\e \cap A_a^\e \right)} |\nabla \tilde{u}_\e|^2.
\end{equation}
Now we use that \(|\nabla \tilde{u}_\e|^2\geq |\tilde{u}_\e\wedge \nabla \tilde{u}_\e|^2=|\tilde{j}_\e|^2\) in \(A_a\).

We have that \(\frac{\tilde{j}_\e}{|\ln \e|}\) is bounded in \(L^2(A_a)\), thus, up to a subsequence it converges weakly in \(L^2\) towards some \(\tilde{j}\in L^2(A_a)\). We then take a sequence \(\e_n \to 0\) and we set \(\mathcal{A}_N=\bigcup_{n\geq N} V_{\e_n}\) where
\begin{equation*}
V_{\e_n}=\left( \bigcup_{i\in I_{\e_n}} B_i^{\e_n} \right) \cap \{x\in A_a; \dist (x,\p A_a)>\e_n\} .
\end{equation*}
Up to a subsequence, we have that \( |\mathcal{A}_N|\to 0\) as \(N \to +\infty\). Indeed, recall that \(r_{\e}=\sum_{i \in I_\e} r_{i,\e}\) and observe that \(\sum_{i \in I_\e} |r_{i,\e}|^2\leq r_\e^2\). Now, since \(r_\e\to 0\), up to a subsequence we have that the series \(\sum_{n\geq 1} r_{\e_n}^2\to 0\) converges and this implies that \(|\mathcal{A}_N|\to 0\) as \(N\to +\infty\).
But for every \(N\) fixed, we see that
\begin{align*}
\liminf_{n \to +\infty} \int_{A_a \setminus V_{\e_n}}\frac{|\tilde{j}_{\e_n}|^2}{|\ln \e_n|^2} & \geq \liminf_{n \to +\infty} \int_{A_a\setminus \mathcal{A}_N}\frac{|\tilde{j}_{\e_n}|^2}{|\ln \e_n|^2} \geq \int_{A_a \setminus \mathcal{A}_N} |\tilde{j}|^2.
\end{align*}
We can then pass to the limit \(N \to +\infty\) to find
\begin{equation}\label{eq:interaction_part}
\liminf_{n \to +\infty} \int_{A_a \setminus V_{\e_n}}\frac{|\tilde{j}_{\e_n}|^2}{|\ln \e_n|^2} \geq \int_{A_a} |\tilde{j}|^2.
\end{equation}

Now, since \(j_\e\cdot \tau=j_\e\cdot \vec{e_\theta}=\frac{d_\e}{r}\) on \(\p B_{R_2}\), a computation in the sense of distribution shows that
\begin{equation*}
\curl \tilde{j}_\e =\begin{cases}
\curl j_\e &\text{ in } A \\
0 & \text{ in } B_{R_2+a} \setminus \overline{B}_{R_2}
\end{cases}=2\p_x \tilde{u}_\e\wedge \p_y \tilde{u}_\e.
\end{equation*}
From \eqref{eq:bound_number_vortices} we deduce that \(\left( 2\pi \sum_{i\in I_{\e_n}} n_{i,\e_n}\delta_{x_{i,\e_n}}\right) /|\ln {\e_n}|\) is a  bounded sequence of measures and thus, up to a subsequence, it converges to some limit. From the Jacobian estimate of Theorem \ref{th:jacobian_estimate} this limit is also the limit of \(\tilde{\mu}_{\e_n}/|\ln \e_n|=\curl \tilde{j}_{\e_n}\). Since  \(\tilde{\mu}_{\e_n}/|\ln \e_n| \rightharpoonup \curl \tilde{j}\) in the sense of distributions we find that
\begin{equation*}
\frac{2\pi \sum_{i\in I_{\e_n}} d_{i,\e_n}\delta_{x_{i,\e_n}}}{|\ln {\e_n}|}\rightharpoonup \tilde{\mu}:=\curl \tilde{j} \text{ in } \mathcal{M}(A_a).
\end{equation*}

Hence,
\begin{equation}\label{eq:vortices-part}
\liminf_{\e_n \to 0} \frac{\pi |D_{\e_n}| |\ln {\e_n}|}{|\ln \e_n|^2}\geq \frac{\|\tilde{\mu}\|(A_a)}{2}.
\end{equation}
Note that Theorem \ref{th:jacobian_estimate_2} implies that \(\tilde{\mu}_\e\) converges to \(\tilde{\mu}\) in \((\C_c^{0,\gamma}(A_a))^*\) for any \(\gamma\in (0,1)\).

By using \eqref{eq:lower_bound_1},\eqref{eq:interaction_part} and \eqref{eq:vortices-part} we find that
\begin{equation}\label{eq:lower_bound0}
\liminf_{\e_n\to 0}\frac{E_{\e_n}(\tilde{u}_{\e_n},A_a)}{|\ln \e_n|^2}\geq  \frac{\|\tilde{\mu}\|(A_a)}{2}+ \frac12 \int_{A_a} |\tilde{j}|^2
\end{equation}
with \(\curl \tilde{j}=\tilde{\mu}\) in \(A_a\) and \(\supp\tilde{\mu} \subset \overline{B}_{R_2}\setminus \overline{B}_{R_1}\). This last property comes from the fact that \(\curl \tilde{j}_\e=0\) in \(B_{R_2+a}\setminus \overline{B}_{R_2}\) for every \(\e\). Now we obtain \eqref{eq:lower_bound00} by using \eqref{eq:lower_bound0}, \eqref{eq:sum_energy} and by observing that \(\int_{A_a\setminus A}|\tilde{j}|^2=\pi \alpha^2\ln (\frac{R_2+a}{R_2})\).
\end{proof}

\subsection{Minimization of the function \(F\)}\label{sec:min_F}

The result of the previous section leads us to study the minimization of the functional
\begin{equation}\label{eq:functionnelle_F}
F^a(j) =\frac{\|\curl j\|(A_a)}{2}+\frac{1}{2}\int_{A} |j|^2
\end{equation}
defined on
\begin{multline}\label{eq:min_space}
\mathcal{J}^a:=\Bigl\{j\in L^2(A_a); \dive (j)=0 \text{ in } A, \curl j \in \M(A_a), \\
\supp \curl j \subset \overline{B}_{R_2}\setminus B_{R_1}, j=\frac{\alpha}{r}\vec{e_\theta} \text{ in } B_{R_2+a}\setminus B_{R_2}\Bigr\}.
\end{multline}
Actually, for \(j\in \J^a\) we rewrite \(F^a(j)\) as 
\begin{equation}\label{eq:functionnelle_F_bis}
F^a(j) =\frac{\|\curl j\|(A_a)}{2}+\frac{1}{2}\int_{A_a} |j|^2-\pi \alpha^2 \ln \frac{R_2+a}{R_2}.
\end{equation}
A first remark is that \(\mathcal{J}^a\) is a vectorial space and \(F^a\) is strictly convex on \(\J^a\). We can also check that, since the total variation and the \(L^2\) norm are weakly lower semi-continuous for the corresponding topologies, \(F^a\) is weakly lower semi-continuous. An application of the direct method in the calculus of variations yields that \(F^a\) admits a unique minimizer in \(\mathcal{J}^a\). We are able to describe more precisely this minimizer.

\begin{proposition}\label{prop:jmin_radial}
Let \(j_0^a\) be the unique minimizer of \(F^a\) in \(\J^a\) then \(j_0^a=f^a(r) \vec{e_\theta}\) for some \(f^a\in L^2((R_1,R_2+a))\) and \(\vec{e_\theta}=(-\sin \theta, \cos \theta)\).
\end{proposition}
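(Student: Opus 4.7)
The plan is to exploit the rotational symmetry of the problem combined with strict convexity and uniqueness of the minimizer, and then to use the divergence-free constraint to kill a possible radial component.

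First, I would introduce the natural $SO(2)$-action on vector fields: for each $\theta_0\in [0,2\pi)$ and any $j:A_a\to\R^2$, define $(\rho_{\theta_0}\cdot j)(x):=\rho_{\theta_0}\bigl(j(\rho_{-\theta_0}x)\bigr)$, where $\rho_{\theta_0}$ denotes rotation by $\theta_0$. Since $\divv$ and $\curl$ intertwine with rotations, since $A$, $A_a$ and $\overline{B}_{R_2}\setminus B_{R_1}$ are rotationally invariant, and since the prescribed vector field $\tfrac{\alpha}{r}\vec{e_\theta}$ on $B_{R_2+a}\setminus B_{R_2}$ is itself invariant, the map $j\mapsto\rho_{\theta_0}\cdot j$ preserves $\mathcal{J}^a$ and leaves both terms of $F^a$ unchanged, so $F^a(\rho_{\theta_0}\cdot j)=F^a(j)$.

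Next, the strict convexity of the $L^2$-term together with the convexity of the total variation gives strict convexity of $F^a$ on the convex set $\mathcal{J}^a$, hence a unique minimizer $j_0^a$. The rotational average $\bar{j}:=\tfrac{1}{2\pi}\int_0^{2\pi}\rho_\theta\cdot j_0^a\,\dd\theta$ still lies in $\mathcal{J}^a$, and Jensen's inequality together with subadditivity of the total variation under averaging yields $F^a(\bar j)\le F^a(j_0^a)$. Uniqueness then forces $\bar j=j_0^a$, so $j_0^a$ is rotationally invariant and admits the decomposition $j_0^a(r,\theta)=f(r)\vec{e_r}+g(r)\vec{e_\theta}$ for scalar functions $f,g$ on $(R_1,R_2+a)$. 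It remains to show $f\equiv 0$. The divergence-free condition in $A$ reduces to $\p_r(rf(r))=0$, so $rf(r)=c$ on $(R_1,R_2)$ for some constant $c$, while the prescribed value outside $B_{R_2}$ forces $f\equiv 0$ on $(R_2,R_2+a)$. A direct computation gives $\curl(f\vec{e_r}+g\vec{e_\theta})=\tfrac{1}{r}\p_r(rg)$, so the curl of a rotationally invariant field depends only on its tangential component $g$. Hence the competitor $\tilde j:=g(r)\vec{e_\theta}$ in $A$, extended by $\tfrac{\alpha}{r}\vec{e_\theta}$ in $A_a\setminus A$, still belongs to $\mathcal{J}^a$, has the same curl as $j_0^a$, and satisfies $\int_A|\tilde j|^2=\int_A|j_0^a|^2-2\pi c^2\ln(R_2/R_1)$, strictly less unless $c=0$. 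By uniqueness, $c=0$ and $j_0^a=f^a(r)\vec{e_\theta}$ with $f^a:=g\in L^2((R_1,R_2+a))$.

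The step I expect to be the most delicate is justifying the averaging argument for the measure-valued term: one has to check that $\curl\bar j$ is still a bounded Radon measure with support in $\overline{B}_{R_2}\setminus B_{R_1}$ and to establish the inequality $\|\curl\bar j\|(A_a)\le \tfrac{1}{2\pi}\int_0^{2\pi}\|\rho_\theta\cdot\curl j_0^a\|(A_a)\,\dd\theta$, which follows from a Fubini argument and the usual subadditivity of the total variation with respect to averaging of measures. Once this technical point is settled, the symmetry-plus-uniqueness mechanism sketched above gives the result directly.
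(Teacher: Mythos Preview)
Your proof is correct and, in fact, more elementary than the paper's. Both arguments first reduce to a rotationally invariant form $j_0^a=g(r)\vec{e_r}+f(r)\vec{e_\theta}$ and then eliminate the $\vec{e_r}$-component by the same energy comparison: since $\frac{c}{r}\vec{e_r}=\nabla\bigl(c\ln r\bigr)$, its distributional curl vanishes, so dropping it leaves $\|\curl j\|(A_a)$ unchanged while strictly lowering $\int_A|j|^2$ unless $c=0$.

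The difference lies in how rotational invariance of the minimizer is obtained. The paper invokes the Palais principle of symmetric criticality in the nonsmooth form of Squassina, writing $F^a=F_0^a+F_1^a$ with $F_0^a$ of class $\C^1$ and $F_1^a$ convex, lower semicontinuous, and $SO(2)$-invariant; this guarantees that the minimizer of $F^a$ on the rotationally invariant subset of $\mathcal{J}^a$ is critical on all of $\mathcal{J}^a$, and strict convexity then identifies it with $j_0^a$. Your route bypasses this black box via uniqueness. Note, incidentally, that the averaging $\bar j=\frac{1}{2\pi}\int_0^{2\pi}\rho_\theta\cdot j_0^a\,\dd\theta$ and the accompanying subadditivity of the total variation (the step you flag as delicate) are not actually needed: since $\rho_{\theta_0}\cdot j_0^a\in\mathcal{J}^a$ and $F^a(\rho_{\theta_0}\cdot j_0^a)=F^a(j_0^a)$, uniqueness already gives $\rho_{\theta_0}\cdot j_0^a=j_0^a$ for every $\theta_0$, and the conclusion follows immediately. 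Your approach trades a citation of a general symmetric-criticality theorem for a direct two-line argument; the paper's version, while heavier, shows how that abstract principle operates in a setting where the functional is only convex rather than smooth.
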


\begin{proof}
Let \(\vec{e_r}=(\cos \theta,\sin \theta)\).  We consider the action of the group of rotations on elements of \(\mathcal{J}^a\) defined by \((R,j)\in SO(2)\times \mathcal{J}^a \mapsto R j(R^{-1}\cdot)\). The subset \(\J^a\cap \{j \in L^2(A_a); j=g(r)\vec{e_r}+f(r) \vec{e_\theta}\}\) is the set of invariants of \(\J^a\) by this action. It can be checked that the functional \(F^a\) is invariant by this action. Furthermore we can write \( F^a=F_0^a+F_1^a\) with
\begin{equation*}
F_0^a(j)=\frac{1}{2}\int_{A_a} |j|^2, \quad F_1^a(j)=\frac{\|\curl j\|(A_a)}{2}.
\end{equation*}
Since \(F_0^a\) is \(\C^1\) on \(\mathcal{J}^a\) and \(F_1^a\) is a convex, proper, lower semi-continuous function which is invariant by the action of \(SO(2)\), we can apply Proposition 3 and Theorem 4 in  \cite{Squassina_2011} to deduce that the principle of symmetric criticality  of Palais holds true in our situation. Hence we obtain that the minimizer of \(F^a\) in \(\J^a \cap \{j \in L^2(A_a); j=g(r)e_r+f(r) \vec{e_\theta}\}\) is a critical point of \(F^a\) in \(\J^a\). But by strict convexity, this critical point is the unique minimizer of \(F^a\) in \(\J^a\).
However from the condition \(\dive j_0^a= \frac{1}{r}(rg(r))'=0\) in \(A\), we find that \(g(r)=C/r\) for some constant \(C\) in \(A\). But \(\curl \frac{C}{r}\vec{e_r} =0\). Hence we can take \(C=0\), since having \(C\neq 0\) only increases the energy \(F^a\).
\end{proof}

The rest of this section is devoted to find the function \(f^a\) such that \(j_0^a=f^a(r)\vec{e_\theta}\) in \(A\). We aim to show

\begin{proposition}\label{prop:explicit_j}
Let \(j_0^a\) be the unique minimizer of \(F^a\) in \(\J^a\) then \(j_0^a=f^a(r) \vec{e_\theta}\) in \(A\) with \(|f^a(r)|=\frac{\lambda}{r}\) for some constant \(\lambda\) in  \(\R^+\) and \(\curl j_0^a\) can be supported only on some circles in \(\overline{A}\).
\end{proposition}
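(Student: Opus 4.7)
The plan is to use the radial structure of $j_0^a$ from Proposition \ref{prop:jmin_radial} to reduce $F^a$ to a one-dimensional problem on the profile $g(r) := rf^a(r)$, and then apply inner variations in the $r$ variable to force $g^2$ to be a constant function on $(R_1,R_2)$.

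First I would rewrite $F^a$ in terms of $g$. The boundary condition $j_0^a=(\alpha/r)\vec{e_\theta}$ on $B_{R_2+a}\setminus B_{R_2}$ becomes $g\equiv\alpha$ on $(R_2,R_2+a)$, the condition $\dive j_0^a=0$ is automatic for a radial field, and the polar identity $\curl(f(r)\vec{e_\theta})=(rf)'/r$, paired with a computation on radial test functions, yields
\begin{equation*}
\|\curl j_0^a\|(A_a)=2\pi\|Dg\|((R_1,R_2+a)),\qquad \frac12\int_A|j_0^a|^2=\pi\int_{R_1}^{R_2}\frac{g(r)^2}{r}\,dr,
\end{equation*}
where $Dg\in\M((R_1,R_2+a))$ denotes the distributional derivative of $g\in BV$. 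Thus minimality of $j_0^a$ is equivalent to $g$ being a minimizer of
\begin{equation*}
G(g):=\pi\|Dg\|((R_1,R_2+a))+\pi\int_{R_1}^{R_2}\frac{g(r)^2}{r}\,dr
\end{equation*}
among $g\in BV\cap L^2$ with $g\equiv\alpha$ on $(R_2,R_2+a)$.

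Next I would perform an inner variation in $r$. For $\varphi\in \C^\infty_c((R_1,R_2))$ and $|t|$ small, the map $\Phi_t(r)=r+t\varphi(r)$ (extended by the identity outside the support of $\varphi$) is a bi-Lipschitz diffeomorphism of $(R_1,R_2+a)$, and $g_t:=g\circ\Phi_t^{-1}$ is an admissible competitor. Since $\Phi_t$ is the identity near the endpoints and near $R_2$, the total variation is preserved: $\|Dg_t\|((R_1,R_2+a))=\|Dg\|((R_1,R_2+a))$. A change of variables in the quadratic term gives
\begin{equation*}
\frac{d}{dt}\bigg|_{t=0}\int_{R_1}^{R_2}\frac{g_t(r)^2}{r}\,dr=\int_{R_1}^{R_2}g(s)^2\left(\frac{\varphi(s)}{s}\right)'\,ds,
\end{equation*}
and the vanishing of this derivative for all admissible $\varphi$, together with the fact that $\psi:=\varphi/s$ ranges over all of $\C^\infty_c((R_1,R_2))$ because $1/s$ is smooth and bounded away from $0$ and $\infty$ on $[R_1,R_2]$, forces $(g^2)'=0$ in $\mathcal{D}'((R_1,R_2))$. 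Hence $g^2$ is constant a.e.\ on $(R_1,R_2)$, equal to some $\lambda^2$ with $\lambda\in\R^+$, which is exactly $|f^a(r)|=\lambda/r$.

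Finally, because $g$ is $BV$ and takes only the two values $\pm\lambda$ on $(R_1,R_2)$ (and the value $\alpha$ on $(R_2,R_2+a)$), it must be a step function with at most finitely many jumps at some $r_1,\ldots,r_K\in(R_1,R_2]$; hence $\curl j_0^a$, which in polar coordinates is given by $g'/r$, is supported on the finite union of circles $\{r=r_k\}\subset\overline A$. The main technical point is the invariance of $\|Dg\|$ under bi-Lipschitz reparametrization of the interval, which follows from the definition of the total variation of a $BV$ function; once this is granted, the rest of the derivation is routine.
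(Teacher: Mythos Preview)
Your proof is correct and follows a genuinely more elementary route than the paper's. Both arguments rest on the same idea---inner variations leave the mass term \(\|\curl j\|\) unchanged, so criticality forces the Dirichlet part alone to be stationary---but you exploit this directly in one dimension, whereas the paper takes a considerably longer path. The paper introduces the potential \(h\) with \(j_0^a=\nabla^\perp h\), rewrites \(F^a\) in terms of \(\mu=\curl j\) via the Green function of the annulus (Proposition~\ref{prop:energy_measure_green}, which requires the Brezis--Browder machinery), performs inner variations on \(\mu\) (Proposition~\ref{prop:Inner_variations_mu}), relates these to the stress-energy tensor \([h,h]\) through a delicate approximation argument (Proposition~\ref{prop:stress_energy_tensor_divfree}), and only then reduces to the radial ODE \((r^2|h'|^2)'=0\); for the support statement it invokes an external result from \cite{Rodiac_2016}. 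Your substitution \(g=rf^a\) collapses all of this: the invariance of \(\|Dg\|\) under monotone reparametrization of the interval replaces the Green-function and stress-tensor computations in one stroke, the resulting equation \((g^2)'=0\) is exactly \((r^2|h'|^2)'=0\), and the support claim follows from the elementary observation that a \(BV\) function taking only the two values \(\pm\lambda\) has at most \(\|Dg\|/(2\lambda)\) sign changes (for \(\lambda>0\); the case \(\lambda=0\) is trivial). What your approach buys is brevity and self-containment; what the paper's buys is a template (stationary-harmonic potentials, divergence-free stress tensors) that would survive in less symmetric settings where the reduction to a one-dimensional \(BV\) problem is unavailable.
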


Thanks to Proposition \ref{prop:jmin_radial}, the problem of describing the minimizer \(j_0^a\) of \(F^a\) in \(\J^a\) reduces to a 1D problem. This is due to fact that we are working in a  symmetric domain with rotationally invariant boundary condition. We will obtain Proposition \ref{prop:explicit_j} by proving  that \(j_0^a=\nabla^\perp h^a\) with \(h^a\) which is stationary harmonic in \(A\). Here by ``stationary harmonic'' we mean that it is a critical point of the Dirichlet energy for inner variations or that the stress-energy tensor is divergence-free, cf.\ e.g.\ \cite[Theorem 1.4.15]{Helein_2002}. In the radial case, this stationarity condition means that \((r^2|h'|^2)'=0\).

First to each \(j\in \J^a\) of the form \(j=f(r)\vec{e_\theta}\) we associate a unique \(h\in H^1(A_a)\) defined  by
\begin{equation}\label{eq:potential}
\left\{
\begin{array}{rcll}
\Delta h&=&\mu & \text{ in } A_a \\
\p_\nu h&=& \frac{\alpha}{R_2+a} & \text{ on } \p B_{R_2+a} \\
h&=&0  & \text{ on } \p B_{R_1},
\end{array}
\right.
\end{equation}
 where \(\mu=\curl j\). Indeed, if \(j\in \J^a\) can be written \(f(r)\vec{e_\theta}\) then we can write \(j=\nabla^\perp h\) for \(h\) radial and \(h=\int_{R_1}^r f(s) ds\). We then see that \( \curl j=\mu=\Delta h\) and, since \(j=(\alpha/r) \vec{e_\theta}\) on \(\p B_{R_2+a}\) we obtain that \(j\cdot \vec{e_\theta}=\nabla^\perp h\cdot \vec{e_\theta}=\p_\nu h=\alpha/(R_2+a)\) on \( \p B_{R_2+a}\). This shows that the current vectors \(j\) in \(\J^a\) which can be written \(j=f(r)\vec{e_\theta}\) are uniquely determined by \(\mu= \curl j\in \mathcal{M}(A_a)\) or by \(h\) the unique solution to \eqref{eq:potential}. We set
\begin{equation}\label{eq:Irad}
 \J_{rad}= \{j\in \J^a; \exists f\in L^2( (R_1,R_2+a)), \ j=f(r) \vec{e_\theta}\}.
 \end{equation}

 We next show that we can rewrite the energy \(F^a(j)\) in terms of \(h\) or in terms of \(\mu\). To this end we introduce the following Green function:

\begin{equation}\label{eq:Green}
\left\{
\begin{array}{rcll}
-\Delta_x G(x,y) &=& \delta_y & \text{ in } A_a, \\
\p_{\nu} G(x,y)& =& 0 & \text{ on } \p B_{R_2+a}, \\
G(x,y)& =& 0  & \text{ on } \p B_{R_1}.
\end{array}
\right.
\end{equation}
 This Green function satisfies the usual following properties.

\begin{proposition}\label{prop:properties_Green_function}
Let \(G\) be the Green function defined by \eqref{eq:Green} then
\begin{itemize}
\item[1)] \(G(x,y)=G(y,x)\) for all \(x\neq y\) in \(A_a\).
\item[2)] \(G(x,y)=-\frac{1}{2\pi}\ln |x-y| +\varphi(x,y)\) where \(\varphi\) is in \(\C^\infty(A_a\times A_a)\).
\item[3)] Actually we have that \( \varphi\) is in \(\C^\infty(\overline{A}_a\times A_a)\) and in \(\C^\infty(A_a\times \overline{A}_a)\).
\end{itemize}
\end{proposition}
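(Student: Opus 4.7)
The plan is to establish the three items in sequence using classical Green's function techniques, exploiting the fact that $A_a$ is a smooth annular domain whose Dirichlet boundary $\p B_{R_1}$ and Neumann boundary $\p B_{R_2+a}$ are disjoint smooth circles.

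For item (1), I would apply Green's second identity to $v_i := G(\cdot,y_i)$ for two distinct points $y_1,y_2\in A_a$ on the punctured region $A_a\setminus (B_\rho(y_1)\cup B_\rho(y_2))$. Both functions are harmonic there, so the volume integral vanishes. On $\p B_{R_1}$ both $v_i$ vanish and on $\p B_{R_2+a}$ both $\p_\nu v_i$ vanish, so the boundary contributions on $\p A_a$ cancel. Letting $\rho\to 0$ and using the local expansion $v_i(x)=-\frac{1}{2\pi}\ln|x-y_i|+O(1)$ in the neighborhood of $y_i$ (together with the fact that each $v_i$ is bounded near the other point) yields $v_1(y_2)-v_2(y_1)=0$, which is the symmetry $G(y_1,y_2)=G(y_2,y_1)$.

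For item (2), denote by $\Gamma(z):=-\frac{1}{2\pi}\ln|z|$ the fundamental solution of $-\Delta$ on $\R^2$ and define $\varphi(x,y):=G(x,y)-\Gamma(x-y)$. Then, for each fixed $y\in A_a$, the function $\varphi(\cdot,y)$ is harmonic in $A_a$ and satisfies the mixed boundary conditions $\varphi(\cdot,y)=-\Gamma(\cdot-y)$ on $\p B_{R_1}$ and $\p_{\nu_x}\varphi(\cdot,y)=-\p_{\nu_x}\Gamma(\cdot-y)$ on $\p B_{R_2+a}$. Because $y$ is interior to $A_a$, these data are smooth functions of $x$ on each boundary piece. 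Interior elliptic regularity gives $\varphi(\cdot,y)\in \C^\infty(A_a)$, and joint smoothness in $(x,y)$ follows by differentiating the equation with respect to the parameter $y$: the data depend smoothly on $y$ as long as $x,y$ stay in $A_a$, so each derivative in $y$ satisfies a similar mixed problem with smooth data and is therefore itself smooth in $x$.

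For item (3), I would invoke elliptic regularity up to the boundary. Fix $y\in A_a$; since $\dist(y,\p A_a)>0$, the boundary data $-\Gamma(\cdot-y)$ and $-\p_{\nu_x}\Gamma(\cdot-y)$ are $\C^\infty$ on $\p A_a$ with no singularity, and each remains smooth under differentiation in $y$. Schauder estimates for the mixed Dirichlet–Neumann problem on the smooth annular domain $A_a$ then give $\varphi(\cdot,y)\in \C^\infty(\overline{A}_a)$, and differentiating in $y$ propagates this regularity jointly, yielding $\varphi\in \C^\infty(\overline{A}_a\times A_a)$. The companion statement $\varphi\in \C^\infty(A_a\times\overline{A}_a)$ follows immediately from the symmetry established in item (1). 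The only technical point to verify is that the Dirichlet and Neumann portions of $\p A_a$ are disjoint smooth curves so that no corner/transition issue arises, but this is granted by the geometry of the annulus; there is no genuine obstacle in the argument.
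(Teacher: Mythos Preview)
Your proposal is correct. Items 1) and 2) follow essentially the same path as the paper (Green's second identity for the symmetry, then subtract the fundamental solution and invoke elliptic regularity for the mixed problem); your parameter-differentiation argument for joint smoothness is in fact tidier than the paper's step, which passes from separate smoothness of the two partial maps to joint $\C^\infty$ without further comment.

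The genuine divergence is in item 3). The paper does \emph{not} use Schauder estimates up to the boundary here: it instead computes the Green function of the annulus explicitly as a Fourier series in the angular variable (Appendix A) and reads off the $\C^\infty(\overline{A}_a\times A_a)$ regularity of $\varphi$ directly from the decay of the Fourier coefficients. Your abstract route via boundary Schauder estimates plus parameter differentiation plus the symmetry from item 1) is more portable (it would work in any smooth domain whose Dirichlet and Neumann boundary pieces are disjoint), whereas the paper's explicit formula is special to the circular annulus but yields a concrete expression for $G$ that is reused later.
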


\begin{proof}
\begin{itemize}
\item[1)] The symmetry of the Green function is classical and the proof of Theorem 13 p.35 in \cite{Evans_2010} adapts.
\item[2)] If we write \(\varphi(x,y)=G(x,y)-\frac{1}{2\pi}\ln |x-y|\) then we can see that \(\Delta_x\varphi =0\) in \(A_a\). Furthermore, for every \(y\) in \(A_a\), we observe that \(\varphi(\cdot,y)\) is in \(\C^\infty(\p B_{R_1})\) and \(\p_{\nu} \varphi (\cdot,y)\) is also in \(\C^\infty(\p B_{R_2+a})\). Elliptic regularity theory, see e.g.\ \cite{Nardi_2014}, provides that \(x \mapsto \varphi(x,y)\) is \(\C^\infty(\overline{A}_a)\) for every \(y\) in \(A_a\). But by the symmetry property of item 1) we also obtain that \(y \mapsto \varphi(x,y)\) is \(\C^\infty(A_a)\). Since the two partial maps of \(\varphi\) are in \(\C^\infty(A_a)\) we deduce that \(\varphi\) is in \(\C^\infty(A_a\times A_a)\).
\item[3)] The third points is proved thanks to the explicit formula of the Green function given in Appendix \ref{sec:AppendixA}.
\end{itemize}
\end{proof}

\begin{proposition}\label{prop:energy_measure_green}
Let \(j\in \J_{rad}\) and let \(\mu:=\curl j\). Then \(F^a(j)=H^a(\mu)\) where
\begin{multline}\label{eq:energy_measure}
H^a(\mu)=\frac{\|\mu\|(A_a)}{2}+ \frac12 \iint_{A_a\times A_a} G(x,y) d\mu(x)d\mu(y) \\
-\frac{\alpha}{2(R_2+a)} \Bigl( \int_{\p B_{R_2+a}} \int_{A_a}G(x,y) d\mu(x) d\mathcal{H}^1(y) + \int_{\p B_{R_2+a}} \int_{A_a}G(x,y) d\mu(y) d\mathcal{H}^1(x)\Bigr) \\
+\frac{\alpha^2}{2(R_2+a)}\int_{\p B_{R_2+a}}\int_{B_{R_2+a}} G(x,y) d \mathcal{H}^1(x) d \mathcal{H}^1(y)-\pi \alpha^2\ln \frac{R_2+a}{R_2}.
\end{multline}
\end{proposition}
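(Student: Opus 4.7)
The plan is to reduce the energy $F^a(j)$ for $j\in\J_{rad}$ to a functional of $\mu=\curl j$ alone by writing $j=\nabla^\perp h$ for a radial scalar potential $h$ and then invoking the Green representation for $h$. Indeed, if $j=f(r)\vec{e_\theta}\in\J_{rad}$, then $h(r):=\int_{R_1}^r f(s)\,ds$ belongs to $H^1(A_a)$, satisfies $j=\nabla^\perp h$, and solves the boundary value problem \eqref{eq:potential}: $\Delta h=\mu$ in $A_a$, $h=0$ on $\p B_{R_1}$, $\p_\nu h=\alpha/(R_2+a)$ on $\p B_{R_2+a}$. In particular $|j|^2=|\nabla h|^2$ and $\|\curl j\|(A_a)=\|\mu\|(A_a)$, so the task reduces to rewriting $\tfrac12\int_{A_a}|\nabla h|^2$ in terms of $\mu$ and the boundary datum.

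The next step is to represent $h$ via the Green function $G$ from \eqref{eq:Green}. Applying Green's second identity on $A_a$ with test function $y\mapsto G(x,y)$ for fixed $x$, and using that $-\Delta_y G(x,\cdot)=\delta_x$, $G(x,\cdot)=0$ on $\p B_{R_1}$, and $\p_{\nu_y} G(x,\cdot)=0$ on $\p B_{R_2+a}$, all boundary contributions on $\p B_{R_1}$ vanish (both $h$ and $G$ vanish there) and the boundary contribution on $\p B_{R_2+a}$ reduces to the term $(\alpha/(R_2+a))\,G(x,\cdot)$. This yields
\[
h(x) = -\int_{A_a} G(x,y)\,d\mu(y) + \frac{\alpha}{R_2+a}\int_{\p B_{R_2+a}} G(x,y)\,d\mathcal{H}^1(y).
\]

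A second integration by parts, using the same boundary conditions, gives
\[
\int_{A_a}|\nabla h|^2 = -\int_{A_a} h\,d\mu + \frac{\alpha}{R_2+a}\int_{\p B_{R_2+a}} h\,d\mathcal{H}^1.
\]
Substituting the Green representation of $h$ into both terms and invoking the symmetry $G(x,y)=G(y,x)$ from Proposition \ref{prop:properties_Green_function} produces exactly the three double integrals in \eqref{eq:energy_measure}: the $\mu\otimes\mu$ term, the symmetrised $\mu\otimes\mathcal{H}^1\lfloor_{\p B_{R_2+a}}$ term, and the $\mathcal{H}^1\lfloor_{\p B_{R_2+a}}\otimes\mathcal{H}^1\lfloor_{\p B_{R_2+a}}$ term. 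Combining with $\|\mu\|(A_a)/2$ and the additive constant $-\pi\alpha^2\ln((R_2+a)/R_2)$ from \eqref{eq:functionnelle_F_bis} then yields $F^a(j)=H^a(\mu)$.

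The main technical point is that $\mu$ is only a bounded Radon measure and $G$ has a logarithmic singularity on the diagonal, so both the Green representation and the integrations by parts need justification. Because the radial potential obeys $|h(r)|\leq \sqrt{r-R_1}\,\|f\|_{L^2}$, $h$ is bounded and $\int h\,d\mu$ is unambiguously defined; moreover $\supp\mu\subset\overline{B}_{R_2}\setminus\overline{B}_{R_1}$ lies at positive distance from $\p B_{R_2+a}$, decoupling $\mu$ from the Neumann boundary. The rigorous justification proceeds by regularizing $\mu$ by convolution to smooth $\mu_n$, solving \eqref{eq:potential} with $\mu_n$ to obtain smooth $h_n$, applying the classical Green identity for $h_n$, and passing to the limit using continuous dependence in $H^1$ together with the local integrability of $G$ granted by Proposition \ref{prop:properties_Green_function}(2)--(3).
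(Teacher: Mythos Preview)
Your approach is essentially the same as the paper's: derive the Green representation of the radial potential $h$, integrate by parts to express $\int_{A_a}|\nabla h|^2$ as $-\int h\,d\mu$ plus the Neumann boundary term, substitute, and justify everything by approximating $\mu$ with smooth data. The paper is more explicit about the two delicate points in the limiting step that your final paragraph only sketches: it invokes the Br\'ezis--Browder theorem to ensure $\int_{A_a}\ln|x-y|\,d\mu(y)$ is finite for $\mu$-a.e.\ $x$ (so the substitution of the Green formula into $\int h\,d\mu$ is legitimate despite the diagonal singularity), and for signed $\mu$ it uses the radial product structure together with the 1D embedding $\M((R_1,R_2+a))\subset H^{-1}$ to guarantee that both $\mu^{\pm}$ lie in $H^{-1}(A_a)$; your observation that $h$ is automatically bounded from the formula $h(r)=\int_{R_1}^r f$ is a useful simplification the paper does not exploit.
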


\begin{proof}
Assume first that \(\mu \in \C^\infty(A_a)\) then, from elliptic regularity theory, see e.g.\ \cite[Theorem 6.19]{Gilbarg_Trudinger_2001} and \cite{Nardi_2014}, the solution to \eqref{eq:potential} is smooth up to the boundary and can be represented with the Green function \eqref{eq:Green}. More precisely, see e.g.\ p.34 in \cite{Evans_2010},
\begin{equation*}
h(x)=\int_{\p A_a} G(x,y) \p_\nu h(y) d\mathcal{H}^1(y)-\int_{\p A_a} \p_\nu G(x,y) h(y) d\mathcal{H}^1(y) -\int_{A_a} G(x,y) \mu(y) dy.
\end{equation*}
Thus,
\begin{equation}\label{eq:representation_h}
h(x)= \frac{\alpha}{R_2+a}\int_{\p B_{R_2}} G(x,y) d\mathcal{H}^1(y)-\int_{A_a} G(x,y) \mu(y) d y.
\end{equation}

We can actually see that \eqref{eq:representation_h} continues to hold when \(\mu \in \M(A_a)\) such that there exists \(j\in \J^a\cap \J_{\text{rad}} \) with \(\mu=\curl j\). Note that such a measure \(\mu\) satisfies that \(\mu \in H^{-1}(A_a)\) since it can be written as the divergence of an \(L^2\) function.  We can also find \(\mu_n\in \C^\infty(A_a)\) such that \(\mu_n \rightharpoonup \mu\) in \(\M(A_a)\), see e.g.\ \cite[Proposition 2.7]{Ponce_2016}. By elliptic regularity, cf.\ e.g.\ \cite[Proposition 5.1]{Ponce_2016}, \(h_n\) solution of \eqref{eq:potential} with \(\mu=\mu_n\), converges to \(h\) solution de \eqref{eq:potential} in \(W^{1,p}_{\text{loc}}(A_a)\) for any \(1\leq p<2\).  Let us show that
\begin{equation*}
\int_{A_a} G(x,y) d \mu_n(y) \rightarrow \int_{A_a} G(x,y) d \mu(y) \text{ for } \mu-\text{a.e. } x\in A_a.
\end{equation*}
In order to do that we write \(G(x,y)=-\frac{1}{2\pi}\ln |x-y|+\varphi (x,y)\) with \(\varphi\) as in Proposition \ref{prop:properties_Green_function}. Let \(\eta_\delta\in \C^\infty_c(\R^2)\) such that \(\eta_\delta\equiv 1\) in \(A_a\setminus \{x \in A_a: \dist (x,\p B_{R_1})<\delta \text{ and } \dist( x, \p B_{R_2+a})<\delta\}\) and \(\eta_\delta =0\) on \(\p B_{R_1}\) and on \( \p B_{R_2+a}\) with \(\eta_\delta \to 1\) a.e.\ in \(A_a\) as \(\delta\to 0\). We then have
\begin{align*}
\left| \int_{A_a}\varphi(x,y) d (\mu_n-\mu)(y) \right|\leq \left| \int_{A_a} \eta_\delta(y) \varphi(x,y) d (\mu_n-\mu)(y) \right| \\
+ \left|\int_{A_a} (1-\eta_\delta)(y) \varphi(x,y) d (\mu_n-\mu)(y) \right|.
\end{align*}
The approximation \(\mu_n\) being given by the convolution of \(\mu\) with a regularizing kernel it satisfies \(|\mu_n-\mu| \leq 2 |\mu|\). From Proposition \ref{prop:properties_Green_function} we know that, for all \(x\in A_a\), the map \(y \mapsto \varphi(x,y)\in \C^\infty(\overline{A}_a)\). Hence
\begin{align*}
\left|\int_{A_a} (1-\eta_\delta(y)) \varphi(x,y) d (\mu_n-\mu)(y) \right|&\leq C_x \int_{A_a}(1-\eta_\delta) d |\mu|(y)  \xrightarrow[\delta \to 0]{} 0
\end{align*}
by dominated convergence theorem. On the other hand, since for any \(\delta>0\) and  for \(x\in A_a\) we have that  \(y \mapsto \eta_\delta(y) \varphi(x,y)\in \C^\infty_c(A_a)\), we obtain
\[\int_{A_a} \eta_\delta(y) \varphi(x,y) d (\mu_n-\mu)(y)\xrightarrow[n \to +\infty]{} 0.\]
Hence we deduce that 
\begin{equation}\label{eq:cv_varphi}
\int_{A_a}  \varphi(x,y) d (\mu_n-\mu)(y)\xrightarrow[n \to +\infty]{} 0 \text{ for every } x\in A_a.
\end{equation}

Now we can apply \cite[Theorem 1]{Brezis_Browder_1979} to deduce that if \(\mu \in H^{-1}(A_a)\) and \(\mu \geq 0\) then
\(\left|\int_{A_a}\ln |x-y| d\mu (y)\right| <+\infty\) for \(\mu-\)a.e.\ \(x\in A_a\).
Indeed let \(G_0\) be the Green function in \(A_a\) with Dirichlet boundary condition, i.e., the solution to
\begin{equation*}
\left\{
\begin{array}{rcll}
-\Delta_xG_0(\cdot,y) &= & \delta_y & \text{ in } A_a \\
G_0(\cdot,y) &=& 0 & \text{ on } \p A_a.
\end{array}
\right.
\end{equation*} 
As in Proposition \ref{prop:properties_Green_function}  we have that \(G_0(x,y)=-\frac{1}{2\pi}\ln |x-y|+\varphi_0(x,y)\) for \(x,y\in A_a\) and \(\varphi_0\in \C^\infty(\overline{A}_a\times A)\cap \C^\infty(A_a\times \overline{A}_a)\). We note that \(G_0\geq 0\) from the maximum principle \cite[Proposition 6.1]{Ponce_2016}. We define \(h_0(x)=\int_{A_a}G_0(x,y) d \mu(y)\) for \(x\in A_a\). Then, thanks to Fubini's Theorem we can check that \(h_0 \in L^1(A_a)\). We can also check that
\begin{equation*}
\left\{
\begin{array}{rcll}
-\Delta h_0 &= & \mu & \text{ in } A_a \\
h_0 &=& 0 & \text{ on } \p A_a.
\end{array}
\right.
\end{equation*}
Furthermore, since \(\mu \in H^{-1}(A_a)\) we have that \(h_0\in H^1_0(A_a)\) and \( h_0\geq 0\) thanks to the maximum principle. Hence \( \langle \mu , h_0\rangle_{\langle H^{-1},H^1_0\rangle}\geq 0\). We can then apply Theorem 1 of \cite{Brezis_Browder_1979} to deduce that  \(h_0\in L^1(A_a,\mu)\). But it can be shown that \(x\mapsto \int_{A_a}\varphi_0(x,y) d\mu(y)\in \C(\overline{A}_a)\) hence we obtain that 
\begin{equation}\label{eq:Browder1}
\left| \int_{A_a}\ln |x-y| d \mu(y) \right| <+\infty \text{ for } \mu-\text{ a.e. } x\in A_a.
\end{equation} 

Now, for \(\mu \in \mathcal{K}:=\{\mu \in \M(A_a); \exists j\in \J^a\cap \J_{\text{rad}} \text{ with } \mu=\curl j\}\) we can write that for \(\mu\)-a.e.\ \(x\in A_a\) and for \(\delta>0\),
\begin{align*}
\left| \int_{A_a}\ln |x-y| d(\mu_n-\mu)(y) \right| \leq \left| \int_{A_a\setminus B_\delta(x)} \ln |x-y| d(\mu_n-\mu)(y)\right|  \\
\quad + \left| \int_{B_\delta(x)}\ln |x-y| d(\mu_n-\mu)(y)\right|.
\end{align*}
The first term in the right-hand side converges to zero when \(n \to +\infty\) because \(\mu_n \rightharpoonup \mu\) in \(\M(A_a)\) and \(y \mapsto \ln |x-y|\in \C(\overline{A_a\setminus B_\delta(x)})\) (the proof is the same as for \eqref{eq:cv_varphi}). For the second term we use that \(\mu \geq 0\) and hence  \(y \mapsto \ln |x-y| \in L^1(A_a,|\mu|)\) from \eqref{eq:Browder1} and the dominated convergence theorem  to obtain that 
\begin{align*}
\sup_{n\in \mathbb{N}} \left|\int_{B_\delta(x)} \ln |x-y| d(\mu_n-\mu)(y) \right|&\leq 2\left|\int_{B_\delta(x)}\ln |x-y| d|\mu|(y) \right| \xrightarrow[\delta \to 0]{} 0.
\end{align*}
This proves that, if \(h \) is a solution to \eqref{eq:potential} with \(\mu\in \mathcal{K}\) then
\begin{equation}\label{eq:Browder2}
 h(x)= \frac{\alpha}{R_2+a}\int_{\p B_{R_2}} G(x,y) d\mathcal{H}^1(y)-\int_{A_a} G(x,y) d \mu(y) \text{ for }\mu-\text{a.e. } x\in A_a.
\end{equation}
Now let \(\mu_1, \mu_2 \in \mathcal{K}\) with \(\mu_1,\mu_2\geq 0\) we call \(h^{\mu_1}\) the solution to \eqref{eq:potential} with \(\mu=\mu_1\) and \(h^{\mu_2}\) the solution to \eqref{eq:potential} with \(\mu=\mu_2\). Let \(\tilde{\eta}_\delta\in \C^\infty_c(\R^2)\) be such that \(\tilde{\eta}_\delta\equiv 1\) on \(\{x\in A_a; \dist(x,\p B_{R_2+a})>\delta\}\) and \(\tilde{\eta}_\delta=0\) on \(\p B_{R_2+a}\) with \(\tilde{\eta}_\delta \to 1\) as \(\delta \to 0\) a.e. We can apply again Theorem 1 in \cite{Brezis_Browder_1979} to obtain that 
\begin{align*}
\langle \mu_1, \tilde{\eta}_\delta h^{\mu_2} \rangle_{\langle {H^{-1},H^1_0}\rangle} &= \int_{A_a}\tilde{\eta}_\delta h^{\mu_2} d \mu_1 =\int_{A_a} h^{\mu_1} d\mu_2
\end{align*}
for \(\delta>0\) small enough since \(\supp \mu_1 \subset \overline{B}_{R_2}\setminus B_{R_1}\). But on the other hand
\begin{align*}
\langle \mu_1, \tilde{\eta}_\delta h^{\mu_2} \rangle_{\langle {H^{-1},H^1_0}\rangle} &= \langle \Delta h^{\mu_1}, \tilde{\eta}_\delta h^{\mu_2} \rangle_{\langle {H^{-1},H^1_0}\rangle}\\
&=-\int_{A_a}\nabla h^{\mu_1} \cdot \nabla (\tilde{\eta}_\delta h^{\mu_2}) \\
&=-\int_{A_a}\nabla h^{\mu_1} \cdot \nabla h^{\mu_2} \tilde{\eta}_\delta+\int_{A_a} \nabla \tilde{\eta}_\delta\cdot \nabla h^{\mu_2} h^{\mu_1}.
\end{align*}
By letting \(\tilde{\eta}_\delta \to 1\) in \(A_a\) and by using that \(h_{\mu_1}=0\) on \(\p B_{R_1}\) and \(\p_\nu h_{\mu_2}\) is a constant on \(\p B_{R_2+a}\) (hence better than just in \(H^{-1/2}(\p B_{R_2+a}\))),  we obtain
\begin{equation}\label{eq:*}
\int_{A_a}\nabla h^{\mu_1} \cdot \nabla h^{\mu_2}=-\int_{A_a}h^{\mu_2} d \mu_1 +\int_{ \p A_a}\p_\nu h^{\mu_1} h^{\mu_2}.
\end{equation}
In particular, if \(\mu \geq 0\), by taking \(\nu=\mu\) in \eqref{eq:*} and by using \eqref{eq:Browder2} we obtain \eqref{eq:energy_measure}. Now if \(\mu\) is not necessarily non-negative then we write \(\mu=\mu^+-\mu^-\) with \(\mu^{\pm}\geq 0\). An important observation is that, since \(\mu=\curl j\) for some \(j \in \J^a\cap \J_{\text{rad}}\) then \(\mu\) can be written as a product \(\mu=\mu_r \otimes \mathcal{L}^1_{\lfloor (0,2\pi)}\) where \(\mathcal{L}^1\) is the Lebesgue measure and \(\mu_r\) is a 1D measure depending only on the radius, in polar coordinates. We can do the same for \(\mu^+\) and \(\mu^-\). Moreover in 1D we do have the embedding \(\M( (R_1,R_2+a)) \subset H^{-1}((R_1,R_2+a))\). Since we work in an annulus and then, \(r\geq R_1>0\) we deduce that \(\mu^+\) and \(\mu^-\) are in \(H^{-1}(A_a)\). Then we can write the relation \eqref{eq:*} for \((h^{\mu^+},h^{\mu^+}) \), \((h^{\mu^+},h^{\mu^-})\) and \((h^{\mu^-},h^{\mu^-})\) and combining these relations we obtain \eqref{eq:energy_measure}.
\end{proof}

\begin{proposition}\label{prop:Inner_variations_mu}
A minimizer \(\mu_0^a\) of \(H^a\) in \(\mathcal{K}^a=\{\mu \in \M(A_a); \exists j\in \J^a\cap \J_{\text{rad}} \text{ with } \mu=\curl j\}\) satisfies the following relation for every \(X\in \C^\infty_c(A,\R^2)\) such that \(X\) is radial:\footnote{we say that \(X\) is radial if it can be written as \(X=\chi(r) \vec e_r\) for some function \(\chi\).}
\begin{multline}\label{eq:inner_equal_zero}
\iint_{A_a\times A_a} \left(\nabla _x G(x,y)\cdot X(x) + \nabla _y G(x,y)\right) \cdot X(y) \, d\mu_0^a(x)d\mu_0^a(y) \\
-\frac{\alpha}{(R_2+a)} \Bigl( \int_{\p B_{R_2+a}} \int_{A_a} \nabla_x G(x,y)\cdot X(x) \, d\mu_0^a(x) d\mathcal{H}^1(y) \\
+ \int_{\p B_{R_2+a}} \int_{A_a} \nabla _y G(x,y)\cdot X(y) d\mu_0^a(y) d\mathcal{H}^1(x)\Bigr)=0.
\end{multline}
\end{proposition}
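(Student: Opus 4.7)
The strategy is to perform an inner variation generated by $X$. Given a radial $X\in \C^\infty_c(A,\R^2)$, let $\phi_t:\R^2\to \R^2$ denote its flow, which is a diffeomorphism for $|t|$ small and reduces to the identity outside $\supp X\Subset A$. I would use $\mu_t:=(\phi_t)_*\mu_0^a$ as a competitor and read off the Euler--Lagrange relation from $\frac{d}{dt}\big|_{t=0}H^a(\mu_t)=0$.

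The first task is to verify that $\mu_t\in \mathcal{K}^a$ so that this perturbation is admissible. Since $X$ is radial, $\phi_t$ commutes with rotations and $\mu_t$ retains the product form $\mu_r\otimes \mathcal{L}^1_{\lfloor (0,2\pi)}$. Because $X$ is compactly supported in $A$, the flow $\phi_t$ is the identity in a neighborhood of $\partial A_a$ and on $B_{R_2+a}\setminus \overline{B}_{R_2}$, so $\supp \mu_t\subset \overline{B}_{R_2}\setminus B_{R_1}$. Pushforward by a smooth diffeomorphism preserves $H^{-1}$-regularity, hence $\mu_t\in H^{-1}(A_a)$; solving \eqref{eq:potential} with datum $\mu_t$ yields a radial $h_t\in H^1(A_a)$, and $j_t:=\nabla^\perp h_t$ lies in $\J^a\cap \J_{rad}$ with $\curl j_t=\mu_t$, so indeed $\mu_t\in \mathcal{K}^a$.

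Next I would differentiate $H^a(\mu_t)$ at $t=0$. The total variation $\tfrac12\|\mu_t\|(A_a)$ is constant in $t$ since $|(\phi_t)_*\mu_0^a|=(\phi_t)_*|\mu_0^a|$ and $\phi_t$ is a diffeomorphism; the purely boundary--boundary term in \eqref{eq:energy_measure} does not involve $\mu$ and is inert. The remaining three terms give, using $\phi_t\equiv \Id$ on $\partial B_{R_2+a}$,
\begin{align*}
H^a(\mu_t)-H^a(\mu_0^a)&=\tfrac12\iint_{A_a\times A_a}[G(\phi_t(x),\phi_t(y))-G(x,y)]\,d\mu_0^a(x)d\mu_0^a(y)\\
&\quad-\tfrac{\alpha}{2(R_2+a)}\!\int_{\p B_{R_2+a}}\!\!\int_{A_a}\![G(\phi_t(x),y)-G(x,y)]\,d\mu_0^a(x)d\mathcal{H}^1(y)\\
&\quad-\tfrac{\alpha}{2(R_2+a)}\!\int_{\p B_{R_2+a}}\!\!\int_{A_a}\![G(x,\phi_t(y))-G(x,y)]\,d\mu_0^a(y)d\mathcal{H}^1(x).
\end{align*}
To pass the derivative under the double integral near the diagonal of $G$, I would use the splitting $G(x,y)=-\tfrac{1}{2\pi}\ln|x-y|+\varphi(x,y)$ from Proposition \ref{prop:properties_Green_function}, together with the Lipschitz bound $|X(\phi_t(x))-X(\phi_t(y))|\leq \|\nabla X\|_\infty|\phi_t(x)-\phi_t(y)|$, which yields a uniform bound $|\p_t G(\phi_t(x),\phi_t(y))|\leq C$ in $(x,y,t)$. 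Since $|\mu_0^a|\otimes|\mu_0^a|$ is finite, dominated convergence produces the term $\tfrac12\iint[\nabla_xG\cdot X(x)+\nabla_yG\cdot X(y)]\,d\mu_0^a d\mu_0^a$. The two boundary contributions are immediate because $X$ vanishes near $\partial B_{R_2+a}$, so no singularity arises. Equating the derivative to zero and multiplying by $2$ gives exactly \eqref{eq:inner_equal_zero}.

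The main obstacle is the admissibility check $\mu_t\in\mathcal{K}^a$, which uses crucially both the radial symmetry and the compact support in $A$ of $X$ (to preserve the product structure and the support condition), together with the dominated-convergence argument needed to differentiate through the logarithmic singularity of $G$ on the diagonal of $A_a\times A_a$.
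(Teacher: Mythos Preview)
Your proposal is correct and follows essentially the same route as the paper: both use the pushforward $\mu_t$ under a radial deformation (the paper uses $\mathrm{Id}+tX$, you use the flow $\phi_t$, which agree to first order), observe that the total variation and the purely boundary terms are inert, and differentiate under the integral sign using the cancellation $\nabla_xG\cdot X(x)+\nabla_yG\cdot X(y)=O(1)$ coming from the Lipschitz bound on $X$ (the paper packages this as Lemma~\ref{lem:domination}). Your admissibility check for $\mu_t\in\mathcal{K}^a$ is slightly more detailed than the paper's one-line remark, and your final ``multiply by $2$'' is exactly the discrepancy between $\tfrac{d}{dt}H^a(\mu_t)|_{t=0}$ and the stated identity~\eqref{eq:inner_equal_zero}.
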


\begin{proof}
Let \(\mu\) be a measure in \(\mathcal{K}^a\). We use variations of the form \(\mu_t:=(\text{ Id} +tX)_\# \mu\) with \(X\) a radial vector field and \(X\in \C^\infty_c(A,\R^2)\). We notice that, since \(X\) is radial, \(\mu_t\) is indeed in \(\mathcal{K}^a\).

From the definition of the total variation we see that \(\| \mu_t\|(\overline{A})=\| \mu\|(\overline{A})\). Now
by definition of the push-forward of a measure
\begin{align*}
H^a(\mu_t)&= \frac{\|\mu\|(\overline{A})}{2}+\frac12 \iint_{A_a\times A_a} G(x+tX(x),y+tX(y)) d\mu(x) d\mu(y) \\
& +\frac{\alpha^2}{(R_2+a)^2} \int_{\p B_{R_2+a}}\int_{\p B_{R_2+a}} G(x,y) d\mathcal{H}^1(x) d \mathcal{H}^1(y) \\
&- \frac{\alpha}{R_2+a} \int_{\p B_{R_2}}\int_{A_a} G(x+tX(x),y) d\mu(x) d \mathcal{H}^1(y) \\
&- \frac{\alpha}{R_2+a} \int_{\p B_{R_2}} \int_{A_a} G(x,y+tX(y)) d\mu(y) d \mathcal{H}^1(x)\\
&-\pi \alpha^2\ln \frac{R_2+a}{R_2}.
\end{align*}
An application of the theorem of derivation under the integral sign shows that \[\frac{d}{dt}\Big|_{t=0 }H^a((\text{ Id} +tX)_\# \mu))\] exists and is given by the left-hand side of \eqref{eq:inner_equal_zero}. Since \(\mu_0^a\) minimizes \(H^a\) we must have \(\frac{d}{dt}\Big|_{t=0 }H^a((\text{ Id} +tX)_\# \mu_0^a))=0\).  To dominate the derivative, to apply the theorem of derivation under the integral sign,  we use that, for every \(y\in \p B_{R_2+a}\) and for every \(x\in A_a\) we have \(|\nabla_xG(x,y)\cdot X(x)|\leq C\) since \(G\in \C^\infty(A_a\times \overline{A_a})\). In the same way for every \(x\in \p B_{R_2+a}\) and every \(y \in A_a\) we can see that \(|\nabla_yG(x,y)\cdot X(y)|\leq C\) because \(G\in \C^\infty(\overline{A_a}\times A_a)\). We can also observe, thanks to  Lemma \ref{lem:domination} below, that for every \(x,y \in A_a\) we have 
\[ | \nabla_xG(x,y)\cdot X(x)+\nabla_yG(x,y)\cdot X(y)|\leq C.\]
\end{proof}

\begin{lemma}\label{lem:domination}
Let \(X\in \C^\infty_c(A,\R^2)\), let \(G\) be the Green function \eqref{eq:Green} then there exist \(Q_i\in \C(\R^2\setminus \{0\})\) for \(i=1,\dots,3\) with \(Q_i\) which is bounded and independent of \(X\) and \(\psi_{i,X}\in \C_0(\R^2\times \R^2)\) for \(i=1,\dots,3\) such that 
\begin{multline}\label{eq:majoration_derivee}
\nabla _x G(x,y)\cdot X(x) + \nabla _y G(x,y)\cdot X(y) = Q_1(x-y)\psi_{1,X}(x,y)+ Q_2(x-y)\psi_{2,X}(x,y)\\+ Q_3(x-y)\psi_{3,X}(x,y)\
+\nabla _x \varphi(x,y)\cdot X(x) + \nabla _y \varphi(x,y)\cdot X(y),
\end{multline}
where \(\varphi\) is defined in Proposition \ref{prop:properties_Green_function}.
We observe that from Item 3) of Proposition \ref{prop:properties_Green_function} we have that \((x,y)\in A\times A \mapsto \nabla _x \varphi(x,y)\cdot X(x) + \nabla _y \varphi(x,y)\cdot X(y)\) is continuous and bounded on \(A\times A\).
\end{lemma}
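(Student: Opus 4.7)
The plan is to reduce the identity \eqref{eq:majoration_derivee} to an algebraic Taylor expansion that exhibits the crucial cancellation in the singular part of the Green function. First I would invoke Proposition \ref{prop:properties_Green_function} to split $G(x,y)=-\frac{1}{2\pi}\ln|x-y|+\varphi(x,y)$ and differentiate:
\[
\nabla_x G(x,y)=-\frac{1}{2\pi}\frac{x-y}{|x-y|^2}+\nabla_x\varphi(x,y),\qquad \nabla_y G(x,y)=\frac{1}{2\pi}\frac{x-y}{|x-y|^2}+\nabla_y\varphi(x,y).
\]
Substituting into $\nabla_xG(x,y)\cdot X(x)+\nabla_yG(x,y)\cdot X(y)$ already produces the last two terms of \eqref{eq:majoration_derivee}, so the whole matter reduces to rewriting the singular contribution
\[
S(x,y):=\frac{1}{2\pi}\frac{x-y}{|x-y|^2}\cdot\bigl(X(y)-X(x)\bigr)
\]
as a sum $\sum_i Q_i(x-y)\psi_{i,X}(x,y)$ with the required structure.

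The key step is the integral Taylor identity, valid since $X\in\mathcal{C}^\infty_c(A,\mathbb{R}^2)$:
\[
X(y)-X(x)=\int_0^1 \mathrm{D}X(x+s(y-x))\,(y-x)\,ds.
\]
Plugging this in and expanding in coordinates $z=x-y=(z_1,z_2)$, I get
\[
S(x,y)=-\frac{1}{2\pi}\sum_{j,k=1}^{2}\frac{z_jz_k}{|z|^2}\,\int_0^1\partial_jX_k\bigl(x+s(y-x)\bigr)\,ds.
\]
Using the symmetry of $z_jz_k$ in $(j,k)$, the four terms collapse into three independent kernels, and I would take
\[
Q_1(z)=\frac{z_1^2}{|z|^2},\quad Q_2(z)=\frac{z_2^2}{|z|^2},\quad Q_3(z)=\frac{z_1z_2}{|z|^2},
\]
together with
\[
\psi_{1,X}(x,y)=-\tfrac{1}{2\pi}\!\!\int_0^1\partial_1X_1,\quad \psi_{2,X}=-\tfrac{1}{2\pi}\!\!\int_0^1\partial_2X_2,\quad \psi_{3,X}=-\tfrac{1}{2\pi}\!\!\int_0^1(\partial_1X_2+\partial_2X_1),
\]
all evaluated along $x+s(y-x)$ for $s\in[0,1]$.

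It remains to check the claimed regularity. The $Q_i$ are bounded by $1$ and continuous on $\mathbb{R}^2\setminus\{0\}$, and obviously do not depend on $X$. The $\psi_{i,X}$ are jointly continuous in $(x,y)$ by differentiation under the integral sign and by smoothness of $X$; since $\mathrm{D}X$ is compactly supported in $A$, each $\psi_{i,X}(x,y)$ vanishes unless the segment $[x,y]$ meets $\mathrm{supp}(X)$, which gives membership in $\mathcal{C}_0$ in the sense needed by the application of the lemma (the measures $\mu_0^a$ are supported in $\overline{B}_{R_2}\setminus B_{R_1}\Subset A_a$). There is no substantial obstacle: the lemma is essentially the classical observation that the Riesz-type kernel $\frac{x-y}{|x-y|^2}$ tested against the difference $X(y)-X(x)$ of a Lipschitz vector field is bounded; the Taylor remainder absorbs exactly one power of $|x-y|$, neutralizing the singularity while leaving the homogeneous-degree-zero factors $z_jz_k/|z|^2$ as the $Q_i$.
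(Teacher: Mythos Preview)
Your argument is correct and follows the paper's proof essentially verbatim: the same splitting \(G=-\tfrac{1}{2\pi}\ln|x-y|+\varphi\), the same integral Taylor expansion of \(X(y)-X(x)\), and the same choice \(Q_1=z_1^2/|z|^2\), \(Q_2=z_2^2/|z|^2\), \(Q_3=z_1z_2/|z|^2\) with the \(\psi_{i,X}\) given by the averaged partial derivatives of \(X\) along the segment. The only cosmetic difference is that the paper extends \(X\) by zero outside \(A\) explicitly and asserts \(\psi_{i,X}\in\C_0(\R^2\times\R^2)\) directly, whereas you hedge slightly on the \(\C_0\) claim; in fact the full \(\C_0(\R^2\times\R^2)\) property does hold, since the parameter set \(\{s\in[0,1]:x+s(y-x)\in\supp X\}\) has measure at most \(\operatorname{diam}(\supp X)/|x-y|\).
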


\begin{proof}
From Proposition \ref{prop:properties_Green_function} we can write that \(G(x,y)=-\frac{1}{2\pi}\ln |x-y|+\varphi(x,y)\) where \(\varphi\) is in \(\C^\infty(A_a\times A_a)\). Thus
\begin{align*}
\nabla _x G(x,y)\cdot X(x) + \nabla _y G(x,y)\cdot X(y) & =\frac{-1}{2\pi}\left(\frac{x-y}{|x-y|^2}\cdot (X(x)-X(y))\right) \\
& \phantom{aaaa} +\nabla _x \varphi(x,y)\cdot X(x) + \nabla _y \varphi(x,y)\cdot X(y)
\end{align*}

By using Taylor's expansion theorem and by extending \(X\) by zero outside \(A\), we can write that for every \(x,y\in A\)
\begin{align*}
X(x)-X(y)=\int_0^1 DX(sx+(1-s)y)(x-y)\, ds .
\end{align*}
Thus
\begin{align*}
&\frac{x-y}{|x-y|^2}\cdot (X(x)-X(y)) =\sum_{i,k=1}^2\int_0^1 \p_kX^i(sx+(1-s)y)ds (x-y)_k\frac{(x-y)_i}{|x-y|^2} \\
&= \frac{(x_1-y_1)^2}{|x-y|^2} \int_0^1 \p_1X^1 (sx+(1-s)y) \, ds \\
& +\frac{(x_2-y_2)^2}{|x-y|^2} \int_0^1 \p_2X^2 (sx+(1-s)y) \, ds  \\
&+ \frac{(x_1-y_1)(x_2-y_2)}{|x-y|^2}\left(\int_0^1 \left[(\p_1X^2+\p_2X^1)(sx+(1-s)y) \right] \, ds \right).
\end{align*}

Thanks to the theorems of continuity and limits under the integral sign we can see that for \(i,k\in \{1,2\}\) we have that \((x,y)\mapsto \int_0^1 \p_i X^k (sx+(1-s)y)ds\) is in \(\C_0(\R^2\times \R^2)\) (the set of continuous functions vanishing at infinity). We obtain \eqref{eq:majoration_derivee} with
\[ Q_1(z)=\frac{z_1^2}{|z|^2}, \quad Q_2(z)=\frac{z_2^2}{|z|^2}, \quad Q_3(z)= \frac{z_1z_2}{|z|^2}.\]

\end{proof}

\begin{proposition}\label{prop:stress_energy_tensor_divfree}
Let \(\mu\) be in \(\mathcal{K}^a\), let \(h^\mu\) be the potential associated to \(\mu\) via \eqref{eq:potential}. For every \(X\in \C^\infty_c(A,\R^2)\) with \(X\) radial, we have

\begin{multline}\label{eq:derivative_inner}
\iint_{A_a\times A_a} \left(\nabla_x G(x,y)\cdot X(x) + \nabla_y G(x,y)  \cdot X(y)\right) d\mu(x)d\mu(y) \\
-\frac{\alpha}{(R_2+a)} \Bigl[  \int_{\p B_{R_2+a}} \int_{A_a}\nabla_x G(x,y)\cdot X(x) d\mu(x) d\mathcal{H}^1(y) \\
+ \int_{\p B_{R_2+a}} \int_{A_a} \nabla_y G(x,y)\cdot X(y) d\mu(y) d\mathcal{H}^1(x)\Bigr] =\int_{A_a} [h^\mu,h^\mu]:DX
\end{multline}
where \( [h^\mu,h^\mu]_{ij}=2\p_ih^\mu\p_jh^\mu-|\nabla h^\mu|^2\delta_{ij}\) for \(i,j=1,2\), \(\delta_{ij}\) is the Kronecker symbol and \(P:Q=\tr (PQ^T)\) denotes the inner product of matrices \(P,Q\) in \(M_2(\R)\).
\end{proposition}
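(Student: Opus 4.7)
The plan is to derive \eqref{eq:derivative_inner} by rewriting both sides as $-2\int_{A_a} X\cdot\nabla h^\mu\,d\mu$, first for smooth data and then by approximation. The identity splits conceptually into two pieces: a Green-function rearrangement that handles the left-hand side, and a divergence-of-stress-energy-tensor computation that handles the right-hand side.

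The first step is to rewrite the LHS of \eqref{eq:derivative_inner}. From the representation formula \eqref{eq:representation_h}, one has
\[
\nabla h^\mu(x) = -\int_{A_a}\nabla_x G(x,y)\,d\mu(y) + \frac{\alpha}{R_2+a}\int_{\partial B_{R_2+a}}\nabla_x G(x,y)\,d\mathcal{H}^1(y).
\]
Pairing this with $X(x)$, integrating against $d\mu(x)$, and using the symmetry $G(x,y)=G(y,x)$ of Proposition \ref{prop:properties_Green_function} together with Fubini to symmetrize the $d\mu\otimes d\mu$ and $d\mu\otimes d\mathcal{H}^1$ terms, identifies the LHS of \eqref{eq:derivative_inner} with $-2\int_{A_a}X\cdot\nabla h^\mu\,d\mu$. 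Integrability for Fubini is immediate for smooth $\mu$ and, for general $\mu\in\mathcal K^a$, follows from the ingredients already used in Proposition \ref{prop:energy_measure_green}: Proposition \ref{prop:properties_Green_function}(3) controls the boundary integrals while Lemma \ref{lem:domination} controls the diagonal singularity in the $A_a\times A_a$ integral.

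For smooth $\mu=f\,dx$, $h^\mu$ is classical and a direct computation from $[h,h]_{ij}=2\partial_i h\partial_j h-|\nabla h|^2\delta_{ij}$ gives the pointwise identity $\div[h^\mu,h^\mu]=2\,\Delta h^\mu\,\nabla h^\mu$, which is the divergence-free property of the stress-energy tensor for a harmonic map when $\Delta h^\mu=0$. Since $X\in\C^\infty_c(A,\R^2)$ and $\overline{A}\subset A_a$, the field $X$ vanishes near $\partial A_a$, so integration by parts yields
\[
-2\int_{A_a}X\cdot\nabla h^\mu\,d\mu=-\int_{A_a}X\cdot\div[h^\mu,h^\mu]\,dx=\int_{A_a}[h^\mu,h^\mu]:DX\,dx,
\]
which is exactly \eqref{eq:derivative_inner} in the smooth case.

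For general $\mu\in\mathcal K^a$, the final step is approximation. Since $\mu\in\mathcal{K}^a$ gives $\nabla h^\mu\in L^2(A_a)$, the natural idea is to approximate $h^\mu$ by smooth radial functions $h_n$ satisfying the same boundary conditions as in \eqref{eq:potential} and with $h_n\to h^\mu$ strongly in $H^1(A_a)$; setting $\mu_n:=\Delta h_n$ one has $h_n=h^{\mu_n}$ and $\mu_n\to\mu$ in $H^{-1}(A_a)$, and $\mu_n\in\mathcal K^a$ by construction. The smooth-case identity holds for each $\mu_n$. The right-hand side is quadratic in $\nabla h^{\mu_n}$, so strong $H^1$-convergence together with the boundedness of $DX$ gives convergence of $\int[h^{\mu_n},h^{\mu_n}]:DX\,dx$; the left-hand side passes to the limit by the continuity arguments used in Proposition \ref{prop:energy_measure_green}, controlled by Lemma \ref{lem:domination}. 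The delicate point — and the main obstacle — is precisely this \emph{strong} $L^2$-convergence of the potentials, since weak convergence of $\mu_n$ to $\mu$ alone is not enough for a quadratic functional in $\nabla h^{\mu_n}$; this is where the radial structure of $\mathcal K^a$ is essential, as it reduces the construction of $h_n$ to a well-understood one-dimensional approximation on $(R_1,R_2+a)$.
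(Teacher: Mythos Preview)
Your strategy coincides with the paper's: establish \eqref{eq:derivative_inner} for smooth \(\mu\) via the Green representation and the identity \(\dive[h,h]=2\Delta h\,\nabla h\), then approximate. The smooth case is handled identically.

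The difference lies in the approximation step, and there your argument has a gap. The paper mollifies \(\mu\) by convolution, which simultaneously yields strong \(H^{-1}\)-convergence (hence strong \(H^1\)-convergence of potentials, settling the right-hand side) \emph{and} the measure-theoretic properties \(\mu_n\rightharpoonup\mu\), \(|\mu_n|\rightharpoonup|\mu|\). These last properties are what drive the convergence of the left-hand side: the double integral over \(A_a\times A_a\) is quadratic in \(\mu_n\) against a kernel that is merely bounded and continuous off the diagonal (Lemma \ref{lem:domination}), and the paper invokes Proposition \ref{prop:Appendixb} in Appendix B, which explicitly requires \(|\mu_n|\rightharpoonup|\mu|\) and non-atomicity of \(\mu\), to pass to the limit.

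Your approximation instead starts from \(h_n\to h^\mu\) strongly in \(H^1\) and sets \(\mu_n=\Delta h_n\). This handles the right-hand side cleanly, but it only gives \(\mu_n\to\mu\) in \(H^{-1}\), which is not enough for the quadratic-in-\(\mu_n\) left-hand side; the ``continuity arguments used in Proposition \ref{prop:energy_measure_green}'' that you invoke rest on the convolution-specific bound \(|\mu_n-\mu|\le 2|\mu|\), which a generic \(H^1\)-approximation of \(h^\mu\) does not supply. The radial structure can rescue this (since \(\mu\in\mathcal{M}\) forces \(r(h^\mu)'\in BV((R_1,R_2+a))\), a one-dimensional BV approximation gives both \(h_n'\to (h^\mu)'\) in \(L^2\) and \(|\mu_n|\rightharpoonup|\mu|\)), but you need to say this rather than defer to Proposition \ref{prop:energy_measure_green}. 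You should also verify that your \(h_n\) can be chosen with \(\Delta h_n\) supported in \(\overline{A}\), so that \(\mu_n\in\mathcal{K}^a\).
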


\begin{proof}
Again we start with the case where \(\mu\) is in \(\mathcal{K}^a\cap\C^\infty(A_a)\). In that case the Green representation formula shows that for every \(x\in A_a\),
\begin{equation*}
\nabla h^\mu(x)=  \frac{\alpha}{R_2+a}\int_{\p B_{R_2}} \nabla_xG(x,y) d\mathcal{H}^1(y)-\int_{A_a} \nabla_xG(x,y) \mu(y) d y.
\end{equation*}
Thus, the left-hand side of \eqref{eq:derivative_inner} can be seen to be equal to
\begin{equation*}
\int_{A_a} \nabla h^\mu(x)\cdot X(x) d\mu(x) +\int_{A_a}\nabla h^\mu(y)\cdot X(y) d\mu(y).
\end{equation*}
On the other hand a direct computation shows that \(\dive [h^\mu,h^\mu]=2\Delta h^\mu \nabla h^\mu\). This shows that the left-hand side of \eqref{eq:derivative_inner} is equal to
\begin{equation*}
\int_{A_a} \dive [h^\mu,h^\mu]\cdot X(x) dx=\int_{A_a} [h^\mu,h^\mu]:DX(x) dx.
\end{equation*}

In the general case, we proceed again by approximation and we follow \cite[Proposition 6.4]{Peszek_Rodiac_2025}.We first extend \(\mu\) by zero outside \(A\) and we call \(\tilde{\mu}\) the new measure. We take a sequence \((\tilde{\mu}_n)_n \subset \C^\infty (\R^2)\) such that
\begin{itemize}
\item  \(\supp \tilde{\mu}_n \Subset A_a\) for \(n \) large enough,
\item \(\tilde{\mu}_n\in H^{-1}(\R^2)\) and \(\tilde{\mu}_n\) converges strongly to \(\tilde{\mu}\) in \(H^{-1}(\R^2)\). Note that this implies that \(\tilde{\mu}_n\) converges strongly to \(\mu\) in \(H^{-1}(A_a)\),
\item \(\tilde{\mu}_n \rightharpoonup \tilde{\mu}\) in the sense of measures \(\mathcal{M}(\R^2)\) and \(|\mu_n| \rightharpoonup |\mu|\) in the sense of measures in \(\mathcal{M}(\R^2)\).
\end{itemize}
The construction of such an approximating sequence is done by convolution with a regularizing kernel, cf.\ \cite[Theorem 2.2]{Ambrosio_Fusco_Pallara_2000}. By the strong convergence of \(\tilde{\mu}_n\) in \(H^{-1}(A_a)\) it follows that the associated potentials \(h^{\tilde{\mu}_n}\) converge strongly in \(H^1(A_a)\) to \(h^{\tilde{\mu}}\). Thus \([h^{\tilde{\mu}_n},h^{\tilde{\mu}_n}]\xrightarrow [n\to +\infty]{L^1(A_a)} [h^{\tilde{\mu}},h^{\tilde{\mu}}]\) and
\begin{equation*}
\int_{A_a} [h^{\tilde{\mu}_n},h^{\tilde{\mu}_n}]:DX(x) dx \xrightarrow[n \to +\infty]{}  \int_{A_a} [h^{\tilde{\mu}},h^{\tilde{\mu}}]:DX(x) dx.
\end{equation*}

Now we note that for every \(y\) in \(\p B_{R_2+a}\) the map \(x \mapsto \nabla_x G(x,y)\cdot X(x)\) is in \(\C_c(A_a)\). By definition of the convergence in the sense of measures, for every \(y\) in \(\p B_{R_2+a}\)
\[ \int_{A_a}\nabla_xG(x,y)\cdot X(x) d\tilde{\mu}_n (x) \xrightarrow[]{n \to +\infty} \int_{A_a}\nabla G_x(x,y) \cdot X(x) d \tilde{\mu}(x). \]
From Proposition \ref{prop:properties_Green_function} we know that \( (x,y) \mapsto \nabla_xG(x,y)\cdot X(x)\) is bounded on \(\overline{A}\times \p B_{R_2+a}\). Hence
\begin{align*}
\left| \int_{A_a} \nabla_x G(x,y)\cdot X(x) d \tilde{\mu}_n(x) \right| &\leq \| \nabla_xG(x,y)\cdot X(x)\|_{L^\infty(\overline{A}\times \p B_{R_2+a}) } \|\tilde{\mu}_n\|(A_a) \leq C
\end{align*}
since the convergence in the sense of measures implies that the total variation of \( \tilde{\mu}_n\) is uniformly bounded in \(n\). Thus the dominated convergence theorem shows that
\begin{multline*}
 \int_{\p B_{R_2+a}} \int_{A_a} \nabla_x G(x,y)\cdot X(x) d\tilde{\mu}_n(x) d\mathcal{H}^1(y) \\
 \xrightarrow[n \to +\infty]{}   \int_{\p B_{R_2+a}} \int_{A_a} \nabla_x G(x,y)\cdot X(x) d\tilde{\mu}(x) d\mathcal{H}^1(y).
\end{multline*}

To pass to the limit in the term
\begin{multline*}
\iint_{A_a\times A_a} \left(\nabla_x G(x,y)\cdot X(x) + \nabla_y G(x,y)  \cdot X(y) \right)d\tilde{\mu}_n(x)d\tilde{\mu}_n(y)\\
=\iint_{\R^2 \times \R^2} \left(\nabla_x G(x,y)\cdot X(x) + \nabla_y G(x,y)  \cdot X(y) \right)d\tilde{\mu}_n(x)d\tilde{\mu}_n(y)
\end{multline*}
 the main ingredient is Lemma \ref{lem:domination}.  Once we know that \(\left(\nabla_x G(x,y)\cdot X(x) + \nabla_y G(x,y) \right) \cdot X(y)\) can be decomposed as 
\begin{multline*}
Q_1(x-y)\psi_{1,X}(x,y)+ Q_2(x-y)\psi_{2,X}(x,y)\\+ Q_3(x-y)\psi_{3,X}(x,y)\
+\nabla _x \varphi(x,y)\cdot X(x) + \nabla _y \varphi(x,y)\cdot X(y) \text{ in } \R^2\times \R^2
\end{multline*}
 with \(Q_i\) bounded for \(i=1,2,3\) and \(\psi_{i,X}\) continuous and vanishing at infinity, we can use the convergence \(|\mu_n| \rightharpoonup |\mu|\) in the sense of measures in \(\mathcal{M}(\R^2)\) and Proposition \ref{prop:Appendixb} in Appendix to conclude. Note that we use that the mass of the \(\tilde{\mu}_n\) is uniformly bounded to be able to use that testing against functions with compact in \(\R^2\times \R^2\) is equivalent to testing with functions vanishing at infinity.
\end{proof}

\begin{proof}(proof of Proposition \ref{prop:explicit_j})
From Proposition \ref{prop:Inner_variations_mu} and Proposition \ref{prop:stress_energy_tensor_divfree} we find that
\begin{equation}\label{eq:stress_div_free}
\int_{A_a} [h^{\mu_0^a},h^{\mu_0^a}]: DX \, dx=0, 
\end{equation}
for all \(X\in \C^\infty_c(A)\), with \(X=X(r)\vec{e_r}\), (with some abuse of notation) and where \(h^{\mu_0^a}\) is such that \(j_0^a=\nabla^\perp h^{\mu_0^a}\). Since we know from Proposition \ref{prop:jmin_radial} that \(j_0^a=f^a(r) \vec{e_\theta}\) in \(A_a\) we have that \(h^{\mu_0^a}\) is radial. In the basis \((\vec{e_r},\vec{e_\theta})\) we have
\begin{align*}
[h^{\mu_0^a},h^{\mu_0^a}]= 2\nabla h^{\mu_0^a} \otimes \nabla h^{\mu_0^a} -|\nabla h^{\mu_0^a}|^2 \text{Id}=\begin{pmatrix}
-|(h^{\mu_0^a})'|^2 &0 \\
0 & |(h^{\mu_0^a})'|^2.
\end{pmatrix}
\end{align*}
In the same basis we can write \( DX =\begin{pmatrix}
X'(r) &0 \\
0 & \frac{X(r)}{r}
\end{pmatrix}\). Thus \eqref{eq:stress_div_free} leads to 
\begin{align*}
\int_{R_1}^{R_2} |(h^{\mu_0^a})'|^2 \left(X'(r)-\frac{X(r)}{r}\right) r \, dr = \int_{R_1}^{R_2} r^2|(h^{\mu})'|^2 \left(\frac{X(r)}{r}\right)' \, dr =0.
\end{align*}

This translates into \(r^2|{(h^{\mu_0^a})}^{'}|^2=\lambda\) in \((R_1,R_2)\) for some constant \(\lambda\in \R^+\). We thus obtain that \(|j_0^a|=\frac{\lambda}{r}\vec{e_\theta}\). Furthermore, since \(h^{\mu_0^a}\) satisfies \eqref{eq:stress_div_free} and \(\Delta h^{\mu_0^a}=\mu_0^a\) is a Radon measure, we can apply the results \cite[Theorems 1.3, 1.4, 1.6]{Rodiac_2016} to deduce that \(\mu_0^a\) can only be supported on some union of smooth curves. Since \(h^{\mu_0^a}\) is radial, these curves can only be circles in \(A\). Hence \(\mu_0^a\) can be only supported on some circles in \(A\) and on \(\p B_{R_2}\).
\end{proof}

\begin{proposition}
Let \(j_0^a\) be the minimizer of \(F^a\)  in \(\mathcal{J}^a\). Then there exists a constant \(\lambda \in \R^+\) such that \(j_0^a=\frac{\lambda}{r}\vec{e_\theta}\). Moreover \(\mu=\curl j_0^a=0\) in \(A\). In other words the limiting vorticity \(\mu \in \mathcal{M}(B_{R_2+a}\setminus \overline{B}_{R_1})\) is supported on \(\p B_{R_2}\).
\end{proposition}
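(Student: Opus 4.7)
The plan is to use the structural information from Proposition \ref{prop:explicit_j} ($j_0^a = f^a(r)\vec{e_\theta}$ on $A_a$ with $|f^a(r)| = \lambda/r$ for a single constant $\lambda \geq 0$, and $\mu_0^a$ supported on circles) together with the prescribed boundary data $f^a(r) = \alpha/r$ on $(R_2,R_2+a)$, and to reduce $F^a$ to an elementary finite-dimensional minimization. I will then check that the optimal configuration has no sign change of $f^a$ inside $A$, so that $\mu_0^a \equiv 0$ there.

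First I would parametrize: on $(R_1,R_2)$, write $f^a(r) = \epsilon(r)\lambda/r$ with $\epsilon(r)\in\{+1,-1\}$ a.e. Since $\|\mu_0^a\|(A_a)<\infty$ and since every discontinuity of $\epsilon$ at a circle $r=r_0$ produces a measure of mass $4\pi\lambda$ on $\partial B_{r_0}$ (coming from a jump of $rh^{\mu_0^a\prime}$ of size $2\lambda$), there can only be finitely many sign changes, say $R_1<r_1<\dots<r_k<R_2$. In addition, a jump of $rh^{\mu_0^a\prime}$ may occur at $r=R_2$, from $\epsilon(R_2^-)\lambda$ to $\alpha$, since $h^{\mu_0^a}$ is only required to be continuous (not $C^1$) across $\partial B_{R_2}$.

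Second, I compute $F^a$ in closed form. Using that, for $h$ radial, $\Delta h = \frac{1}{r}(rh')'$, a jump of $rh^{\mu_0^a\prime}$ of size $J$ at $r=r_0$ contributes a $2$D measure of mass $2\pi|J|$ supported on $\partial B_{r_0}$. Hence in polar coordinates
\begin{equation*}
\tfrac12\!\int_A\!|j_0^a|^2 = \pi\lambda^2\ln(R_2/R_1),\qquad \tfrac12\|\mu_0^a\|(A_a) = 2\pi\lambda k + \pi|\alpha-\epsilon(R_2^-)\lambda|,
\end{equation*}
so that
\begin{equation*}
F^a(j_0^a) \;=\; 2\pi\lambda k \;+\; \pi|\alpha-\epsilon(R_2^-)\lambda| \;+\; \pi\lambda^2\ln(R_2/R_1).
\end{equation*}

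Third, I minimize over $(k,\epsilon(R_2^-))$ for fixed $\lambda>0$: $k=0$ strictly beats any $k\geq 1$, and $\epsilon(R_2^-)=+1$ strictly beats $\epsilon(R_2^-)=-1$ because $|\alpha-\lambda|<\alpha+\lambda$. These two pieces of information together force $\epsilon\equiv+1$ on $(R_1,R_2)$, which is exactly $j_0^a=(\lambda/r)\vec{e_\theta}$ on $A$ and hence $\mu_0^a=0$ on $A$, with any remaining vorticity concentrated on $\partial B_{R_2}$. The case $\lambda=0$ is immediate. The main obstacle is the bookkeeping in Step 1--2: justifying rigorously, from $\|\mu_0^a\|<\infty$ and the radial structure, that $\epsilon$ has only finitely many sign changes and that the total variation of the resulting $2$D measure is exactly the sum of the circle contributions above. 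Once this accounting is in place the minimization is elementary, and as a by-product, optimizing the remaining $1$D function $\lambda\mapsto\pi|\alpha-\lambda|+\pi\lambda^2\ln(R_2/R_1)$ over $\lambda\geq 0$ yields $\lambda=\alpha$ when $\alpha\leq\alpha_c$ and $\lambda=\alpha_c=1/(2\ln(R_2/R_1))$ when $\alpha>\alpha_c$, recovering the critical threshold of Theorems \ref{th:uniqueness_d_small} and \ref{th:main1}.
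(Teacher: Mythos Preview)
Your proposal is correct and follows essentially the same approach as the paper: starting from Proposition \ref{prop:explicit_j} (so that \(f^a(r)=\epsilon(r)\lambda/r\) with \(\epsilon\in\{\pm1\}\)), you compare the candidate \(j_0^a\) to the configuration with constant positive sign and observe that the Dirichlet part is unchanged while the measure part strictly decreases. The paper's own proof is a one-line version of this comparison (``having a constant sign for \(\chi\) decreases the measure part and does not change the Dirichlet part''); your write-up supplies the explicit bookkeeping (the formula \(F^a=2\pi\lambda k+\pi|\alpha-\epsilon(R_2^-)\lambda|+\pi\lambda^2\ln(R_2/R_1)\) and the BV argument that \(\epsilon\) has finitely many jumps), and also makes explicit why the sign \(+1\) is selected over \(-1\), which the paper leaves implicit. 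Your final optimization over \(\lambda\) anticipates what the paper carries out later in the proof of Theorems \ref{th:uniqueness_d_small} and \ref{th:main1}.
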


\begin{proof}
We use Proposition \ref{prop:explicit_j} and we observe that among all vector fields of the form \(j=\chi(|x|)\frac{\lambda}{r}\vec{e_\theta}\) with \(\chi:A \rightarrow \{\pm 1\}\) the one which has the least \(F\)-energy is \(j_0^a=\frac{\lambda}{r}\vec{e_\theta}\) in \(A\). Indeed, having a constant sign for \(\chi\) decreases the measure part and does not change the Dirichlet part.
\end{proof}

\subsection{Proof of the main results}
We are now ready to conclude the proof of the main result Theorem \ref{th:main1} and of Item 2) of Theorem \ref{th:uniqueness_d_small}.

\begin{proof}(proof of Item 2) Theorem \ref{th:uniqueness_d_small} and of Theorem  \ref{th:main1})
From the lower bound and the minimization of the energy \(F^a\) in Section \ref{sec:min_F} we have that
\begin{equation*}
\liminf_{\e \to 0} \frac{E_\e(u_\e)}{|\ln \e|^2} \geq  F^a(\tilde{j}) \geq F^a(j_0^a),
\end{equation*}
where \(j_0^a=\frac{\lambda}{r}\vec{e_\theta}\) is the minimizer of \(F^a\) and \(\tilde{j}\) is the weak limit in \(L^2(A_a)\) of \(\tilde{j}_\e/|\ln \e|\). Let us compute \(\curl j_0^a\) in \(A_a\). Let \(\varphi \in \C^\infty_c(B_{R_2+a}\setminus \overline{B}_{R_1})\),
\begin{align}
\langle \curl j_0^a, \varphi \rangle &= -\int_{B_{R_2+a}\setminus \overline{B}_{R_1}} j_0^a\cdot \nabla^\perp \varphi \nonumber \\
&=-\int_{B_{R_2+a}\setminus B_{R_2}}\frac{\alpha}{r}\p_r \varphi -\int_{B_{R_2}\setminus B_{R_1}} \frac{\lambda}{r}\p_r\varphi 
=\int_{\p B_{R_2}} \varphi \frac{(\alpha-\lambda)}{R_2} \, d \mathcal{H}^1. \label{eq:calcul_curl}
\end{align}
Hence \(\curl j_0^a=\frac{(\alpha-\lambda)}{R_2} \mathcal{H}^1_{\lfloor \p B_{R_2}}\). Thus we deduce that
\[F^a(j_0^a)=\pi |\alpha-\lambda|+\pi \lambda^2 \ln \frac{R_2}{R_1}.\]
We observe that this quantity is independent of \(a>0\). We can now minimize this as a function of \(\lambda\) and actually this is similar to the function $f$ defined by \eqref{eqfN} in the proof of Proposition \ref{cor:upper_bound_final}. We find that if \(\alpha \leq \frac{1}{2\ln \frac{R_2}{R_1}}\) then the minimum is attained for \(\lambda=\alpha\) and if \(\alpha > \frac{1}{2\ln \frac{R_2}{R_1}}\) it is attained for \( \lambda=\frac{1}{2\ln \frac{R_2}{R_1}}\). We thus have
\begin{align*}
\liminf_{\e \to 0} \frac{E_\e(u_\e)}{|\ln \e|^2}& \geq  F^a(\tilde{j}) \geq F^a(j_0^a) \geq \pi |\alpha-\lambda|+\pi \lambda^2 \ln \frac{R_2}{R_1},\\
& \geq \begin{cases}
\pi \alpha^2 \ln \frac{R_2}{R_1}  & \text{ if } \alpha \leq \frac{1}{2 \ln (R_2/R_1)} \\
 \pi \left(\alpha-\frac{1}{4\ln \frac{R_2}{R_1}}\right) & \text{ if } \alpha > \frac{1}{2 \ln (R_2/R_1)}.
\end{cases}  
\end{align*}
But thanks to the upper bounds in Lemma \ref{lem:1st_upper_bound} and Proposition \ref{cor:upper_bound_final} we also have that 
\[  \liminf_{\e \to 0} \frac{E_\e(u_\e)}{|\ln \e|^2}\leq \begin{cases}
\pi \alpha^2 \ln \frac{R_2}{R_1}  & \text{ if } \alpha \leq \frac{1}{2 \ln (R_2/R_1)} \\
 \pi \left(\alpha-\frac{1}{4\ln \frac{R_2}{R_1}}\right)  & \text{ if } \alpha > \frac{1}{2 \ln (R_2/R_1)}.
\end{cases} .\]
This entails, by the strict convexity of \(j \in \J^a \mapsto F^a(j)\) that \( \tilde{j}=j_0^a\) and we know explicitly \(j_0^a\) according to the value of \(\alpha\). In particular if \(\alpha \geq 1/(2\ln (R_2/R_1)\) then \(j_0^a=\frac{1}{2\ln (R_2/R_1) r}\vec{e_\theta}\) in \(A\) and \(\tilde{\mu}_\e/|\ln \e|\rightharpoonup \curl j_0^a =(\alpha-\frac{1}{2\ln (R_2/R_1)})\mathcal{H}^1_{\lfloor \p B_{R_2}}\) in \(\mathcal{M}(A_a)\). Hence we obtain that \(\tilde{\mu}_\e/|\ln \e|\rightharpoonup \mathcal{H}^1_{\lfloor \p B_{R_2}}\) in \(\M(\overline{B}_{R_2}\setminus \overline{B}_{R_1})\). This completes the proof of Item 2) Theorem \ref{th:uniqueness_d_small} and of Theorem \ref{th:main1}.
\end{proof}

\section{The case \(d_\e < \alpha_c|\ln \e|\): proof of Theorem \ref{th:uniqueness_d_small}}\label{sec:d_small}
This section is devoted to the proof of Theorem \ref{th:uniqueness_d_small}. We follow closely the proofs of Theorem 1.2 and Theorem 1.3 in \cite{Aftalion_Jerrard_RoyoLetelier_2011}. We first need some information on the radial solution of the GL equation in our annulus.

\begin{lemma}\label{lem:estimates_eta}
Let \(\eta_\e\) be a minimizer of the energy \eqref{eq:energy_eta} among real-valued functions in \(H^1((R_1,R_2))\) satisfying \(\eta(R_1)=\eta(R_2)=1\). Then \(\eta_\e\) is in \(\C^\infty([R_1,R_2])\) and satisfies
\begin{equation}\label{eq:estimate_eta}
\|\eta_\e-1\|_{L^\infty}\leq C(d_\e\e)^2, \quad \quad \| \nabla \eta_\e\|_{L^\infty}\leq C,
\end{equation}
for some constant \(C>0\). Furthermore there is uniqueness of the minimizer.
\end{lemma}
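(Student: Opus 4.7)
The plan is to derive the Euler--Lagrange ODE satisfied by $\eta_\e$, compare it to the explicit ansatz $\eta_\ast(r) := \sqrt{1 - d_\e^2\e^2/r^2}$ that already appeared in the proof of Lemma~\ref{lem:1st_upper_bound}, and then read off the gradient bound from a rescaled elliptic estimate. Since $\eta_\e$ is a radial real-valued minimizer, it solves
\begin{equation}\label{eq:ODE_eta_plan}
-\frac{1}{r}(r\eta_\e')' + \frac{d_\e^2}{r^2}\eta_\e = \frac{\eta_\e(1-\eta_\e^2)}{\e^2} \text{ on } (R_1,R_2), \quad \eta_\e(R_1)=\eta_\e(R_2)=1.
\end{equation}
Smoothness on the closed interval $[R_1,R_2]$ is a standard ODE bootstrap since the coefficients are smooth on $r \geq R_1 > 0$ and $\eta_\e \in H^1$. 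A truncation argument (replacing $\eta_\e$ by $\min(\max(\eta_\e,0),1)$ strictly decreases $I_\e$ if the truncation is nontrivial) followed by the strong maximum principle gives $0 < \eta_\e \le 1$.

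For the $L^\infty$ estimate I would run a comparison argument with $\eta_\ast$. The key point, already exploited in Lemma~\ref{lem:1st_upper_bound}, is that $\eta_\ast$ makes the zeroth-order part $(d_\e^2/r^2)\eta_\ast - \eta_\ast(1-\eta_\ast^2)/\e^2$ of \eqref{eq:ODE_eta_plan} vanish identically, so plugging $\eta_\ast$ into the equation leaves only the residual $R_\ast(r) := -\tfrac{1}{r}(r\eta_\ast')'$, which is of order $(d_\e\e)^2$ uniformly on $[R_1,R_2]$. Setting $w := \eta_\e - \eta_\ast$, which equals $1-\eta_\ast(R_j) = O((d_\e\e)^2)$ on the boundary, a direct computation using $\eta_\e^3 - \eta_\ast^3 = w(\eta_\e^2 + \eta_\e\eta_\ast + \eta_\ast^2)$ gives
\begin{equation*}
-\frac{1}{r}(rw')' + \Bigl[\frac{d_\e^2}{r^2} + \frac{\eta_\e^2 + \eta_\e\eta_\ast + \eta_\ast^2 - 1}{\e^2}\Bigr] w = R_\ast(r).
\end{equation*}
Since $\eta_\e$ and $\eta_\ast$ are a priori both close to $1$ (by an initial rougher bound or by bootstrap), the bracket is bounded below by $c/\e^2$ for some $c > 0$ when $\e$ is small. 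The maximum principle then yields $\|w\|_{L^\infty} \le C\e^2 \|R_\ast\|_{L^\infty} + C(d_\e\e)^2 \le C(d_\e\e)^2$, which combined with $\|1-\eta_\ast\|_{L^\infty} \le C(d_\e\e)^2$ gives the claimed bound on $\|\eta_\e - 1\|_{L^\infty}$.

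The gradient bound is then obtained by rescaling. Away from the endpoints, standard interior elliptic estimates applied to \eqref{eq:ODE_eta_plan} on intervals of definite size show that $|\eta_\e'|$ is small since $\eta_\e$ is close to the constant $1$. Near $r = R_j$, the blow-up $v(s) := 1 - \eta_\e(R_j + \e s)$ solves a uniformly elliptic equation on a growing domain with right-hand side bounded by $C(1 + d_\e^2\e^2) \leq C$ and boundary datum $v(0)=0$; classical $C^{1,\alpha}$ Schauder estimates yield $|v'(0)| \le C$, i.e.\ $|\eta_\e'(R_j)| \le C$. Uniqueness finally follows from the Brezis--Oswald argument: the nonlinearity $f(s) := s(1-s^2)$ satisfies that $s \mapsto f(s)/s = 1 - s^2$ is strictly decreasing on $(0,1)$, which forces two positive solutions of \eqref{eq:ODE_eta_plan} with identical boundary data to coincide.

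The main obstacle I expect is the sign of the zeroth-order coefficient in the equation for $w$: strictly speaking one first needs a preliminary (possibly worse) $L^\infty$ estimate on $\eta_\e - 1$ to guarantee that the bracket above is positive and of order $1/\e^2$, before bootstrapping to the sharp rate $(d_\e\e)^2$. This can be handled either by an energy-comparison preliminary step, using that $I_\e(\eta_\e) \le I_\e(\tilde\eta)$ for a suitable test function equal to $\eta_\ast$ away from the boundary layer and interpolating to $1$ on an $\e$-collar, or directly by a standard $\eta \mapsto \min(\eta, 1)$ truncation combined with a maximum-principle comparison against $\eta \equiv 1$ viewed as a super-solution of \eqref{eq:ODE_eta_plan}.
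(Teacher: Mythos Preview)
The paper gives no proof of this lemma beyond the one-line reference to \cite{BBH_93,Aftalion_2006}, so there is no argument to compare against and the question is simply whether your outline is sound.

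Your treatment of smoothness, the bound $0<\eta_\e\le 1$, the $L^\infty$ estimate via comparison with $\eta_\ast$, and uniqueness via a Brezis--Oswald/Picone argument are all correct. (For the $L^\infty$ bound you can also argue more directly and avoid the bootstrap issue you flag at the end: at an interior minimum $r_0$ of $\eta_\e$, the ODE together with $\eta_\e'(r_0)=0$ and $\eta_\e''(r_0)\ge 0$ forces $d_\e^2/r_0^2 \ge (1-\eta_\e(r_0)^2)/\e^2$, i.e.\ $\eta_\e(r_0)\ge 1-C(d_\e\e)^2$ in one step.)

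There is, however, a genuine slip in your gradient bound near the endpoints. With $v(s)=1-\eta_\e(R_j+\e s)$ one has $v'(s)=-\e\,\eta_\e'(R_j+\e s)$, so the Schauder conclusion $|v'(0)|\le C$ only delivers $|\eta_\e'(R_j)|\le C/\e$, not $\le C$. The repair is to feed the $L^\infty$ bound you have already established back into the rescaled problem: since $\|v\|_{L^\infty}\le C(d_\e\e)^2$ and the rescaled right-hand side $(1-v)(2v-v^2)-\tfrac{d_\e^2\e^2}{(R_j+\e s)^2}(1-v)$ is then $O((d_\e\e)^2)$ rather than merely $O(1)$, Schauder gives $|v'(0)|\le C(d_\e\e)^2$, hence $|\eta_\e'(R_j)|\le C d_\e^2\e$, which is $\le C$ in the regime $d_\e\le \alpha_c|\ln\e|$ of Section~\ref{sec:d_small}. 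The same refinement is needed for your interior argument: on intervals of ``definite size'' the forcing in the equation for $\eta_\e-1$ is of order $d_\e^2$, not $O(1)$, so either work on intervals of length $\e$ or again exploit the smallness of $\|\eta_\e-1\|_{L^\infty}$ in the estimate.
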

This lemma can be proved by using the techniques in \cite{BBH_93,Aftalion_2006}.

In Ginzburg-Landau theory, to prove uniqueness of minimizers in some special situations  it is customary to use a splitting of the energy which takes advantage of the  fact that we work with complex-valued function. This was originally devised in \cite{Lassoued_Mironescu_1999}. 

\begin{lemma}\label{lem:decomposition_energy} 
Let \(u\) be in \(\I_\e\), thanks to Lemma \ref{lem:estimates_eta}, for \(\e>0\) small enough, \(\eta_\e\) does not vanish in \(A\). We can thus write \(u=\eta_\e e^{id_\e\theta} v\) for some \(v\in H^1(A,\mathbb{C})\) with \(|v|=1\) on \(\p B_{R_1}\) and \(v=1\) on \(\p B_{R_2}\). Then
\begin{equation}\label{eq:splittinh}
E_\e(u)=E_\e(\eta_\e e^{id_\e \theta})+G_\e(v),
\end{equation} 
where 
\begin{equation*}
G_\e(v)=\frac12 \int_A \eta_\e^2 |\nabla v|^2+\frac{1}{4\e^2}\int_A \eta_\e^4 (1-|v|^2)^2 +d_\e\int_A \frac{\eta_\e^2}{r}\vec{e_\theta}\cdot (v\wedge \nabla v).
\end{equation*}
\end{lemma}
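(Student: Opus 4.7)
The strategy is the classical Lassoued--Mironescu expansion: write everything in terms of $w:=\eta_\e e^{id_\e\theta}$ and $v$, isolate the pure $w$-part and the pure $v$-part, then show that the remaining ``cross'' terms vanish thanks to the Euler--Lagrange equation satisfied by $\eta_\e$ combined with the boundary conditions $|v|=1$ on $\partial A$.

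First I would check that the factorisation is well defined. By Lemma \ref{lem:estimates_eta}, $\|\eta_\e-1\|_{L^\infty}\le C(d_\e\e)^2$, so for $\e$ small enough $\eta_\e$ is bounded away from $0$ on $[R_1,R_2]$ and $v:=u/(\eta_\e e^{id_\e\theta})$ lies in $H^1(A,\mathbb{C})$. The boundary conditions on $v$ follow from those on $u$ and from $\eta_\e(R_1)=\eta_\e(R_2)=1$: on $\partial B_{R_2}$ the condition $u=e^{id_\e\theta}$ forces $v=1$, and on $\partial B_{R_1}$ the condition $|u|=1$ forces $|v|=1$.

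Next, writing $\nabla u=v\nabla w+w\nabla v$ gives
\begin{equation*}
|\nabla u|^2=|v|^2|\nabla w|^2+\eta_\e^2|\nabla v|^2+2\,\mathrm{Re}\bigl(v\bar w\,\nabla w\cdot\nabla\bar v\bigr).
\end{equation*}
I would then split the cross term using $\bar w\nabla w=\eta_\e\nabla\eta_\e+i\eta_\e^2 d_\e\nabla\theta$ and $v\nabla\bar v=\tfrac12\nabla|v|^2-i(v\wedge\nabla v)$, together with $\nabla\theta=\vec{e_\theta}/r$, to get
\begin{equation*}
2\,\mathrm{Re}\bigl(v\bar w\,\nabla w\cdot\nabla\bar v\bigr)=\eta_\e\nabla\eta_\e\cdot\nabla|v|^2+2d_\e\,\frac{\eta_\e^2}{r}\vec{e_\theta}\cdot(v\wedge\nabla v).
\end{equation*}
For the potential part I would just expand
\begin{equation*}
(1-|u|^2)^2=(1-\eta_\e^2)^2+2(1-\eta_\e^2)\eta_\e^2(1-|v|^2)+\eta_\e^4(1-|v|^2)^2.
\end{equation*}

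Collecting all contributions, $E_\e(u)-E_\e(w)-G_\e(v)$ reduces to
\begin{equation*}
\mathcal{R}:=\frac12\int_A(|v|^2-1)|\nabla w|^2+\frac12\int_A\eta_\e\nabla\eta_\e\cdot\nabla|v|^2+\frac{1}{2\e^2}\int_A(1-\eta_\e^2)\eta_\e^2(1-|v|^2).
\end{equation*}
Setting $\rho:=|v|^2-1$ and using $|\nabla w|^2=|\nabla\eta_\e|^2+\eta_\e^2 d_\e^2/r^2$, the key step is an integration by parts in the middle term, $\int_A\eta_\e\nabla\eta_\e\cdot\nabla\rho=-\int_A(|\nabla\eta_\e|^2+\eta_\e\Delta\eta_\e)\rho+\int_{\partial A}\eta_\e\partial_\nu\eta_\e\,\rho$, combined with the Euler--Lagrange equation
\begin{equation*}
-\Delta\eta_\e=\frac{\eta_\e}{\e^2}(1-\eta_\e^2)-\frac{d_\e^2}{r^2}\eta_\e
\end{equation*}
for the radial minimizer of \eqref{eq:energy_eta}. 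Substituting $\eta_\e\Delta\eta_\e$ by this expression shows that the three bulk terms in $\mathcal{R}$ cancel pairwise: the $d_\e^2/r^2$ pieces cancel, the $|\nabla\eta_\e|^2\rho$ pieces cancel, and the $\e^{-2}$ pieces cancel as well. Finally, the remaining boundary contribution $\tfrac12\int_{\partial A}\eta_\e\partial_\nu\eta_\e\,\rho$ vanishes because $|v|=1$ on $\partial A$, so $\rho\equiv 0$ on $\partial A$. Hence $\mathcal{R}=0$, which is exactly the claimed decomposition \eqref{eq:splittinh}.

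The only place where one must be a little careful is the bookkeeping in step two (keeping track of the $i$'s in $\bar w\nabla w$ and $v\nabla\bar v$ so that the real part splits into the $|v|^2$-gradient piece plus the momentum piece $(v\wedge\nabla v)$) and, conceptually, recognising that the cancellation of bulk cross terms is precisely the Euler--Lagrange equation for $\eta_\e$; this is what makes the splitting work for the distinguished radial density and not for an arbitrary modulus.
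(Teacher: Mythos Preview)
Your proof is correct and follows essentially the same Lassoued--Mironescu strategy as the paper: expand $|\nabla(wv)|^2$ and the potential, isolate the cross terms, and cancel them via the Euler--Lagrange equation for $\eta_\e$ together with $|v|=1$ on $\partial A$. The paper organizes the cancellation by multiplying the equation for $\eta_\e$ by $\eta_\e(|v|^2-1)$ and integrating by parts, then simplifies the potential via the identity $(1-|wv|^2)^2-(1-|w|^2)^2=2|w|^2(1-|v|^2)-|w|^4(1-|v|^4)$; your route---collecting the remainder $\mathcal{R}$ explicitly and integrating by parts in $\int_A\eta_\e\nabla\eta_\e\cdot\nabla\rho$---is the same computation in a slightly tidier order.
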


\begin{proof}
We let \(w_\e:=\eta_\e e^{id_\e \theta}\). We drop the subscript \(\e\) for simplicity, a computation shows that
\begin{align*}
E_\e(w v)&=\frac12 \int_A \left(|v|^2 |\nabla w|^2 + |w|^2 |\nabla v|^2 +2v\nabla w\cdot w \nabla v\right) +\frac{1}{4\e^2}\int_A (1-|w v|^2)^2 \\
&= \frac12 \int_A \left( |v|^2 \left[|\nabla \eta|^2+\eta^2 \frac{d_\e^2}{r^2}\right]+\eta^2|\nabla v|^2\right)\\
& \quad + \int_A \eta \eta' v\cdot \p_rv + \int_A \frac{d_\e \eta^2}{r^2} v\wedge \p_\theta v +\frac{1}{4\e^2}\int_A (1-|wv|^2)^2.
\end{align*}
Thus
\begin{align*}
E_\e(wv)-E_\e(w) &=\frac12 \int_A (|v|^2-1) (|\nabla \eta|^2+\eta^2 \frac{d_\e^2}{r^2})+\frac12 \int_A \eta^2|\nabla v|^2 \\
& \quad +\int_A \eta \eta' v\cdot \p_rv+\int_A \frac{d_\e \eta^2}{r^2} v\wedge \p_\theta v \\
& \quad \quad +\frac{1}{4\e^2}\int_A [(1-|wv|^2)^2)-(1-|w|^2)^2].
\end{align*}
We can check that \((1-|wv|^2)^2)-(1-|w|^2)^2=2|w|^2(1-|v|^2)-|w|^4(1-|v|^4)\).
Now we use the equation satisfied by \(\eta_\e\). It reads
\begin{equation}\label{eq:EL_eta}
\left\{
\begin{array}{rcll}
-\Delta \eta +\frac{d_\e^2  }{r^2}\eta &=& \frac{1}{\e^2}\eta(1-\eta^2) &\text{ in } A, \\
\eta&=&1 & \text{ on } \p A.
\end{array}
\right.
\end{equation}
We multiply \eqref{eq:EL_eta} by \(\eta (1-|v|)\), which  vanishes on \(\p A\), and integrate by parts to find that
\begin{multline}\label{*}
\int_A |\nabla \eta|^2(|v|^2-1)+2\int_A \eta \nabla \eta \cdot v \nabla v+\int_A \eta^2 (|v|^2-1)\frac{d_\e^2}{r^2} \\
= \frac{1}{\e^2}\int_A \eta^2(1-\eta^2)(|v|^2-1).
\end{multline}
By using \eqref{*} we obtain that 
\begin{align*}
E_\e(wv)-E_\e(w)&= \frac12 \int_A \eta^2|\nabla v|^2+\frac{1}{2\e^2}\eta^2(1-\eta^2)(|v|^2-1)+\frac{1}{2\e^2}\int_A \eta^2(1-|v|^2) \\
& \quad -\frac{1}{4\e^2}\int_A \eta^4 (1-|v|^4)+\int_A \frac{d_\e \eta^2}{r}\vec{e_\theta}\cdot (v\wedge \nabla v).
\end{align*}
To conclude we observe that 
\begin{align*}
\frac{1}{2\e^2}\eta^2(1-\eta^2)(|v|^2-1)+\frac{1}{2\e^2}\eta^2(1-|v|^2)-\frac{1}{4\e^2} \eta^4 (1-|v|^4) \\
= -\frac{2\eta^4}{4\e^2}(|v|^2-1)-\frac{1}{4\e^2}\eta^4 (1-|v|^4) \\
=\frac{\eta^4}{4\e^2}(1-2|v|^2+|v|^4)=\frac{\eta^4}{4\e^2}(1-|v|^2)^2.
\end{align*}
\end{proof}

Now our goal is to show that if \(u_\e=w_\e v\) is  a minimizer of \(E_\e\) in \(\I_\e\) then \(E_\e(v)\) is small (like \(\e^\beta\) for some \(\beta\)) in \(A\). With this information and with the help of a gradient bound on \(v\) of the type \(|\nabla v|< C/\e\) then we can conclude that \(v\) converges uniformly to \(1\). However we were able to obtain the desired gradient bound only far from the boundary \(\p B_{R_1}\). We will need to work in the extended annulus \(B_{R_2+a}\setminus B_{R_1}\) as at the beginning of Section \ref{sec:d_order_lneps}. Note that, if we extend \(u_\e\) and \(w_\e\) by \(e^{id_\e \theta}\) in \(B_{R_2+a}\setminus \overline{B}_{R_2}\) then \(v\) is automatically extended by \(1\) and the extensions satisfy the decomposition 
\begin{equation*}
E_\e(u,\tilde{A})=E_\e(w_\e,\tilde{A})+G_\e(v,\tilde{A}).
\end{equation*}
We define 
\begin{equation}\label{def:xi}
\xi_\e:=\int_{R_1}^r \frac{\eta_\e^2(s)}{s} ds \quad \text{ for } r\in (R_1,R_2)
\end{equation}
so that \(\nabla^\perp \xi_\e=\eta_\e^2(r) \vec{e_\theta}/r\) in \(A\). We also extend \(\xi_\e\) to \((R_2,R_2+a)\) by setting \(\xi_\e(r)=\xi_\e(R_2)\) in  \((R_2,R_2+a)\). Thanks to Lemma \ref{lem:estimates_eta} we can prove that 
\begin{equation}\label{eq:prop_xi}
\|\xi_\e\|_{L^\infty(\A)}\leq \ln (R_2/R_1)+O((d_\e\e)^2), \quad \|\nabla \xi_\e\|_{L^\infty(\tilde{A})}\leq C. 
\end{equation}
We can rewrite the term whose sign is not controlled in \(G_\e\) thanks to this function \(\xi_\e\) and an integration by parts:

\begin{align*}
\int_{A} \frac{\eta_\e^2}{r}\vec{e_\theta}\cdot v\wedge \nabla v &=\int_{A} \nabla^\perp \xi_\e \cdot v\wedge \nabla v \\
&= -\int_{A}\xi_\e \curl (v\wedge \nabla v)+\int_{\p \A} \xi_\e v\wedge \p_\tau v \\
&= -\int_{A}\xi_\e \mu_v
\end{align*}
where we defined \(\mu_v:= \curl (v\wedge \nabla v)=2\p_xv\wedge\p_yv\) and where we used that \(\xi_\e=0\) on \(\p B_{R_1}\) and \(v\wedge \p_\tau v=0\) on \(\p B_{R_2}\) since \(v=1\) there. Since \(v=1\) in \(B_{R_2+a}\setminus \overline{B}_{R_2}\) we can also say that
\begin{equation*}
\int_{\A} \frac{\eta_\e^2}{r}\vec{e_\theta}\cdot v\wedge \nabla v =-\int_{\A}\xi_\e \mu_v.
\end{equation*}
We can thus rewrite 
\begin{align*}
G_\e(v,\A)&=\frac12 \int_{\A}\eta^2|\nabla v|^2-d_\e \int_{\A}\xi_\e \mu_v+\frac{1}{4\e^2}\eta^4 (1-|v|^2)^2 \\
&=\frac12 \int_{\A}\eta^2\left[|\nabla v|^2-\frac{2\xi_\e d_\e}{\eta_\e}\mu_v\right]+\frac{1}{4\e^2}\int_{\A}\eta^4(1-|v|^2)^2.
\end{align*}
Now we take a non-vanishing smooth function \(a_\e=o_\e(1)\) such that \(d_\e^2/a_\e \ll \e\) and \(\chi_\e\in \C^\infty_c(\A)\) such that 
\begin{multline*}
\chi_\e \equiv 1 \text{ in } B_{R_2+a-\frac{2a_\e}{d_\e}}\setminus \overline{B}_{R_1+\frac{2a_\e}{d_\e}}, \quad \chi_\e\equiv 0 \text{ in } B_{R_1+\frac{2a_\e}{d_\e}}\setminus \overline{B}_{R_1}, \\
 \chi_\e\equiv 0 \text{ in }  B_{R_2+a}\setminus \overline{B}_{R_2+a-\frac{2a_\e}{d_\e}}, \quad |\nabla \chi_\e|\leq C\frac{d_\e}{a_\e}.
\end{multline*}
We decompose
\begin{equation*}
G_\e(v,\A)= A_1-A_2+B,
\end{equation*}
where
\begin{align*}
A_1&=\int_{\A} \chi_\e \left[\frac{\eta_\e^2}{2}|\nabla v|^2+\frac{\eta_\e^4}{4\e^2}(|v|^2-1)^2 \right], \\
A_2&= d_\e \int_{\A} \chi_\e \xi_\e \mu_v, \\
B&=\int_{\A} (1-\chi_\e ) \left[ \frac{\eta_\e^2}{2}(|\nabla v|^2-2 d_\e f_\e \mu_v) +\frac{\eta_\e^4}{4\e^2}(|v|^2-1)^2 \right].
\end{align*}
We used the following notation \(f_\e:=\xi_\e/\eta_\e^2\). Because of our choice of \(\chi_\e\) and by using that \(|\nabla \xi_\e|\leq C\) we have that \(|\xi_\e|\leq C a_\e/d_\e\) in \(\supp(1-\chi_\e)\cap A\). Hence, for \(\e\) sufficiently small, we obtain that \(d_\e f_\e\leq \frac{1}{4}\) in \(\supp(1-\chi_\e)\cap A\). Since \(v=1\) in \(\A\setminus A\), recalling the inequality \(|\nabla v|^2 \geq 2 |\p_xv\wedge \p_yv|=|\mu_v|\), proved by using the Cauchy-Schwarz inequality, we find that 
\begin{equation}\label{eq:(4.36)}
B \geq \int_{\A}(1-\chi_\e)\left[ \frac{\eta_\e^2}{4}|\nabla v|^2+\frac{\eta_\e^4}{4\e^2}(|v|^2-1)^2\right]\geq 0.
\end{equation}
Now we set \(\tilde{\e}:=\frac{\e}{\inf_{\A}\eta_\e}\). Since from Lemma \ref{lem:estimates_eta} we know that \(\eta_\e=1+O(d_\e^2 \e^2)\) we have that \(\tilde{\e}=\e+O(d_\e^2\e^3)\). We suppose that \(u=w_\e v\) minimizes \(E_\e\) in \(\I_\e\). Then we find that \(G_\e(v)\leq 0\) and this entails
\begin{equation}\label{eq:inegA1A2B}
A_1+B\leq A_2.
\end{equation} Since \(\frac{1}{\tilde{\e}^2}\leq \frac{\eta_\e^2}{\e^2}\) in \(\A\) we observe that 
\begin{equation}\label{**}
\int_{\A} \frac12 |\nabla v|^2+\frac{1}{4\tilde{\e}^2}(|v|^2-1)^2\leq (\inf_{\A}\eta_\e)^{-2}(A_1+2B)\leq CA_2.
\end{equation}

The key of the proof of Theorem \ref{th:uniqueness_d_small} is a weighted Jacobian estimate due to Jerrard that we recall now:

\begin{lemma}(\cite[Lemma 8]{Jerrard_2007} and \cite[Lemma 4.1]{Aftalion_Jerrard_RoyoLetelier_2011})\label{lem:weighted_jac_est}

There exists a universal constant \(C>0\) such that for any \(\kappa \in (1,2)\), open set \(U\subset \R^2\) and \(u\in H^1(U,\R^2)\), and \(\e'\in (0,1)\),
\begin{multline*}
\left|\int_U \phi \p_xu \wedge \p_y u\right|\leq \kappa \int_U |\phi| \frac{e_{\e'}(u)}{|\ln \e'|}\\
+C{\e'}^{(\kappa-1)/50}(1+\|\phi\|_{W^{1,\infty}})\left(\|\phi\|_{L^\infty}+1+\int_{\supp \phi}(|\phi|+1)e_{\e'}(u) \right)
\end{multline*}
for all \(\phi \in \C^{0,1}_c(U)\). Here \(e_{\e'}(u)=\frac12 |\nabla u|^2+\frac{1}{4{\e'}^2}(1-|u|^2)^2\).
\end{lemma}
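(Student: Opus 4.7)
The plan is to combine the vortex-ball construction of Sandier--Jerrard (Theorem \ref{th:vortex_ball_construction}) with the Jacobian estimate of Jerrard--Soner (Theorem \ref{th:jacobian_estimate}) and then to convert the resulting discrete sum $\pi\sum_i n_i\phi(a_i)$ into a weighted integral of the energy density. The factor $\kappa>1$ in front of the leading term appears because the per-ball energy lower bound $\pi|n_i|\ln(r_i/\e')$ has a denominator strictly less than $|\ln\e'|$, and turning this into a clean prefactor $\kappa/|\ln\e'|$ costs a multiplicative constant arbitrarily close to one.

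Assuming the energy hypothesis of Theorem \ref{th:vortex_ball_construction} holds (otherwise a direct estimate using $|\p_x u\wedge\p_y u|\leq e_{\e'}(u)$ together with the ${\e'}^{(\kappa-1)/50}$-term in the statement already suffices), I would apply the vortex-ball construction to $u$ on a small thickening of $\supp\phi$ at scale $r={\e'}^\sigma$ with $\sigma\in(0,1)$ to be optimized. This yields disjoint balls $\{B(a_i,r_i)\}_{i\in I}$ with $\sum_i r_i=r$, integer degrees $n_i=\deg(u/|u|,\p B_i)$, and per-ball lower bounds $\pi|n_i|((1-\sigma)|\ln\e'|-O(\ln|\ln\e'|))\leq E_{\e'}(u,B_i)$, available from the growing-balls construction at intermediate scales. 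Theorem \ref{th:jacobian_estimate} then provides
\begin{equation*}
\Bigl|\int_U \phi\,\p_x u\wedge\p_y u - \pi\sum_i n_i\phi(a_i)\Bigr| \leq C r(1+E_{\e'}(u,\supp\phi))\|\phi\|_{W^{1,\infty}},
\end{equation*}
which already has the shape of the error term in the statement. Using the elementary bound $|\phi(a_i)|\leq\|\phi\|_{L^\infty(B_i)}\leq\inf_{B_i}|\phi|+2r_i\|\nabla\phi\|_{L^\infty}$, multiplying by $\pi|n_i|$ and summing yields
\begin{equation*}
\pi\Bigl|\sum_i n_i\phi(a_i)\Bigr|\leq \frac{1+o(1)}{(1-\sigma)|\ln\e'|}\Bigl(\int_{\supp\phi}|\phi|\,e_{\e'}(u) + 2r\|\nabla\phi\|_{L^\infty}E_{\e'}(u,\supp\phi)\Bigr).
\end{equation*}

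Finally, I would choose $\sigma$ so that $1/(1-\sigma)\leq\kappa$, which makes the prefactor at most $\kappa/|\ln\e'|$; the remaining errors are all of the form ${\e'}^\sigma$ times quantities that already appear on the right-hand side of the statement, and after absorbing corrections of order $\ln|\ln\e'|/|\ln\e'|$ (stemming from $|\ln D|$ with $D=\sum_i|n_i|\leq C E_{\e'}(u)/|\ln\e'|$) into a slightly larger exponent, one obtains the stated factor ${\e'}^{(\kappa-1)/50}$. The main obstacle is the bookkeeping: one must simultaneously ensure that (a) the prefactor in front of $\int|\phi|\,e_{\e'}/|\ln\e'|$ is at most $\kappa$, (b) the Jacobian error $r(1+E_{\e'}(u))\|\phi\|_{W^{1,\infty}}$ fits inside the error term of the statement, and (c) the logarithmic corrections from $D$ and from the $-O(\ln|\ln\e'|)$ in the ball energy lower bound are all absorbed inside a single clean power ${\e'}^{(\kappa-1)/50}$. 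The numerical constant $50$ is an artefact of this optimisation and is certainly not sharp.
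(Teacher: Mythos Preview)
The paper does not give a proof of this lemma: it is quoted verbatim from \cite[Lemma 8]{Jerrard_2007} and \cite[Lemma 4.1]{Aftalion_Jerrard_RoyoLetelier_2011} and used as a black box. So there is no in-paper proof to compare against.

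Your outline is the right strategy and is essentially how the result is obtained in those references: run the vortex-ball construction at a radius \(r=\e'^{\sigma}\), use the Jacobian estimate to replace \(\int\phi\,\p_xu\wedge\p_yu\) by \(\pi\sum_i n_i\phi(a_i)\) up to an \(O(r)\) error, then trade \(\pi|n_i|\) for the local energy via the per-ball lower bound \(\pi|n_i|(1-\sigma)|\ln\e'|\leq E_{\e'}(u,B_i)\) and choose \(\sigma\) so that \(1/(1-\sigma)\leq\kappa\). That part is fine.

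There is, however, a genuine gap in your treatment of the ``otherwise'' case. You claim that when the energy hypothesis of Theorem~\ref{th:vortex_ball_construction} fails, the trivial pointwise bound \(|\p_xu\wedge\p_yu|\leq e_{\e'}(u)\) combined with the \(\e'^{(\kappa-1)/50}\)-error term already gives the inequality. It does not: the trivial bound yields \(\bigl|\int\phi\,\p_xu\wedge\p_yu\bigr|\leq\int|\phi|\,e_{\e'}(u)\), while the right-hand side of the lemma is at most \(\bigl[\kappa/|\ln\e'|+C\e'^{(\kappa-1)/50}(1+\|\phi\|_{W^{1,\infty}})\bigr]\int(|\phi|+1)e_{\e'}(u)+\text{lower order}\), and the bracket is strictly less than \(1\) for small \(\e'\) and moderate \(\|\phi\|_{W^{1,\infty}}\). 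So the large-energy case is not free. In the original proofs this issue does not arise because Jerrard's own version of the ball construction (as opposed to the Sandier--Serfaty formulation in Theorem~\ref{th:vortex_ball_construction}) does not carry an a priori upper bound on the energy; the lower bound on each ball and the Jacobian estimate are obtained unconditionally. If you insist on using Theorem~\ref{th:vortex_ball_construction} as stated, you must either revert to Jerrard's unconditional construction or argue more carefully, e.g.\ by noting that you may take \(\delta\) arbitrarily small (paying only in the admissible range for \(r\)) so that the hypothesis \(E_{\e'}(u)\leq \e'^{\delta-1}\) is essentially vacuous for the choice of \(\sigma\) you need.
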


 We use this lemma with \(U=\tilde{A}\) with, \(\kappa>1\) to be chosen later, with \(\e'=\tilde{\e}\) and with \(\kappa>1\). We obtain

\begin{equation*}
|A_2|=d_\e\left|\int_{\A} \chi_\e \xi_\e \mu_v  \right|\leq 2d_\e \kappa \int_{\A}\chi_\e \xi_\e \frac{e_{\tilde{\e}}(v)}{|\ln \tilde{\e}|}+\mathcal{E}_\e,
\end{equation*}
where
\begin{equation*}
\mathcal{E}_\e=C\tilde{\e}^{(\kappa-1)/50}(1+\|\chi_\e \xi_\e\|_{W^{1,\infty}})\bigl[\|\chi_\e\xi_\e\|_{L^\infty}+1+\int_{\supp \phi}(|\phi|+1)e_{\tilde{\e}}(v)\bigr].
\end{equation*}
We recall that \(\|\xi_\e\|_{L^\infty}+\|\nabla \xi_\e\|_{L^\infty}\leq C\) and, by construction, \(\|\chi_\e\|_{L^\infty}\leq C\) and \(\|\nabla \chi_\e\|_{L^\infty}\leq C\frac{d_\e}{a_\e}\). Since \(d_\e \leq C |\ln \e|\) we see  thanks to \eqref{**}, that  for \(\e\) sufficiently small
\begin{equation*}
\mathcal{E}_\e \leq C \e^\beta (1+|A_2|), \text{ for } \beta=(\kappa-1)/100.
\end{equation*}
Now, the choice of \(\tilde{\e}\) guarantees that 
\begin{equation*}
e_{\tilde{\e}}(v)\leq \frac12 |\nabla v|^2+\frac{\eta_\e^2}{4\e^2}(|v|^2-1)^2 \text{ in } \A.
\end{equation*}
We recall that \(\xi_\e=f_\e\eta_\e^2\) and we obtain
\begin{align*}
(1-C\e^\beta)|A_2| & \leq 2 d_\e\kappa \frac{\|f_\e\|_{L^\infty}}{|\ln \tilde{\e}|}\int_{\A} \chi_\e \left(\frac{\eta_\e^2}{2}|\nabla v|^2+\frac{\eta_\e^4}{4\e^2}(|v|^2-1)^2 \right)+C\e^\beta \\
& \leq 2d_\e \kappa \frac{\|f_\e\|_{L^\infty}}{|\ln \tilde{\e}|}A_1 +C\e^\beta.
\end{align*}
By using that \(\eta_\e=1+O((d_\e\e)^2)\), we can check that \(\|f_\e\|_{L^\infty}=\|\frac{\xi_\e}{\eta_\e}\|_{L^\infty}\leq \ln \frac{R_2}{R_1}+o_\e(1)\). We thus deduce that 
\begin{align*}
|A_2|\leq 2d_\e \frac{\ln \frac{R_2}{R_1}+o_\e(1)}{|\ln \e|+\ln (1+O_\e(\e^2))}\kappa A_1+C\e^\beta.
\end{align*}
Since we assume \(d_\e <\alpha_c |\ln \e|\) we can write that \(d_\e \leq \lambda \frac{|\ln \e|}{\ln (R_2/R_1)}\) with \(\lambda<1/2\). We choose \(\kappa\in (1,2)\) so that \(\lambda \kappa<1/2\). We find that, for \(\e\) small enough,
\begin{equation*}
|A_2|\leq \lambda'A_1+C\e^{\beta}, \text{ with } \lambda'<1.
\end{equation*}
By using that \(A_1\leq A_2\), from \eqref{eq:(4.36)} and \eqref{eq:inegA1A2B}, we obtain that \(A_1\leq C\e^{\beta}\). This shows that \(|A_2|\leq C\e^{\beta}\) and finally \(B\leq C\e^\beta\). In view of \eqref{**} we infer
\begin{equation*}
\frac12 \int_{\A}|\nabla v|^2+\frac{1}{4\tilde{\e}^2}\int_{\A}(1-|v|^2)^2\leq C\e^\beta.
\end{equation*}
Now, we recall that \(\tilde{\e}=\e(1+O((d_\e \e)^2)\). Furthermore, if \(u\) minimizes \(E_\e\) then \(|u|=\eta_\e |v|\leq 1\) and then \(|v|\leq 1/\eta_\e=1+O((d_\e \e)^2)\leq C\). Thus
\begin{align*}
\frac{1}{4\tilde{\e}^2}\int_{\A}(1-|v|^2)^2 &= \frac{1}{4\e^2}(1+O(d_\e^2))\int_{\A} (1-|v|^2)^2 \\
&=\frac{1}{4\e^2}\int_{\A} (1-|v|^2)^2+O(d_\e^2).
\end{align*} We conclude that 

\begin{equation*}\label{eq:bound_integral}
\frac12 \int_{\A}|\nabla v|^2+\frac{1}{\e^2}\int_{\A}(1-|v|^2)^2\leq C\e^\beta.
\end{equation*}

But, assuming that \(u\) minimizes \(E_\e\) in \(\I_\e\) we know that \(u\) satisfies \eqref{eq:Euler_Lagrange} and then we can show that \(\|\nabla u\|_{L^\infty(K)}\leq C/\e\) for all \(K\) compact sets included in \(\overline{B}_{R_2}\setminus \overline{B}_{R_1}\). This estimate, in the interior of the domain is classical cf.\ \cite[Step A.1]{BBH_93}. Near \(\p B_{R_2}\) since we have a Dirichlet boundary data we can follow the proof of \cite[Equation (33)]{BBH_93}.
% Near \(\p B_{R_1}\), we can follow the proof of Lemma 4.4 in \cite{Berlyand_Mironescu_2008} \remy{Ce n'est pas vrai et en fait je ne sais pas comment obtenir cette estimée jusqu'au bord. Cela fait qu'on ne peut pas faire marcher la preuve}. Indeed we can perform a reflection of \(u\), decomposed into its modulus and its phase, near \(\p B_{R_1}\) to extend it a bit across \(\p B_{R_1}\). This transforms the problem of boundary regularity into a problem of interior regularity for modified equations. 
 Since \(u=w_\e v\) and since \(|\nabla w_\e|\leq Cd_\e\) we obtain that \(\|\nabla v\|_{L^\infty}\leq \frac{C}{\e}\). This gradient estimate and \eqref{eq:bound_integral} imply that 
\begin{equation}\label{eq:unif_conv_v}
|v|\geq 1-C\e^{\frac{\beta}{2}} \text{ in every compact set } K \subset \overline{B}_{R_2}\setminus \overline{B}_{R_1}.
\end{equation}
A proof of this fact can be obtained by arguing by contradiction, cf.\ e.g.\ \cite[Step A.2]{BBH_93}. 
\medskip
Thus we have obtained that 
\begin{equation*}
 \frac{u_\e}{\eta_\e e^{id_\e\theta}} \rightarrow 1, \text{ in } \C^0_{\text{loc}}(\overline{B}_{R_2}\setminus \overline{B}_{R_1}).
 \end{equation*}
Recalling \eqref{eq:estimate_eta} we arrive at the conclusion of Theorme \ref{th:uniqueness_d_small}.

\medskip

\textbf{Remark:}  If we can show that \(\|\nabla u_\e\|_{L^\infty(\tilde{A})}\leq C/\e\) then the conclusion of Theorem \ref{th:uniqueness_d_small} can be strengthen in the following way: there exists \(\e_0>0\) such that if \(0<\e<\e_0\) and if \(d_\e< \alpha_c|\ln \e|\), then, \(u_\e =\eta_\e(r)e^{id_\e\theta}\) where \(\eta_\e\) is the minimizer of the energy \eqref{eq:energy_eta} among real-valued functions in \(H^1( (R_1,R_2))\) satisfying \(\eta(R_1)=\eta(R_2)=1\). This would be a uniqueness and symmetry result. the difficulty is to obtain the \(L^\infty\) bound on the gradient near the boundary \(\p B_{R_1}\) where \(u_\e\) satisfies only the ``semi-stiff'' boundary condition \(|u_\e|=1\) and \(u_\e \wedge \p_\nu u_\e=0\).

\section{The case \(d_\e \gg |\ln \e|\): proof of Theorem \ref{th:main_0_case_d_big}}\label{sec:d_big}
In this section we are interested in the case where the degree on the outer boundary of the annulus is much bigger than \(|\ln \e|\).

\begin{proof}(Proof of Theorem \ref{th:main_0_case_d_big})

By Item 3) of Proposition \ref{cor:upper_bound_final} we can say that
\begin{align*}
\pi d_\e |\ln \e|(1+o(1)) &\geq E_\e(u_\e)\geq \frac12 \int_A |\nabla u_\e|^2  \geq \frac12 \int_A |u_\e\wedge \nabla u_\e|^2 \geq \frac12 \int_A |j_\e|^2.
\end{align*}
This shows that \(j_\e/d_\e \rightarrow 0\) in \(L^2(A)\). Now for the extended function \(\tilde{u}_\e\) and for the extended quantities \(\tilde{j}_\e\) and \(\tilde{\mu}_\e\), this means that
\[\frac{\tilde{j}_\e}{d_\e} \rightarrow \tilde{j}:= \begin{cases} 0 & \text{ in } A \\
\frac{\vec{e_\theta}}{r} & \text{ in } \p B_{R_2}.\end{cases}\]
Thus, proceeding as in \eqref{eq:calcul_curl} we obtain that
 \[\curl \frac{\tilde{j}_\e}{d_\e}= \frac{(2\p_x u_\e\wedge \p_y u_\e)}{d_\e} \textbf{1}_{A}\rightharpoonup \curl \tilde{j}=\mathcal{H}^1_{\lfloor \p B_{R_2}} \text{ in } \mathcal{D}'(A_a).\]

%But 
%This translates into \(\mu_\e/d_\e \rightharpoonup \mathcal{H}^1_{\lfloor \p B_{R_2}} \text{ in } \mathcal{M}(\overline{B}_{R_2}\setminus \overline{B}_{R_1})\).

\end{proof}

\section*{Appendix A: Green function in an annulus}\label{sec:AppendixA}

We prove in this Appendix that we can give an explicit expression of the Green function \eqref{eq:Green} in an annulus by means of series. This was done before for the Green function in an annulus with homogeneous Dirichlet boundary conditions in \cite{Hickey_1929}.

\begin{proposition}
Let \(G\) be the Green function in \(A_a\) defined in \eqref{eq:Green}. Then for all \(x,y\in A_a\) we have \(G(x,y)=-\frac{1}{2\pi}\ln |x-y|+\varphi (x,y)\) where, in polar coordinates \(\varphi(x,y)=\tilde{\varphi}(r,\theta, \rho,\psi)\) and, for \( R_1<r,\rho<R_2\) and \(\theta,\psi\in [0,2\pi]\) we have
\begin{multline*}
\tilde{\varphi} (r,\theta,\rho,\psi) = A_0+B_0 \ln r \\
+\sum_{m=1}^{+\infty} \bigl[ (r^mA_m+r^{-m}B_m)\cos (m\theta) +(r^mC_m+r^{-m}D_m)\sin (m\theta)\bigr],
\end{multline*}
with
\begin{equation*}
\begin{array}{rccll}
A_0&=&-\frac{1}{2\pi}\log \frac{\rho}{R_1}, \quad &B_0 =&\frac{-1}{2\pi}, \\
A_m&=& \frac{1}{2\pi m}\frac{\left(\frac{R_1}{\rho} \right)^m-\left(\frac{\rho}{R_1\tilde{R}_2^2}\right)^m }{R_1^m+R_1^{-m}}\cos (m\psi), \quad
&B_m =& \frac{1}{2\pi m}\frac{\left(\frac{R_1}{\rho} \right)^m+\left(\frac{R_1\rho}{\tilde{R}_2^2}\right)^m }{R_1^{m}+\tilde{R}_2^{2m}R_1^{-m}} \tilde{R}_2^{2m} \cos (m\psi), \\
C_m&=& \frac{1}{2\pi m}\frac{\left(\frac{R_1}{\rho} \right)^m-\left(\frac{\rho}{R_1\tilde{R}_2^2}\right)^m }{R_1^m+R_1^{-m}}\sin (m\psi),\quad
&D_m=& \frac{1}{2\pi m}\frac{\left(\frac{R_1}{\rho} \right)^m+\left(\frac{R_1\rho}{\tilde{R}_2^2}\right)^m }{R_1^{m}+\tilde{R}_2^{2m}R_1^{-m}} \tilde{R}_2^{2m} \sin (m\psi),
\end{array}
\end{equation*}
where we defined \(\tilde{R}_2:=R_2+a\). In particular \(\varphi \in \C^\infty(\overline{A}_a\times A_a)\) and \(\varphi \in \C^\infty( A_a\times \overline{A}_a)\).
\end{proposition}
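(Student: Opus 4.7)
The plan is to decompose $G(x,y) = -\frac{1}{2\pi}\log|x-y| + \varphi(x,y)$ and observe that, for each $y\in A_a$, the regular part $\varphi(\cdot,y)$ is harmonic in $A_a$ and, from the boundary conditions imposed on $G$, must satisfy
\[ \varphi(\cdot,y) = \tfrac{1}{2\pi}\log|\cdot - y| \text{ on } \partial B_{R_1}, \qquad \partial_\nu \varphi(\cdot,y) = \tfrac{1}{2\pi}\partial_\nu \log|\cdot - y| \text{ on } \partial B_{\tilde R_2}, \]
where $\tilde R_2 := R_2+a$. The idea is then to expand both the unknown $\varphi$ and these boundary data in Fourier series in $\theta$ and to match modes.

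First, writing $x = re^{i\theta}$, $y = \rho e^{i\psi}$, the classical expansion
\[ -\tfrac{1}{2\pi}\log|x-y| = -\tfrac{1}{2\pi}\log\max(r,\rho) + \tfrac{1}{2\pi}\sum_{m\geq 1}\tfrac{1}{m}\Bigl(\tfrac{\min(r,\rho)}{\max(r,\rho)}\Bigr)^{\!m} \cos(m(\theta-\psi)), \]
specialized at $r = R_1 < \rho$ (and, after differentiation in $r$, at $r = \tilde R_2 > \rho$) provides the Fourier modes of the right-hand sides of the two boundary conditions. On the other hand, the general harmonic function on an annulus has the separation-of-variables representation
\[ \tilde\varphi(r,\theta,\rho,\psi) = A_0 + B_0\log r + \sum_{m\geq 1}[(A_m r^m + B_m r^{-m})\cos m\theta + (C_m r^m + D_m r^{-m})\sin m\theta]. \]
Substituting this ansatz into the two boundary conditions and equating the $\cos m\theta$, $\sin m\theta$ coefficients decouples the problem into independent linear systems: a $2\times 2$ system for $(A_0, B_0)$ from the mean mode, and, for each $m\geq 1$, a $2\times 2$ system for $(A_m, B_m)$ together with an identical one for $(C_m, D_m)$. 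Each system is invertible (no nontrivial harmonic function on the annulus can satisfy homogeneous Dirichlet on $\partial B_{R_1}$ and homogeneous Neumann on $\partial B_{\tilde R_2}$), and Cramer's rule produces the displayed coefficients.

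Finally, the regularity claim $\varphi \in C^\infty(\overline{A}_a\times A_a)$ follows from the geometric decay of the coefficients. For $y = \rho e^{i\psi}$ in any compact $K\Subset A_a$, one has $R_1/\rho \leq 1-\delta$ and $\rho/\tilde R_2 \leq 1-\delta$ uniformly, for some $\delta > 0$. Inspection of the displayed formulas shows that each of the quantities $A_m r^m$, $B_m r^{-m}$, $C_m r^m$, $D_m r^{-m}$ is bounded by $C_K\, \theta^m/m$ for some $\theta \in (0,1)$, uniformly in $r \in [R_1, \tilde R_2]$ and $(\rho,\psi) \in K$. Since $r$- and $\theta$-differentiation only introduces polynomial-in-$m$ factors, which the geometric decay absorbs, the series for $\tilde\varphi$ and all its $x$-derivatives converge absolutely and uniformly on $\overline{A}_a \times K$, yielding the claimed smoothness. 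The symmetric statement $\varphi \in C^\infty(A_a\times \overline{A}_a)$ follows either by running the same argument with $r$ and $\rho$ swapped, or directly from the symmetry $G(x,y) = G(y,x)$ established in Proposition \ref{prop:properties_Green_function}. The main technical point is precisely this uniform control up to one boundary component: as soon as the other variable is kept strictly inside the annulus, the two ratios $(R_1/\rho)^m$ and $(\rho/\tilde R_2)^m$ decay geometrically in $m$, uniformly in compact subsets, and the rest is bookkeeping.
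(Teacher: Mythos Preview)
Your proposal is correct and follows essentially the same route as the paper: write the harmonic regular part $\varphi$ in its separation-of-variables expansion, expand $\log|x-y|$ (and its radial derivative) in Fourier series to read off the Dirichlet data on $\partial B_{R_1}$ and the Neumann data on $\partial B_{\tilde R_2}$, match modes to obtain decoupled $2\times2$ linear systems for the coefficients, and deduce the boundary regularity from the geometric decay of the coefficients together with termwise differentiation. You are in fact a bit more explicit than the paper in two places: you justify invertibility of the mode-by-mode systems via a uniqueness argument, and you spell out the uniform geometric bound $C_K\,\theta^m/m$ that drives the $C^\infty(\overline{A}_a\times K)$ conclusion, whereas the paper just invokes ``theorems of differentiation of series''. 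One small point: for the joint smoothness $\varphi\in C^\infty(\overline{A}_a\times A_a)$ you also need to differentiate in the $y$-variables $(\rho,\psi)$, not only in $(r,\theta)$; this is harmless since $\rho$- and $\psi$-derivatives of the coefficients likewise introduce only polynomial-in-$m$ factors, but it is worth saying explicitly.
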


\begin{proof}
Since \(\tilde{\varphi}\) is harmonic in \(A_a\) we can write it as 
\begin{multline*}
\tilde{\varphi}(r,\theta,\rho,\psi) =A_0+B_0\ln r +\sum_{m=1}^{+\infty} \Bigl[ (r^mA_m+r^{-m}B_m)\cos (m\theta) \\
 +(r^mC_m+r^{-m}D_m)\sin (m\theta) \Bigr].
\end{multline*}
The boundary condition satisfied by \(G\) on \(\p B_{R_1}\) implies that \(\varphi(x,y)=-\frac{1}{2\pi}\ln |x-y|\) for every \(x\in \p B_{R_1}\) and every \(y \in A_a\). By using that \(\ln |z| =\text{Re} (\ln z)\) for any \(z\in \mathbb{C}\setminus \R_{-}\) we can expand
\begin{align*}
-\ln |R_1e^{i\theta}-\rho e^{i\psi}|&=-\ln \rho -\RE \ln \left(1-\frac{R_1}{\rho}e^{i(\theta-\varphi)}\right) \\
&=  -\ln \rho +\RE \sum_{m=1}^{+\infty} \frac{1}{m}\left(\frac{R_1}{\rho} \right)^m e^{im(\theta -\psi)} \\
&=-\ln \rho+\sum_{m=1}^{+\infty}  \frac{1}{m}\left(\frac{R_1}{\rho} \right)^m \cos (m(\theta-\psi)).
\end{align*}
We thus find a first relation between the coefficients \(A_m,B_m,C_m,D_m\) which can be written as
\begin{align*}
A_0+B_0\ln R_1 &=-\frac{1}{2\pi}\ln \rho \\
R_1^mA_m+R_1^{-m}B_m &= \frac{1}{2\pi m}\left(\frac{R_1}{\rho} \right)^m \cos (m\psi) \\
R_1^mC_m+R_1^{-m}D_m &= \frac{1}{2\pi m}\left(\frac{R_1}{\rho} \right)^m \sin (m\psi).
\end{align*}
Now on \(\p B_{R_2+a}\) we must have \(\p_r \tilde{\varphi}(r,\theta, \rho \psi)=-\frac{1}{2\pi}\p_r \ln |re^{i\theta}-\rho e^{i\varphi}|\). To simplify notation we set \(\tilde{R}_2=R_2+a\). We can check that 
\begin{align*}
-\frac{1}{2\pi}\p_r \ln |re^{i\theta}-\rho e^{i\varphi}| =-\frac{1}{2\pi}\left(\frac{1}{r}+\sum_{m=1}^{+\infty} \frac{\rho^m}{r^{m+1}}\cos (m(\theta-\psi)) \right).
\end{align*}
This leads to a second relation between the coefficients.
\begin{align*}
B_0&= -\frac{1}{2\pi} \\
\tilde{R}_2^{m-1} A_m-mB_m\tilde{R}_2^{-m-1} &=\frac{\rho^m}{2\pi \tilde{R}_2^{m+1}}\cos(m\psi) \\
\tilde{R}_2^{m-1} C_m-mD_m\tilde{R}_2^{-m-1} &=-\frac{\rho^m}{2\pi \tilde{R}_2^{m+1}}\sin (m\psi). 
\end{align*}
To find the coefficients we thus have to solve the linear systems
\begin{equation*}
\left\{
\begin{array}{rcll}
R_1^mA_m+R_1^{-m}B_m &=\frac{1}{2\pi m} \left(\frac{R_1}{\rho} \right)^m \cos (m\psi) \\
m\tilde{R}_2^{m-1}-m\tilde{R}_2^{-m-1} &= -\frac{1}{2\pi} \frac{\rho^m}{\tilde{R}_2^{m+1}}\cos (m\psi) 
\end{array}
\right.
\end{equation*}
\begin{equation*}
\left\{
\begin{array}{rcll}
R_1^mC_m+R_1^{-m}D_m &=\frac{1}{2\pi m} \left(\frac{R_1}{\rho} \right)^m \sin(m\psi) \\
m\tilde{R}_2^{m-1}-m\tilde{R}_2^{-m-1} &= -\frac{1}{2\pi} \frac{\rho^m}{\tilde{R}_2^{m+1}}\sin (m\psi).
\end{array}
\right.
\end{equation*}
This leads to the desired formulas for \(A_m,B_m,C_m,D_m\). We point out that other derivations of Green functions in an annulus can be found with the help of Jacobi \(\theta\) functions, see e.g.\ in \cite{fetter67,Guenther_Massignan_Fetter_2017}.

The fact that \(\varphi \in \C^\infty(\overline{A}_a\times A_a)\) and \(\varphi \in \C^\infty( A_a\times \overline{A}_a)\) follows from the theorems of differentiation of series.
\end{proof}
\section*{Appendix B: Convergence of quadratic expressions}

In this Appendix we study the convergence of quadratic expressions of measures under weak convergence, see also \cite[Lemma 5.3]{Schochet_1995} and \cite[Lemma 6.3]{Peszek_Rodiac_2025},
\begin{proposition}\label{prop:Appendixb}
Let \(\Omega \subset \R^N\) be an open set. Let \(K:\Omega \times \Omega \rightarrow \R\) be such that \(K\in \C((\Omega \times \Omega) \setminus \D)\), where \(\D:=\{(x,y)\in \R^N\times \R^N; x=y\}\), and such that \(K\in L^\infty (\Omega \times \Omega)\). Let \( (\mu_n)_n\subset \mathcal{M}(\Omega)\) be a sequence of Radon measures such that \(\mu_n \rightharpoonup \mu\) in the sense of measures. Assume furthermore that \(\mu\) is non-atomic and that \(|\mu_n|\rightharpoonup |\mu|\) in the sense of measures. Then
\begin{equation}\label{eq:conv_quadratic}
K(\cdot,\cdot)\mu_n \otimes \mu_n \rightharpoonup K(\cdot,\cdot)\mu \otimes \mu \quad \text{ in }  \M(\Omega \times \Omega).
\end{equation}
\end{proposition}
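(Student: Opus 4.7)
The strategy is to regularize the off-diagonal singularity of $K$ by a cutoff, use weak convergence of the tensor-product measures on the regularized piece, and then control the leftover mass near $\mathcal{D}$ via the non-atomicity of $\mu$. Fix $\varphi \in \C_c(\Omega \times \Omega)$; the goal is to prove
\begin{equation*}
\int K\varphi \, d\mu_n \otimes d\mu_n \;\longrightarrow\; \int K\varphi \, d\mu \otimes d\mu.
\end{equation*}

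First I would establish the tensor-product convergences $\mu_n\otimes\mu_n \rightharpoonup \mu\otimes\mu$ and $|\mu_n|\otimes|\mu_n| \rightharpoonup |\mu|\otimes|\mu|$ in $\M(\Omega\times\Omega)$. For test functions of product form $\varphi(x)\psi(y)$ with $\varphi,\psi \in \C_c(\Omega)$, this is immediate by Fubini and the assumed weak convergences. For a general $\eta \in \C_c(\Omega\times\Omega)$ with $\mathrm{supp}(\eta) \subset K_1\times K_2$ compact, Stone--Weierstrass furnishes finite sums $\eta_\varepsilon = \sum_i \varphi_i\otimes \psi_i$, with supports inside a fixed compactum of $\Omega\times\Omega$, such that $\|\eta - \eta_\varepsilon\|_\infty < \varepsilon$. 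The error is controlled by the local mass bound $\limsup_n |\mu_n|(K_i) \leq \int \tilde\eta_i \, d|\mu| < +\infty$, obtained by testing the weak convergence $|\mu_n|\rightharpoonup |\mu|$ against a bump $\tilde\eta_i \in \C_c(\Omega)$ equal to $1$ on $K_i$; this makes the approximation error uniform in $n$.

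Next, introduce $\chi_\delta \in \C(\Omega\times\Omega;[0,1])$ with $\chi_\delta \equiv 0$ on $\{\dist((x,y),\mathcal{D}) \leq \delta/2\}$ and $\chi_\delta \equiv 1$ on $\{\dist((x,y),\mathcal{D}) \geq \delta\}$. Since $K$ is continuous off $\mathcal{D}$, the function $\chi_\delta K \varphi$ lies in $\C_c(\Omega\times\Omega)$, so the first step gives
\begin{equation*}
\int \chi_\delta K\varphi \, d\mu_n\otimes d\mu_n \;\xrightarrow[n\to\infty]{}\; \int \chi_\delta K\varphi \, d\mu\otimes d\mu.
\end{equation*}
For the remainder, pick $\theta_\delta \in \C_c(\Omega\times\Omega;[0,1])$ equal to $1$ on a $\delta$-neighborhood of $\mathcal{D}\cap \mathrm{supp}(\varphi)$ and vanishing outside a $2\delta$-neighborhood. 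Then
\begin{equation*}
\Bigl|\int (1-\chi_\delta) K\varphi\, d\nu_n \otimes d\nu_n\Bigr| \leq \|K\|_\infty \|\varphi\|_\infty \int \theta_\delta \, d|\nu_n|\otimes d|\nu_n|
\end{equation*}
for $\nu_n \in \{\mu_n,\mu\}$, and the limsup as $n\to\infty$ of the right-hand side equals $\|K\|_\infty\|\varphi\|_\infty \int \theta_\delta \, d|\mu|\otimes d|\mu|$ by the first step.

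Finally, sending $\delta\to 0$, dominated convergence yields $\int\theta_\delta\, d|\mu|\otimes d|\mu| \to (|\mu|\otimes|\mu|)(\mathcal{D}\cap\mathrm{supp}(\varphi))$. Non-atomicity transfers from $\mu$ to $|\mu|$ via the Jordan decomposition: if $\mu = \mu^+ - \mu^-$ with $\mu^\pm$ mutually singular, then $\mu^\pm(\{x\}) = 0$ for every $x$, so $|\mu|(\{x\}) = 0$. Fubini then gives $(|\mu|\otimes|\mu|)(\mathcal{D}) = \int_\Omega |\mu|(\{x\})\, d|\mu|(x) = 0$. Combining the splittings for $n\to\infty$ first and then $\delta\to 0$ produces the claimed convergence. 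The main obstacle is the first step: promoting scalar weak convergence to tensor-product weak convergence requires uniform boundedness of the masses on compact subsets, and the hypothesis $|\mu_n|\rightharpoonup|\mu|$ is precisely what supplies this via the bump-function estimate.
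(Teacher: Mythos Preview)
Your proof is correct and follows the same overall architecture as the paper's: establish weak convergence of the tensor products, split via a diagonal cutoff, and kill the near-diagonal piece using that $|\mu|\otimes|\mu|$ does not charge $\mathcal D$ when $|\mu|$ is non-atomic. The difference lies in how the near-diagonal remainder is handled. The paper first extracts a subsequential weak limit $m$ of $K\,\mu_n\otimes\mu_n$, then invokes an auxiliary absolute-continuity lemma (Lemma~\ref{lem:th_measure}, after Schochet) to deduce $m\ll |\mu|\otimes|\mu|$, and concludes that $m(\mathcal D)=0$. You instead bound the remainder directly by $\|K\|_\infty\|\varphi\|_\infty\int\theta_\delta\,d|\mu_n|\otimes d|\mu_n|$, pass to the limit in $n$ using the already-established weak convergence $|\mu_n|\otimes|\mu_n|\rightharpoonup|\mu|\otimes|\mu|$, and then send $\delta\to 0$. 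This is more elementary: it avoids both the subsequence extraction and the Schochet lemma, and it yields the full-sequence convergence in one pass. The paper's route, on the other hand, identifies the structure of any possible limit measure, which is a slightly stronger intermediate statement even if not needed here.
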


\begin{proof}
By density  in \(\C_c(\Omega\times \Omega)\) of functions of the form \((x,y)\mapsto \sum_{k,l} f_k(x)g_l(y)\) for \(f_k,g_l\in \C_c(\Omega)\), we obtain that \(|\mu_n| \otimes |\mu_n| \rightharpoonup |\mu| \otimes |\mu|\) in \(\M(\Omega\times \Omega)\).  We also have that 
\( \left| K(\cdot,\cdot)\mu_n\otimes \mu_n\right| \leq \|K\|_{L^\infty}|\mu_n| \otimes |\mu_n|\). Hence we see that \(K(\cdot,\cdot)\mu_n\otimes \mu_n\) is uniformly bounded and, up to a subsequence, it converges to a measure \(m\) in the sense of measures. We can also apply Lemma \ref{lem:th_measure} below to obtain that \(m\) is absolutely continuous with respect to \(|\mu|\otimes |\mu|\).
Now we observe that, since \(K \in \C((\Omega \times \Omega) \setminus \D)\), for every \(\varphi\in \C_c(\Omega \times \Omega)\) vanishing on \(\D\) we have that
\begin{align*}
\iint_{\O\times \O}\varphi(x,y) K(x,y)d\mu_n(x)d\mu_n(y) \xrightarrow[n \to +\infty]{} \iint_{\O\times \O}\varphi(x,y) K(x,y)d\mu(x)d\mu(y).
\end{align*}
Let \(\eta \in \C^\infty_c(\R^N)\) be such that \(\eta \equiv 1\) in a neighbourhood of \(\D\). Then for all \(\varphi \in \C_c(\Omega \times \Omega)\), and for all \(\kappa>0\)
\begin{align*}
&\iint_{\O\times \O}\varphi(x,y) K(x,y)d\mu_n(x)d\mu_n(y) = \iint_{\O\times \O}\varphi(x,y) K(x,y)\eta\left(\frac{|x-y|}{\kappa}\right)d\mu_n(x)d\mu_n(y) \\
&  \quad + \iint_{\O\times \O}\varphi(x,y) K(x,y)\left[1-\eta\left(\frac{|x-y|}{\kappa}\right)\right] d\mu_n(x)d\mu_n(y) \\
& \xrightarrow[n\to +\infty]{}\iint_{\O\times \O}\varphi(x,y) K(x,y)\eta\left(\frac{|x-y|}{\kappa}\right)dm(x,y) \\
 &\quad + \iint_{\O\times \O}\varphi(x,y) K(x,y)\left[1-\eta\left(\frac{|x-y|}{\kappa}\right)\right] d\mu(x)d\mu(y)  \\
& \xrightarrow[\kappa \to 0]{}\iint_{\D}\varphi(x,y) K(x,y)dm(x,y) \\
& \quad + \iint_{\O\times \O}\varphi(x,y) K(x,y) d\mu(x)d\mu(y).
\end{align*}
In the last step we have used the dominated convergence theorem. Now we observe that since \(|\mu|\) is non-atomic then \(|\mu|\otimes |\mu|\) does not charge the diagonal \(\D\), see e.g.\ \cite{Delort_1991} p.566. Indeed , since \(|\mu|(y)=0\) for all \(y \in \Omega\), Fubini's theorem shows that 
\begin{align*}
\iint_{\Omega \times \Omega} \textbf{1}_{x=y} d|\mu|(x)d|\mu|(y) &= \int_{\Omega} d|\mu|(y)\int_{\Omega}\textbf{1}_{\{x=y\}} d |\mu|(x) \\
&=\int_{\Omega}|\mu|(y) d|\mu|(y)=0.
\end{align*}

But since \(m\) is absolutely continuous with respect to \(|\mu|\otimes |\mu|\), we also have that \(m\) does not charge \(\D\) and then
\(\iint_{\D}\varphi(x,y) K(x,y)dm(x,y)=0\). This proves that \eqref{eq:conv_quadratic} holds.
\end{proof}
In the above proof we have used the following result, whose proof can be found in \cite[Lemma 5.1]{Schochet_1995}

\begin{lemma}\label{lem:th_measure}
Let \(\Omega\) be an open set of \(\R^N\). Let \( (\nu_n)_n\subset \M(\Omega)\) and \((\sigma_n)_n\subset \M(\O)^+\) be such that there exists \(C>0\) with \(|\nu_n|\leq C \sigma_n\) for all \(n\in \mathbb{N}\). Assume that \(\nu_n \rightharpoonup \nu\) in the sense of measures and \(\sigma_n \rightharpoonup \sigma\) in the sense of measures. Then there exists a \(\sigma\)-measurable function \(f:\Omega\rightarrow \R\) such that \(\nu=f \sigma\) and \(|f|\leq C\) for \(\sigma\)-a.e. point in \(\O\).
\end{lemma}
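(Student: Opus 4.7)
The plan is to first pass the pointwise (in the sense of measures) bound $|\nu_n|\le C\sigma_n$ to the limit and obtain $|\nu|\le C\sigma$, and then invoke the Radon--Nikodym theorem. The Radon--Nikodym derivative is then automatically bounded by $C$ in absolute value.

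The first step is the one which requires care, since the total variation is only lower semi-continuous under weak convergence. The way I would carry it out is via the variational characterization
\[
\int_\Omega \varphi\,\dd|\nu_n| \;=\; \sup\left\{\int_\Omega \psi\,\dd\nu_n \;:\; \psi \in C_c(\Omega),\ |\psi|\le \varphi\right\}
\]
valid for every non-negative $\varphi \in C_c(\Omega)$. For any admissible $\psi$, the hypothesis $|\nu_n|\le C\sigma_n$ yields
\[
\left|\int_\Omega \psi\,\dd\nu_n\right| \;\le\; \int_\Omega |\psi|\,\dd|\nu_n| \;\le\; C\int_\Omega \varphi\,\dd\sigma_n.
\]
Since $\psi,\varphi\in C_c(\Omega)$, the weak convergences $\nu_n \rightharpoonup \nu$ and $\sigma_n \rightharpoonup \sigma$ allow me to pass to the limit in both sides, obtaining $\left|\int_\Omega \psi\,\dd\nu\right|\le C\int_\Omega \varphi\,\dd\sigma$. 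Taking the supremum over admissible $\psi$ on the left gives
\[
\int_\Omega \varphi\,\dd|\nu| \;\le\; C\int_\Omega \varphi\,\dd\sigma \qquad \text{for every non-negative } \varphi\in C_c(\Omega),
\]
and by Riesz-type approximation this upgrades to the measure inequality $|\nu|\le C\sigma$ on every Borel set.

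For the second step, the inequality $|\nu|\le C\sigma$ implies in particular that $\nu \ll \sigma$. The Radon--Nikodym theorem then provides a $\sigma$-measurable function $f:\Omega\to\R$ such that $\nu=f\sigma$. Since the total variation of a measure with density $f$ with respect to a non-negative measure $\sigma$ is $|f|\sigma$, the inequality $|f|\sigma=|\nu|\le C\sigma$ forces $|f|\le C$ for $\sigma$-almost every point of $\Omega$, which concludes the proof.

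The main (and really only) obstacle is step one, i.e., justifying that the inequality $|\nu_n|\le C\sigma_n$ survives the passage to the limit; this is settled by the variational formula above, which is the reason why one does not lose a factor depending on the geometry of $\Omega$. No finer properties of $\Omega$ are used, so the statement holds in the full generality announced.
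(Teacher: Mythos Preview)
Your proof is correct. The paper does not actually give its own proof of this lemma; it simply cites \cite[Lemma 5.1]{Schochet_1995}. Your argument---passing the inequality \(|\nu_n|\le C\sigma_n\) to the limit via the variational characterization of the total variation and then invoking the Radon--Nikodym theorem---is the standard way to establish this result and is essentially what one finds in Schochet's paper, so there is nothing substantive to compare beyond noting that you have supplied a self-contained proof where the paper defers to a reference.
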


\bibliographystyle{abbrv}
\bibliography{bib}
\end{document}